\def\a{\alpha}
\def\b{\beta}
\def\g{\gamma}
\newcommand{\s}{\ensuremath{\mathbb{S}}}
\newcommand{\bbox}{\ \hfill\rule[-1mm]{2mm}{3.2mm}}
\newtheorem {theorem} {Theorem}%[section]
\newtheorem {proposition} [theorem]{Proposition}
\newtheorem {lemma}  [theorem]{Lemma}
\newcommand{\R}{\mathbb{R}}
\newcommand{\sss}{\ensuremath{\mathbb{S}}}
\newcommand{\D}{\ensuremath{\mathbb{D}}}
\begin{document}

\title[Riccati Quadratic Polynomial Differential System]
{Phase Portraits of the Riccati Quadratic Polynomial  Differential Systems}

\author[ J. Llibre, B.D. Lopes and P.R. da Silva]
{Jaume Llibre$^1$, Bruno D. Lopes $^2$ and Paulo R. da Silva$^3$}

\address{$^1$ Departament de Matem\`{a}tiques,
Universitat Aut\`{o}noma de Barcelona, 08193 Bellaterra, Barcelona,
Catalonia, Spain.}

\address{$^2$ IMECC--UNICAMP, CEP 13081--970, Campinas,
S\~ao Paulo, Brazil.}

\address{$^3$  Departamento de Matem{\'a}tica -- IBILCE--UNESP,
Rua C. Colombo, 2265, CEP 15054--000 S. J. Rio Preto, S\~ao Paulo,
Brazil.}

\email{jllibre@mat.uab.cat}

\email{brunodomicianolopes@gmail.com}

\email{paulo.r.silva@unesp.br}

\thanks{$^*$ The first author is partially supported by the Ministerio de Econom\'ia,
Industria y Competitividad, Agencia Estatal de Investigaci\'on grant
MTM2016-77278-P (FEDER), the Ag\`encia de Gesti\'o d'Ajuts
Universitaris i de Recerca grant 2017SGR1617, and the H2020 European
Research Council grant MSCA-RISE-2017-777911. The second author is supported by PNPD/CAPES-IMECC/UNICAMP.
The  third author was partially supported by CAPES, CNPq and FAPESP.  All the authors are
supported by FP7-PEOPLE-2012-IRSES-316338.}

\subjclass[2010]{37G15, 37D45.}

\keywords{Riccati system, Poincar\'e compactification, dynamics at infinity}
\date{}
\dedicatory{}

\maketitle
\begin{abstract}
In this paper we characterize the phase portrait of the Riccati quadratic polynomial differential systems
$$\dot{x}= \a_2(x),\quad\dot{y} = ky^2+\b_1(x) y + \g_2(x), $$
 with $(x,y)\in\R^2$,  $\g_2(x)$ non-zero (otherwise the system is a Bernoulli differential system),
 $k\neq0$ (otherwise the system is a Lienard differential system),
$\b_ 1(x)$ a polynomial of degree at most $1$, $\a_ 2(x)$ and $\g_ 2(x)$ polynomials of degree at most 2, and
the maximum of the degrees of $ \a_2(x)$  and $k y^2+\b_1(x) y + \g_2(x)$ is 2.
We give the complete description of their phase portraits in the Poincar\'{e} disk  (i.e. in the
compactification of $\R^2$ adding the circle $\mathbb {S}^1$ of the
infinity) modulo topological equivalence.\end{abstract}

\section{Introduction and statement of the main results}   \label{s1}

Numerous problems of applied mathematics are modeled by quadratic polynomial differential systems, see for instance \cite{MTMN}.
Excluding linear systems, such systems are the ones with the lowest degree of complexity,
and the large bibliography on the subject  proves its relevance.
We refer for example to the books of Ye Yanqian et al. \cite{Ye}, Reyn \cite{Re},
and Artes, Llibre, Schlomiuk,  Vulpe \cite{Ar}, and the surveys of Coppel \cite{Co}, and Chicone and
Jinghuang \cite{CJ}  are excellent introductory readings to the quadratic polynomial differential systems.\\

In this paper we characterize the phase portraits of the Riccati quadratic differential systems
\begin{equation}\dot{x}= \a_2(x),\quad\dot{y} = ky^2+\b_1(x) y + \g_2(x), \label{eq1}\end{equation}
 with $(x,y)\in\R^2$,  $\g_2(x)$ non-zero (otherwise the system is a Bernoulli differential system),
 $k\neq0$ (otherwise the system is a Lienard differential system),
$\b_ 1(x)$ a polynomial of degree at most $1$, $\a_ 2(x)$ and $\g_ 2(x)$ polynomials of degree at most 2, and
the maximum of the degrees of $ \a_2(x)$  and $k y^2+\b_1(x) y + \g_2(x)$ is 2. In \eqref{eq1}  the dot denotes
derivative with respect to the time.

\begin{proposition}\label{p0} A Riccati quadratic differential system \eqref{eq1}
is topologically equivalent to  one of the following systems:
\[\begin{array}{clll}
(i)\quad &\dot{x}=x(x+1),& \dot{y}&=y^2+(ax+b) y + cx^2+ dx + e;\\
(ii)\quad &\dot{x}=x^2,&\dot{y}&=y^2+(ax+b) y + cx^2+ dx + e;\\
(iii)\quad &\dot{x}=x,&\dot{y}&=y^2+(ax+b) y + cx^2+ dx + e;\\
(iv)\quad &\dot{x}=1, &\dot{y}&=y^2+(ax+b) y + cx^2+ dx + e;\\
(v)\quad &\dot{x}=x^2+1, &\dot{y}&=y^2+(ax+b) y + cx^2+ dx + e.
\end{array}
\]
with $c^2+d^2+e^2\neq 0$ in all these systems.

\end{proposition}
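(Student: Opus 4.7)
The key observation is that the first equation $\dot x=\a_2(x)$ is decoupled from $y$, so the classification reduces to normalizing $\a_2(x)$ under affine changes $x\mapsto \lambda x+\mu$ (with $\lambda\neq 0$) combined with a time rescaling $t\mapsto \nu t$ (allowing $\nu<0$ to reverse time, which is admissible for topological equivalence). First I would record the six degrees of freedom available: the two parameters of an affine transformation in $x$, an affine transformation in $y$, and a rescaling of time; and I would verify at the start that, after rescaling $y\mapsto ky$, the coefficient of $y^2$ in the second equation can be normalized to $1$, so the $y$-equation takes the generic form $\dot y=y^2+(ax+b)y+cx^2+dx+e$. The condition $\g_2\not\equiv 0$ in the hypothesis then translates into $c^2+d^2+e^2\neq 0$ in each normal form.

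Next I would split according to $\deg\a_2$. If $\deg\a_2=2$, the discriminant of $\a_2$ determines three subcases: two distinct real roots (normalize by translation to roots $0,-1$ and then rescale time to make the leading coefficient $1$, obtaining $\dot x=x(x+1)$, case (i)); a double real root (translate to origin and rescale time to obtain $\dot x=x^2$, case (ii)); or a pair of complex conjugate roots (translate to eliminate the linear term and rescale $x$ and time to obtain $\dot x=x^2+1$, case (v)). If $\deg\a_2=1$, a translation kills the constant term and a time rescaling normalizes the slope, giving $\dot x=x$, case (iii). If $\deg\a_2=0$ (and nonzero, since otherwise the planar system collapses to a one-parameter family of scalar Riccati equations and the hypothesis that the maximum degree is $2$ is realized through the $y$-equation alone; this degenerate sub-case is excluded from the list), a time rescaling yields $\dot x=1$, case (iv).

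The bookkeeping step is to check that in each of the five cases the ancillary transformations used to normalize $\a_2$ can be absorbed into the general coefficients $(a,b,c,d,e)$ of the second equation without loss of generality. Concretely, after the $x$-translation $x\mapsto x+h$, the polynomials $\b_1(x)$ and $\g_2(x)$ remain of degree at most $1$ and $2$ respectively; after the $x$-scaling $x\mapsto\lambda x$ and time rescaling $t\mapsto\nu t$, the combined effect on $y^2+(ax+b)y+cx^2+dx+e$ can be restored to the same form (with $y^2$ coefficient equal to $1$) by an additional rescaling $y\mapsto\mu y$ with an appropriate $\mu$ depending on $\nu$. This is a routine verification for each of (i)--(v), and I would do it explicitly only for, say, case (ii), leaving the others analogous.

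The main obstacle, and really the only non-mechanical point, is making sure that the normalizations in $x$ and $t$ are compatible with keeping the $y^2$ coefficient equal to $1$ while preserving the freedom of the five parameters $a,b,c,d,e$; this is precisely where the $y$-scaling is used, and the argument relies on $k\neq 0$. Once this compatibility is established, the exhaustive classification of $\a_2(x)$ up to affine equivalence immediately gives the five topological equivalence classes listed.
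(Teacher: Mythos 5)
Your proposal is correct and follows essentially the same route as the paper: classify $\a_2(x)$ by its degree and root structure into the five affine normal forms (two distinct real roots, a double root, degree one, constant, complex conjugate roots), normalize by a translation/scaling in $x$ together with a time rescaling, and absorb the effect on the second equation into the free coefficients $a,b,c,d,e$ after rescaling $y$ to set the $y^2$ coefficient to $1$ (using $k\neq 0$). The paper's proof is exactly this, stated via the explicit changes of coordinates for each case.
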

We note that the Riccati systems have no periodic orbits. In fact,
the equilibrium points of  systems (i), (ii) and (iii) are on invariant straight lines and
 systems (iv) and (v) do not have equilibrium points, and consequently they do not have limit cycles, because
 it is well known that  a periodic orbit in the plane must surrounds at least one equilibrium point.

The objective of this work is to classify the phase portraits of the Riccati quadratic polynomial
 differential systems \eqref{eq1} in the Poincar\'{e} disk modulo topological equivalence.
As any polynomial differential system, system \eqref{eq1}
can be extended to an analytic system on a closed disk of radius
one, whose interior is diffeomorphic to $\R^2$ and its  boundary,
the circle $\s^1,$ plays the role of the infinity.
This closed disk is denoted by $\D^2$ and called the
\emph{Poincar\'{e} disk}, because the technique for doing such an
extension is precisely the {\it Poincar\'{e} compactification} for a
polynomial differential system  in $\R^2$, which is described in
details in chapter 5 of \cite{DLA}.  In this paper we shall use the notation of that chapter.
By using this compactification technique the dynamics of
system \eqref{eq1} in a neighborhood of the  infinity can be studied and we have the
following result.

\begin{theorem}\label{mainteo} The phase portraits of the Riccati  system \eqref{eq1}
in the Poincar\'{e} disk are topologically equivalent to one of the 74 phase portraits
presented in Figures $1$, $2$ and $3$. The phase portraits of the   systems of Proposition 1
 are provided in Tables $1$, $2$, $3$, $4$ and $5$ where
\begin{equation}\begin{array}{lll}&\Delta_{F_1} =b^2-4 e,  &\Delta_{F_2} =(b-a)^2-4  (c-d+e),\\
&\Delta_{I_1}= (a-1)^2-4 c, &\Delta_{I_2}= a^2-4  c.\end{array}\label{deltas}\end{equation}
\end{theorem}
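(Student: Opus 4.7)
The plan is to exploit Proposition~\ref{p0} and reduce the problem to analyzing the five normal forms (i)--(v) one at a time, combining local analysis at all finite and infinite singular points with the global constraints provided by the invariant vertical lines $\alpha_2(x)=0$, and then applying the Markus--Neumann--Peixoto theorem (see chapter~1 of \cite{DLA}) which says that the phase portrait on $\D^2$ is determined up to topological equivalence by its separatrix skeleton.

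First, I would describe all finite equilibria. For families (i), (ii), (iii) the variable $x$ is a Darboux coordinate: the lines $x=0$ (and, in case (i), $x=-1$) are invariant, so every finite equilibrium lies on one of these lines. Restricting $\dot y = y^2+\beta_1(x)y+\gamma_2(x)$ to $x=0$ gives a quadratic in $y$ whose discriminant is $\Delta_{F_1}=b^2-4e$; restricting in case (i) to $x=-1$ gives discriminant $\Delta_{F_2}=(b-a)^2-4(c-d+e)$. Hence the sign of $\Delta_{F_1}$ and $\Delta_{F_2}$ partitions the parameter space according to the number of equilibria on each invariant line, and the transition cases $\Delta_{F_i}=0$ correspond to saddle--node collisions. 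For families (iv) and (v) there are no finite equilibria, as already observed in the paper, and the global dynamics are determined entirely by behavior at infinity together with the invariance of $x=\text{const}$ curves.

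Second, I would carry out the Poincar\'e compactification of each family following chapter~5 of \cite{DLA}. In the chart $U_1$ (near the vertical direction at infinity) the relevant singular points of the compactified vector field are the zeros of a polynomial in the transverse variable whose discriminants, after a short computation, turn out to be exactly $\Delta_{I_1}=(a-1)^2-4c$ for the families whose $\alpha_2$ has a nontrivial linear term (essentially (i) and (iii)), and $\Delta_{I_2}=a^2-4c$ for the remaining ones. Each infinite singularity is then classified as hyperbolic, semi-hyperbolic, nilpotent or linearly zero by reading off the eigenvalues of the Jacobian; whenever the normal form theorems for semi-hyperbolic and nilpotent equilibria in chapter~2 of \cite{DLA} do not apply, I would resort to a directional blow-up. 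The chart $U_2$ would be used to treat the two endpoints of the vertical axis at infinity, and compatibility with $U_1$ would be checked as usual.

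Third, knowing all local separatrix structures together with the invariance of the lines $\alpha_2(x)=0$ (for (i)--(iii)) or the absence of finite equilibria (for (iv), (v)), I would assemble the separatrix skeleton. Because the vector field is a Riccati equation in $y$ on each vertical line, solutions escape to $\pm\infty$ in $y$ in a monotone and well-controlled fashion between consecutive equilibria on the same invariant line; this pins down the $\alpha$- and $\omega$-limit sets of all separatrices without any further integration. Case analysis over the signs of $\Delta_{F_1},\Delta_{F_2},\Delta_{I_1},\Delta_{I_2}$ and of the leading coefficients $k=1$, $c$ produces the finite list of skeletons, which Tables~1--5 index and Figures~1--3 draw.

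The main obstacle is \emph{not} any individual local computation but the bookkeeping: one has to enumerate up to five discriminant/coefficient signs per family, eliminate topologically redundant cases by exhibiting explicit homeomorphisms of $\D^2$, and handle the codimension-one and codimension-two degeneracies where one or two of the $\Delta$'s vanish (these include the nilpotent and linearly zero infinite singularities, for which directional blow-ups are needed to resolve the sector decomposition). Performing this case analysis carefully and verifying that exactly $74$ inequivalent skeletons remain after identifications is the content of the proof.
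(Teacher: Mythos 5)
Your overall architecture (reduce via Proposition~\ref{p0} to the five normal forms, classify finite equilibria by $\Delta_{F_1},\Delta_{F_2}$ and infinite ones by $\Delta_{I_1},\Delta_{I_2}$ through the Poincar\'e compactification, then assemble separatrix skeletons and invoke the Markus--Neumann--Peixoto theorem) matches the paper up to the point where the real difficulty begins. The genuine gap is in your third step, specifically the claim that the Riccati structure on vertical lines ``pins down the $\alpha$- and $\omega$-limit sets of all separatrices without any further integration.'' It does not. Monotone escape in $y$ between consecutive equilibria on an invariant line $x=\mathrm{const}$ controls the flow \emph{on} that line, but it does not tell you to which infinite singular point a separatrix leaving a finite saddle or node actually tends. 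This is not a hypothetical worry: in the paper the single sign-cell $\Delta_{I_1}>0,\ \Delta_{F_1}>0,\ \Delta_{F_2}>0$ of system (i) contains \emph{five} topologically distinct phase portraits $P1,\dots,P5$, so the signs of the discriminants and leading coefficients cannot suffice to enumerate the skeletons, contrary to what your case analysis presupposes.

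The missing ingredient is the pair of contact-point lemmas for quadratic systems (Chapter 11 of \cite{Ye}, stated as Lemmas~\ref{L1} and~\ref{L2} in the appendix): a straight line through a finite singular point and a diametrically opposed pair of infinite singular points is either invariant or has exactly that finite point as its unique contact point, with the flow crossing in opposite directions on the two half-lines. The paper applies this to the lines $r_1$ (through $v_1$, $p_1$, $u_1$) and $r_2$ (through $v_1$, $q_2$, $u_1$) to force the crossing directions, and then, in the remaining ambiguous subcase, compares the slope of the segment joining $p_1$ and $q_2$ with the slope determining $u_2$ at infinity (the inequality $-a-\sqrt{\Delta_{F_1}}-\sqrt{\Delta_{F_2}} \lessgtr 1-a-\sqrt{\Delta_{I_1}}$) to separate $P3$, $P4$ and $P5$. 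Without some substitute for this quantitative step your enumeration would undercount (or fail to justify) the list of $74$ portraits. Two smaller points: your assignment of $\Delta_{I_1}$ to families ``(i) and (iii)'' is off --- the computation gives $\Delta_{I_1}$ for (i), (ii), (v) and $\Delta_{I_2}$ for (iii), (iv); and a complete proof must also \emph{realize} each of the $74$ skeletons by explicit parameter values (or a continuity argument for the borderline ones, as the paper does for $P_8$), which your proposal omits.
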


Three papers on generalizations of Riccati differential equations can be found in \cite{FP, LOV, Wu}.

This paper is organized as follows. In section \ref{s2} we prove Proposition \ref{p0},
and study the finite equilibria. In section \ref{s3} we study the  infinite
equilibria.  Finally in section \ref{s4}  we prove Theorem \ref{mainteo}.
\newpage
\begin{table}[!htb]
    \begin{center}
        \begin{tabular}{|c|c|c|}
            \hline
            \small{Phase Portraits of systems (i)} & \small{conditions}  \\
            \hline
            \small {$ P1, P2, P3, P4, P5  $}&   \small {$ \Delta_{I_1}>0,  \Delta_{F_1}>0,  \Delta_{F_2}>0 $}\\
            \hline
                \small {$P6, P7, P8, P9 $}& \small {$ \Delta_{I_1}>0,  \Delta_{F_1}>0,  \Delta_{F_2}=0$}\\
            \hline
            \small {    $P10$}&     \small {$ \Delta_{I_1}>0,   \Delta_{F_1}>0,  \Delta_{F_2}<0$}\\
            \hline
                \small {$P11, P12, P13, P14$}&  \small { $ \Delta_{I_1}>0,  \Delta_{F_1}=0,  \Delta_{F_2}>0$}\\
            \hline
                \small {$P15, P16 $}&   \small { $ \Delta_{I_1}>0,  \Delta_{F_1}=0,  \Delta_{F_2}=0 $}\\
            \hline
            \small {    $P17$}& \small {$\Delta_{I_1}>0,  \Delta_{F_1}=0,  \Delta_{F_2}<0$}\\
            \hline
                \small {$P18$}& \small{$ \Delta_{I_1}>0,  \Delta_{F_1}<0,  \Delta_{F_2}>0$}\\
            \hline
                \small {$ P19$}&    \small { $ \Delta_{I_1}>0,  \Delta_{F_1}<0,  \Delta_{F_2}=0 $}\\
            \hline
                \small {$ P20 $}&   \small {$ \Delta_{I_1}>0,  \Delta_{F_1}<0,  \Delta_{F_2}<0 $}\\
            \hline
            \small {    $P21, P22, P23 $}&  \small {$ \Delta_{I_1}=0,  \Delta_{F_1}>0,  \Delta_{F_2}>0 $}\\
            \hline
                \small {$P24, P25$}&    \small {$ \Delta_{I_1}=0,  \Delta_{F_1}>0,  \Delta_{F_2}=0$}\\
            \hline
            \small {    $P26 $}&    \small {$ \Delta_{I_1}=0,  \Delta_{F_1}>0,  \Delta_{F_2}<0$}\\
            \hline
            \small {    $P27 $}&    \small { $\Delta_{I_1}=0,  \Delta_{F_1}=0,  \Delta_{F_2}>0$}\\
            \hline
                \small {$P28 $}&    \small { $ \Delta_{I_1}=0,  \Delta_{F_1}=0,  \Delta_{F_2}=0 $}\\
            \hline
            \small {    $P29$}&\small {$ \Delta_{I_1}=0,  \Delta_{F_1}=0,  \Delta_{F_2}<0$}\\
            \hline
                \small {$P30 $}&    \small {$\Delta_{I_1}=0,  \Delta_{F_1}<0,  \Delta_{F_2}>0$}\\
            \hline
            \small {    $P31$}&     \small {$ \Delta_{I_1}=0,  \Delta_{F_1}<0,  \Delta_{F_2}=0 $}\\
            \hline
            \small {    $P32 $}&    \small { $ \Delta_{I_1}=0,  \Delta_{F_1}<0,  \Delta_{F_2}<0 $}\\
            \hline
                \small {$P33 $}&    \small {$ \Delta_{I_1}<0,  \Delta_{F_1}>0,  \Delta_{F_2}>0 $}\\
            \hline
            \small {    $P34 $}&    \small {$ \Delta_{I_1}<0,  \Delta_{F_1}>0,  \Delta_{F_2}=0$}\\
            \hline
            \small {    $P35$}& \small { $ \Delta_{I_1}<0,  \Delta_{F_1}>0,  \Delta_{F_2}<0$}\\
            \hline
                \small {$P36 $}&    \small {  $ \Delta_{I_1}<0,  \Delta_{F_1}=0,  \Delta_{F_2}>0$}\\
            \hline
            \small {    $P37$}& \small { $ \Delta_{I_1}<0,  \Delta_{F_1}=0,  \Delta_{F_2}=0 $}\\
            \hline
            \small {    $P38$}& \small {$\Delta_{I_1}<0,  \Delta_{F_2}=0,  \Delta_{I_1}<0$}\\
            \hline
                \small {$P39$}& \small { $ \Delta_{I_1}<0,  \Delta_{F_1}<0,  \Delta_{F_2}>0$}\\
            \hline
                \small {$P40$}&     \small {$ \Delta_{I_1}<0,  \Delta_{F_1}<0,  \Delta_{F_2}=0$}\\
            \hline
                \small {$P41$}& \small { $\Delta_{I_1}<0,  \Delta_{F_1}<0,  \Delta_{F_2}<0$}\\
            \hline
        \end{tabular}
    \end{center}
    \caption{\small The phase portraits of systems (i). }
    \label{t6}
\end{table}

\begin{table}[!htb]
    \begin{center}
        \begin{tabular}{|c|c|c|}
            \hline
            \small{Phase Portraits of systems (ii)} & \small{conditions}  \\
            \hline
            \small{$P42, P43, P44 $}&   \small{ $  \Delta_{I_1}>0,  \Delta_{F_1}>0$}\\
            \hline
                \small{$P45, P46, P47 $}&   \small{ $ \Delta_{I_1}>0,  \Delta_{F_1}=0$}\\
            \hline
                \small{$P48$}&      \small{$ \Delta_{I_1}>0,  \Delta_{F_1}<0$}\\
            \hline
            \small{ $P49, P50, P51  $}& \small{ $  \Delta_{I_1}=0,  \Delta_{F_1}>0$}\\
            \hline
                \small{$P52, P53, P54 $}&   \small{$ \Delta_{I_1}=0,  \Delta_{F_1}=0$}\\
            \hline
                \small{$P55$}&  \small{  $ \Delta_{I_1}=0,  \Delta_{F_1}<0$}\\
            \hline
            \small{ $P56 $}&    \small{$  \Delta_{I_1}<0,  \Delta_{F_1}>0$}\\
            \hline
            \small{ $P57, P58 $}&   \small{ $ \Delta_{I_1}<0,  \Delta_{F_1}=0$}\\
            \hline
            \small{ $P41$}&     \small{$ \Delta_{I_1}<0,  \Delta_{F_1}<0$}\\
            \hline
        \end{tabular}
    \end{center}
    \caption{\small The phase portraits of systems (ii).}
    \label{t2}
\end{table}

\begin{table}[!htb]
    \begin{center}
        \begin{tabular}{|c|c|c|}
            \hline
            \small{Phase Portraits of systems (iii)} & \small{conditions}  \\
            \hline
        \small{ $ P59, P60, P61 $}&     \small{$  \Delta_{I_2}>0,  \Delta_{F_1}>0$}\\
            \hline
            \small{ $P62, P63, P64 $}&  \small{ $ \Delta_{I_2}>0,  \Delta_{F_1}=0$}\\
            \hline
            \small{ $P65$}& \small{  $ \Delta_{I_2}>0,  \Delta_{F_1}<0$}\\
            \hline
                \small{$P66, P67 $}&    \small{ $  \Delta_{I_2}=0,  \Delta_{F_1}>0$}\\
            \hline
            \small{ $P68, P69$}&    \small{$ \Delta_{I_2}=0,  \Delta_{F_1}=0$}\\
            \hline
                \small{$P32 $}&     \small{$ \Delta_{I_2}=0,  \Delta_{F_1}<0$}\\
            \hline
            \small{ $P35$}&     \small{$  \Delta_{I_2}<0,  \Delta_{F_1}>0$}\\
            \hline
            \small{ $P38$}&     \small{$ \Delta_{I_2}<0,  \Delta_{F_1}=0$}\\
            \hline
                \small{$P41$}&      \small{$ \Delta_{I_2}<0,  \Delta_{F_1}<0$}\\

            \hline
        \end{tabular}
    \end{center}
    \caption{\small The phase portraits of systems (iii).}
    \label{t3}
\end{table}

\begin{table}[!htb]
    \begin{center}
        \begin{tabular}{|c|c|c|}
            \hline
        \small  {Phase Portrait  of systems (iv) } & \small {conditions}  \\
            \hline
                \small  {$P70, P71 $} & \small  { $  \Delta_{I_2}>0$}\\
            \hline
            \small  {   $P72, P73, P74 $} & \small  { $   \Delta_{I_2}=0$} \\
            \hline
                \small  {$P41$} &   \small  { $  \Delta_{I_2}<0$ }\\
            \hline
        \end{tabular}
    \end{center}
    \caption{\small The phase portraits of systems (iv).}
    \label{t4}
\end{table}

\begin{table}[!htb]
    \begin{center}
        \begin{tabular}{|c|c|c|}
            \hline
            \small{Phase Portraits of systems (v)} & \small{conditions}  \\
    \hline
            \small  {$P70, P71 $}&  \small  {$  \Delta_{I_1}>0$}\\
            \hline
            \small  {   $P72, P73, P74$}&   \small  { $  \Delta_{I_1}=0$}\\
            \hline
            \small  {   $P41$}&     \small  {$  \Delta_{I_1}<0$}\\
            \hline
        \end{tabular}
    \end{center}
    \caption{\small The phase portraits of systems (v).}
    \label{t5}
\end{table}

\newpage

\begin{figure}
    \begin{minipage}[t]{2.7cm}\psfrag{a}{$e$}\centering\includegraphics[scale=.31]{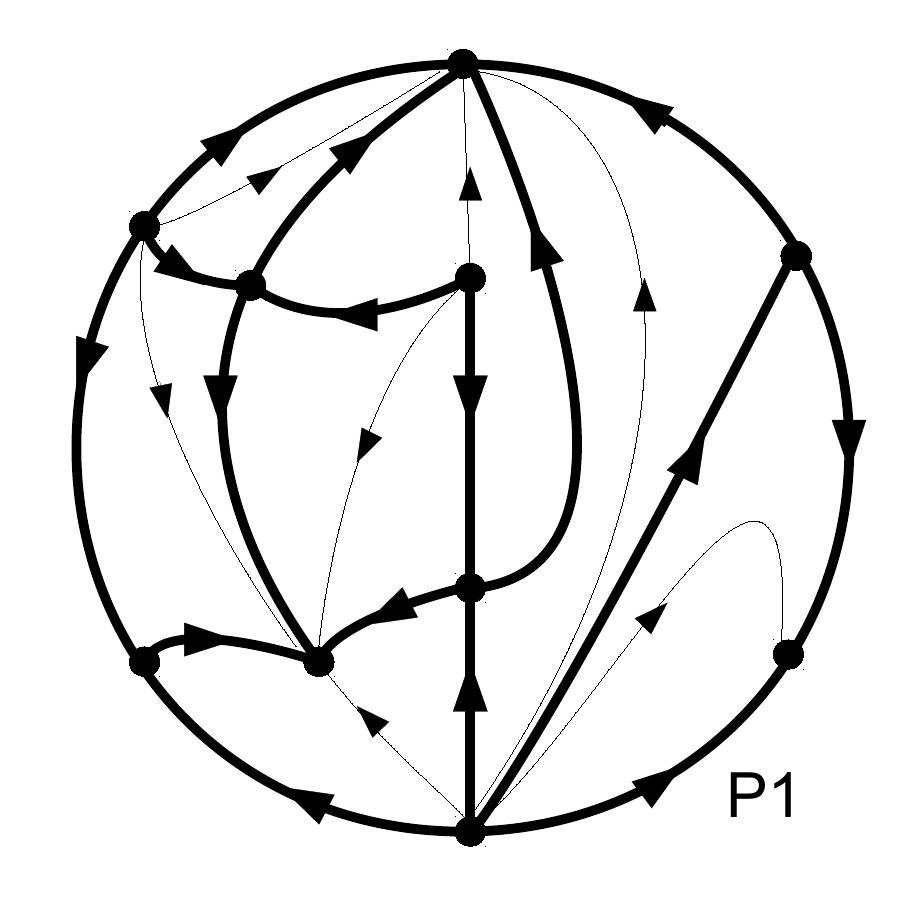}\end{minipage}
    \begin{minipage}[t]{2.7cm}\psfrag{b}{$b$}\centering\includegraphics[scale=.31]{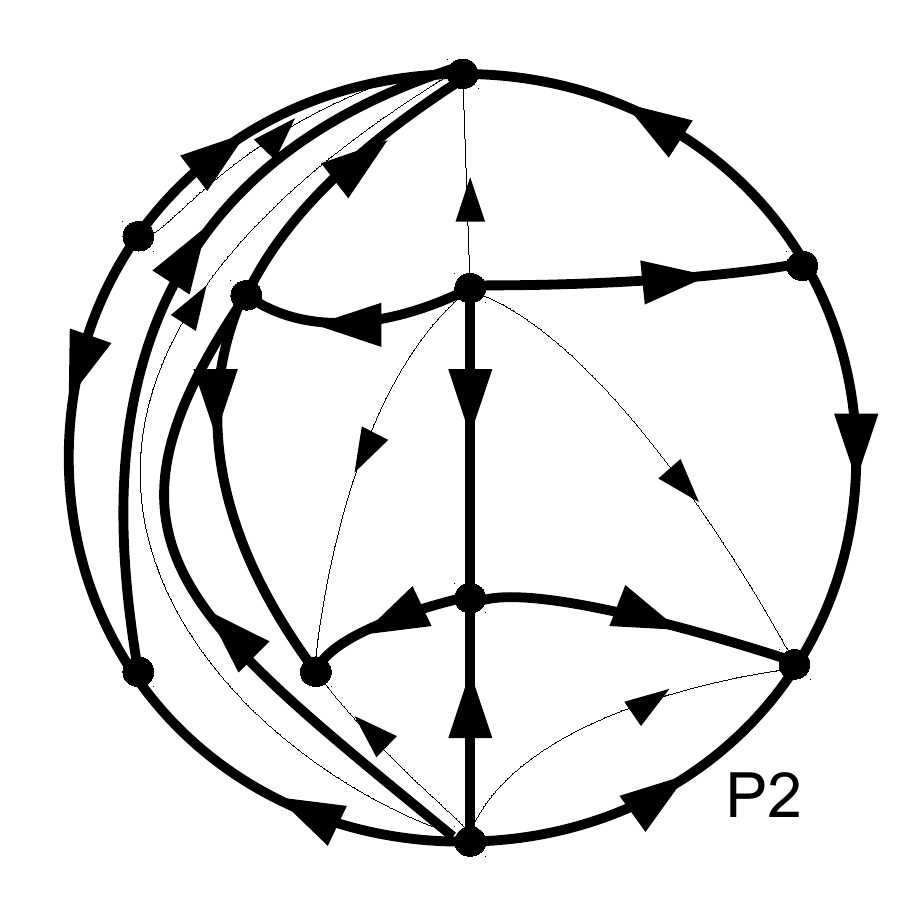}\end{minipage}
    \begin{minipage}[t]{2.7cm}\psfrag{c}{$c$}\centering\includegraphics[scale=.31]{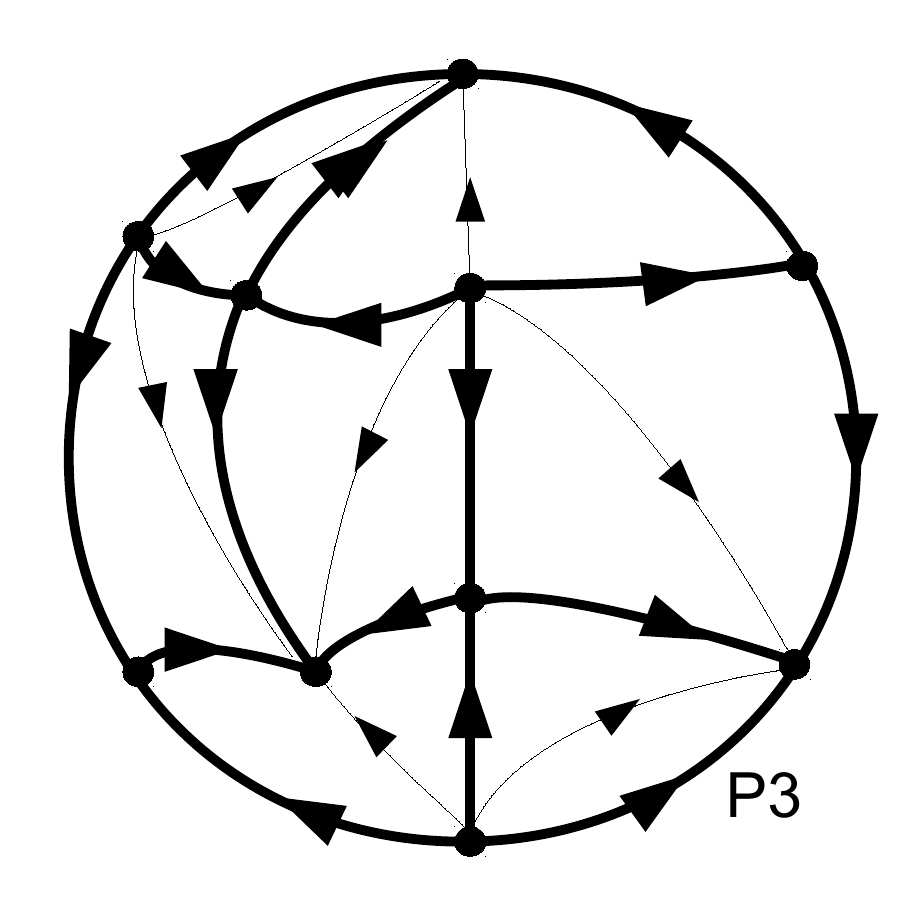} \end{minipage}
    \begin{minipage}[t]{2.7cm}\psfrag{d}{$d$}\centering\includegraphics[scale=.31]{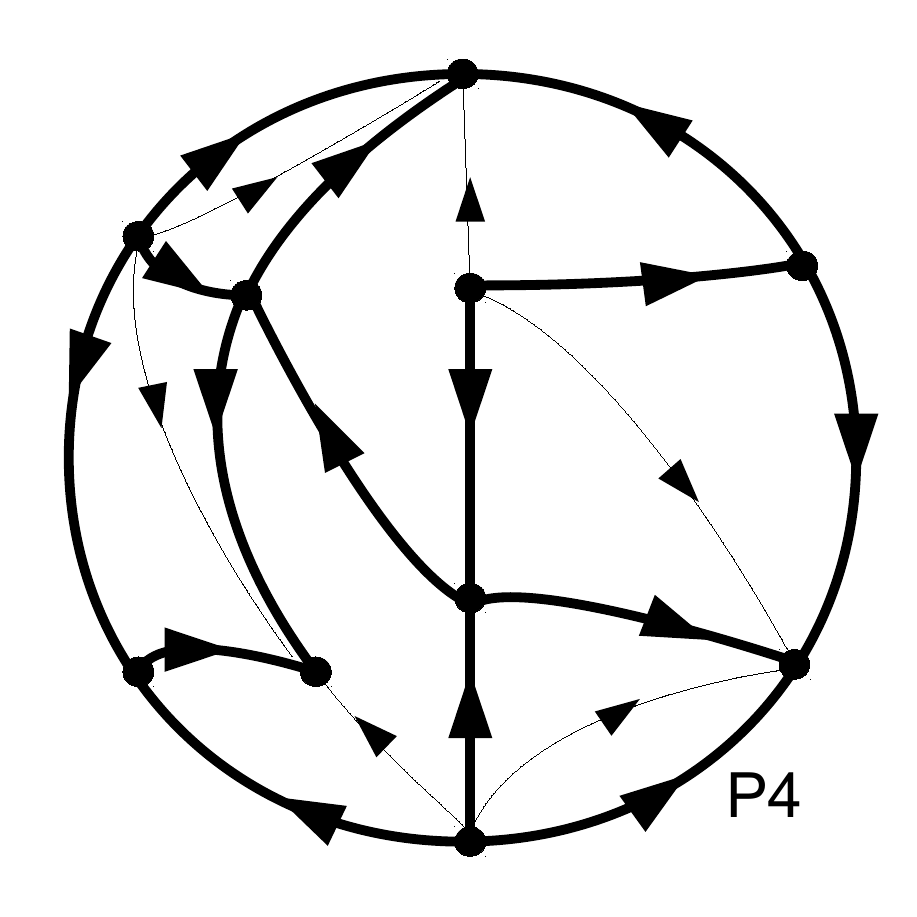}\end{minipage}
    %\begin{minipage}[t]{2.7cm}\psfrag{d}{$d$}\centering\includegraphics[scale=.31]{f27e.pdf}\end{minipage}

    \begin{minipage}[t]{2.7cm} \psfrag{a}{$a$}\centering\includegraphics[scale=.31]{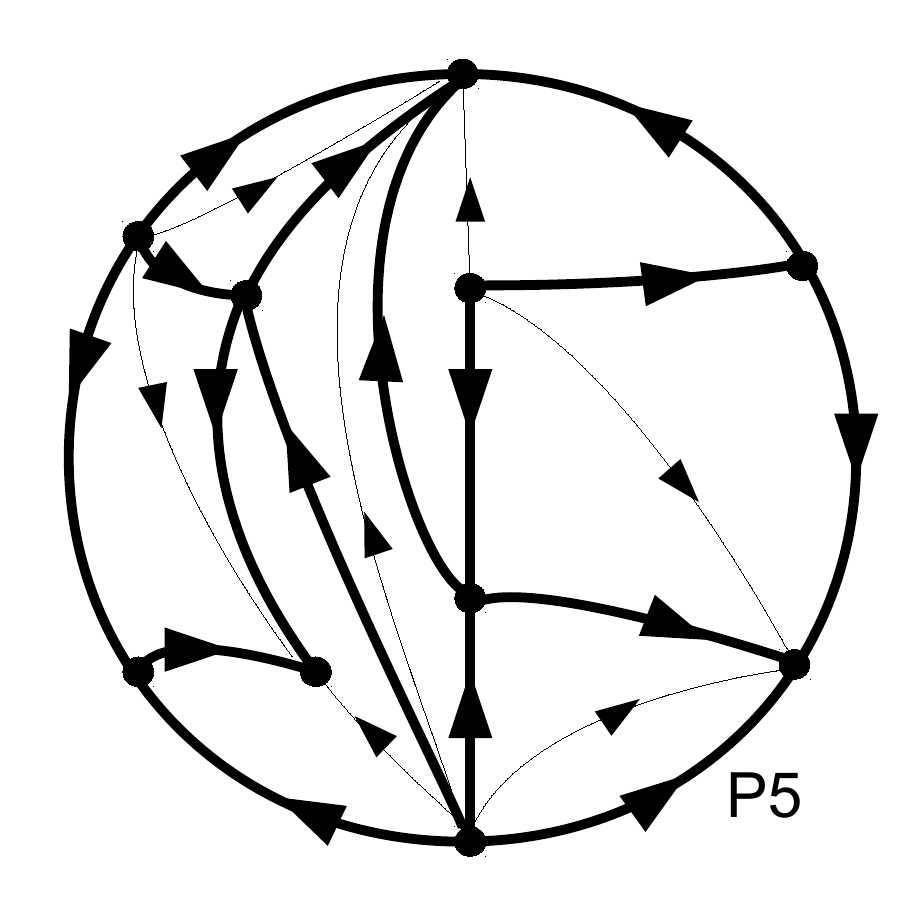}\end{minipage}
    \begin{minipage}[t]{2.7cm}\psfrag{b}{$b$}\centering\includegraphics[scale=.31]{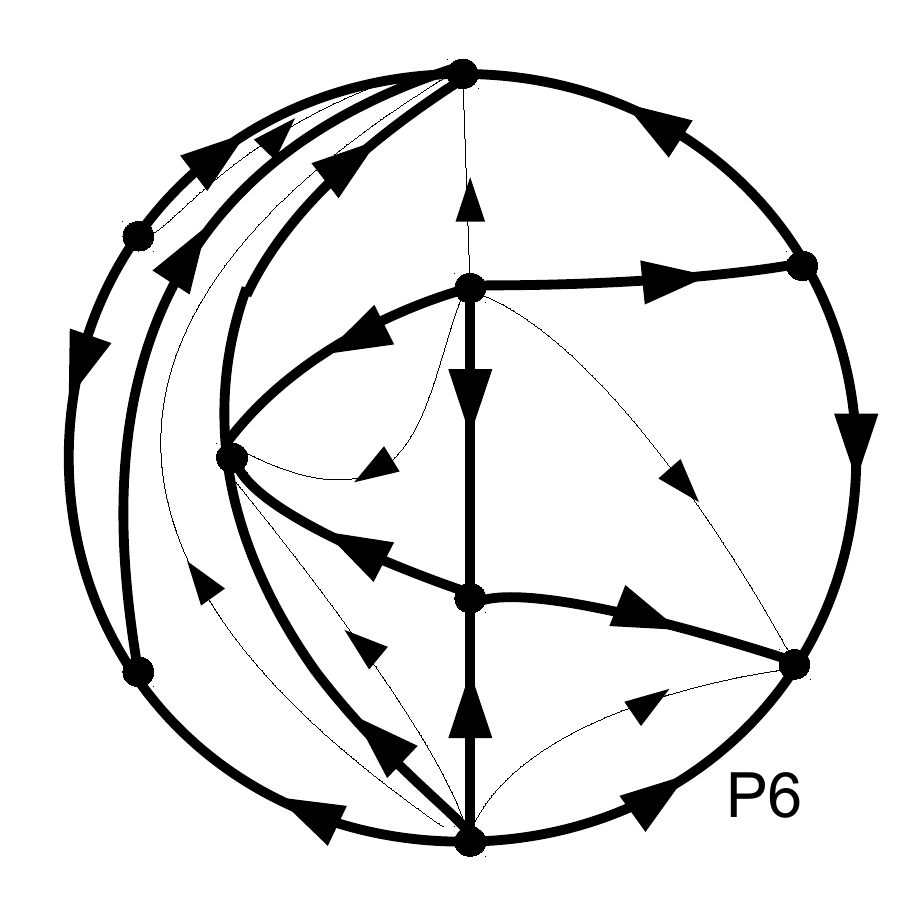}\end{minipage}
    \begin{minipage}[t]{2.7cm}\psfrag{c}{$c$}\centering\includegraphics[scale=.31]{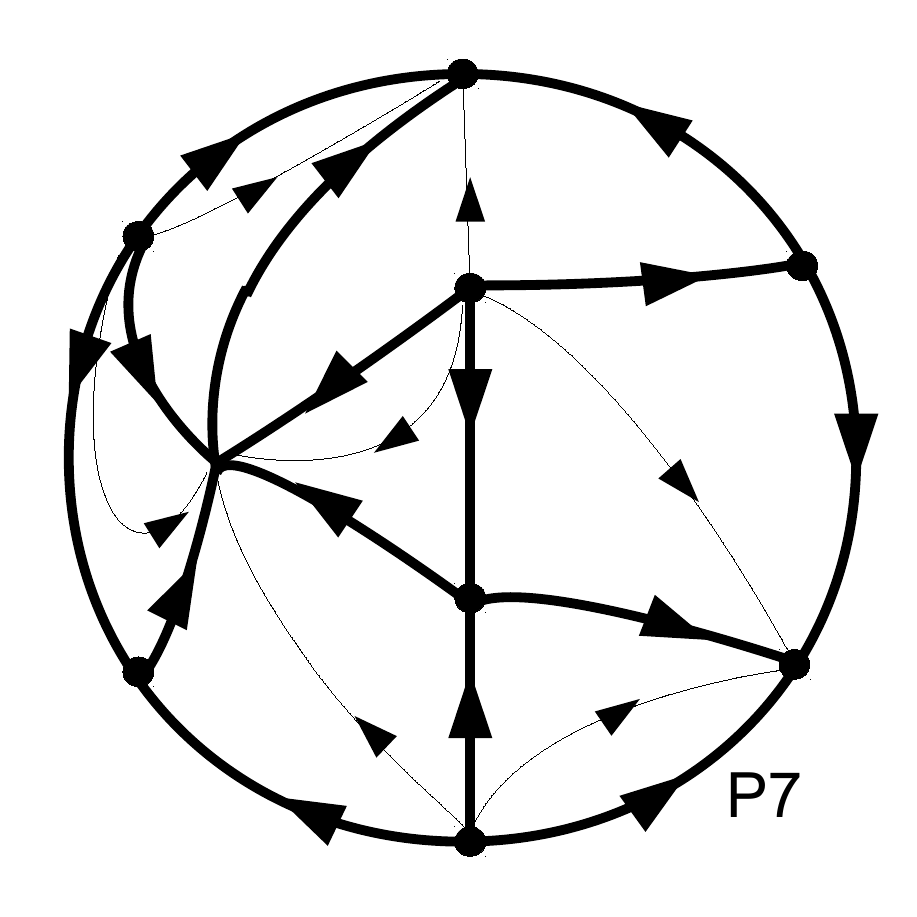}\end{minipage}
    \begin{minipage}[t]{2.7cm}\psfrag{d}{$d$}\centering\includegraphics[scale=.31]{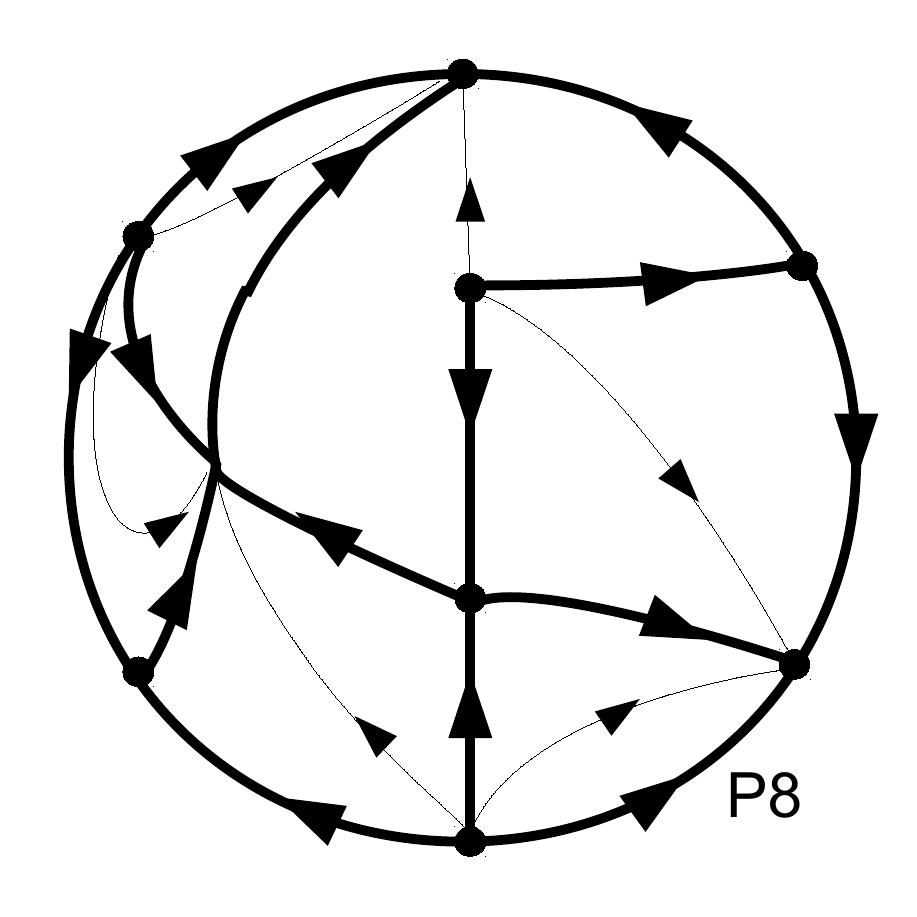}\end{minipage}

    \begin{minipage}[t]{2.7cm} \psfrag{a}{$a$}\centering\includegraphics[scale=.31]{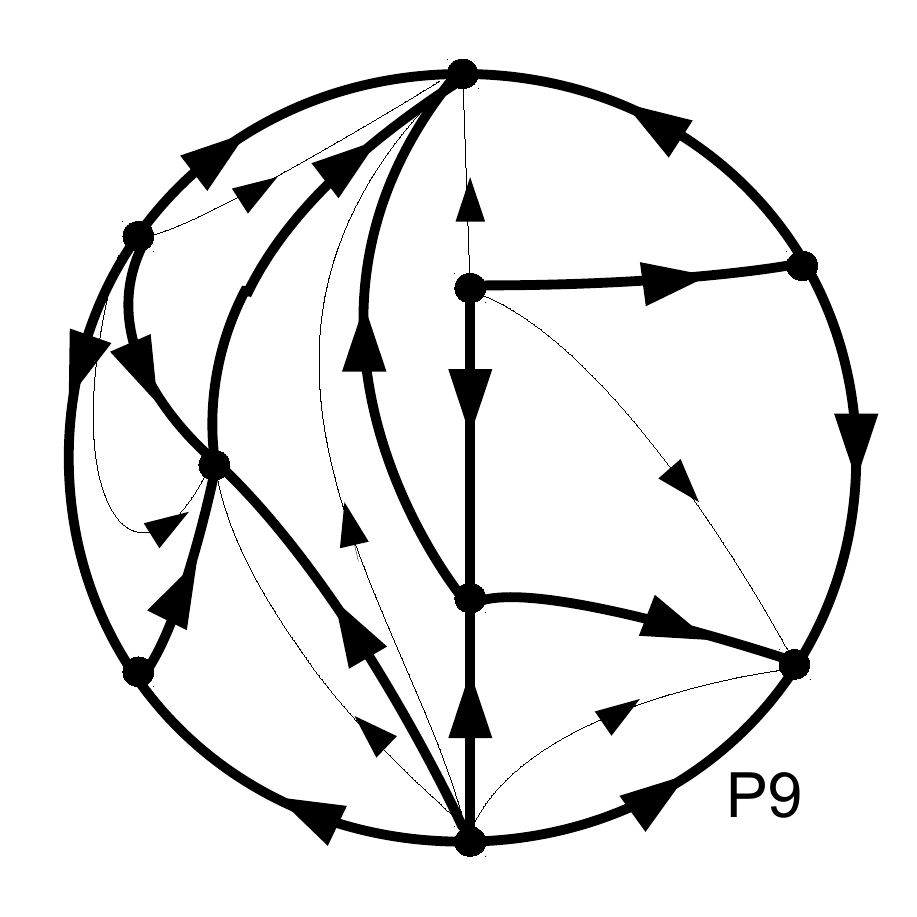}\end{minipage}
    \begin{minipage}[t]{2.7cm}\psfrag{b}{$b$}\centering\includegraphics[scale=.31]{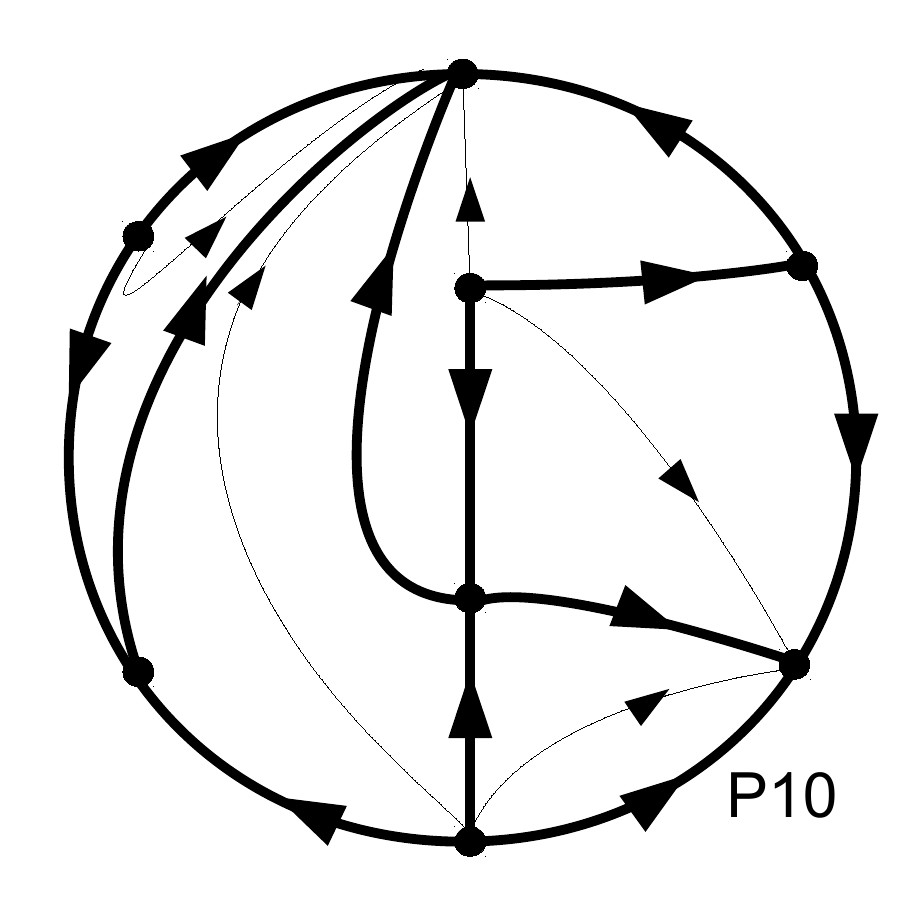}\end{minipage}
    \begin{minipage}[t]{2.7cm}\psfrag{c}{$c$}\centering\includegraphics[scale=.31]{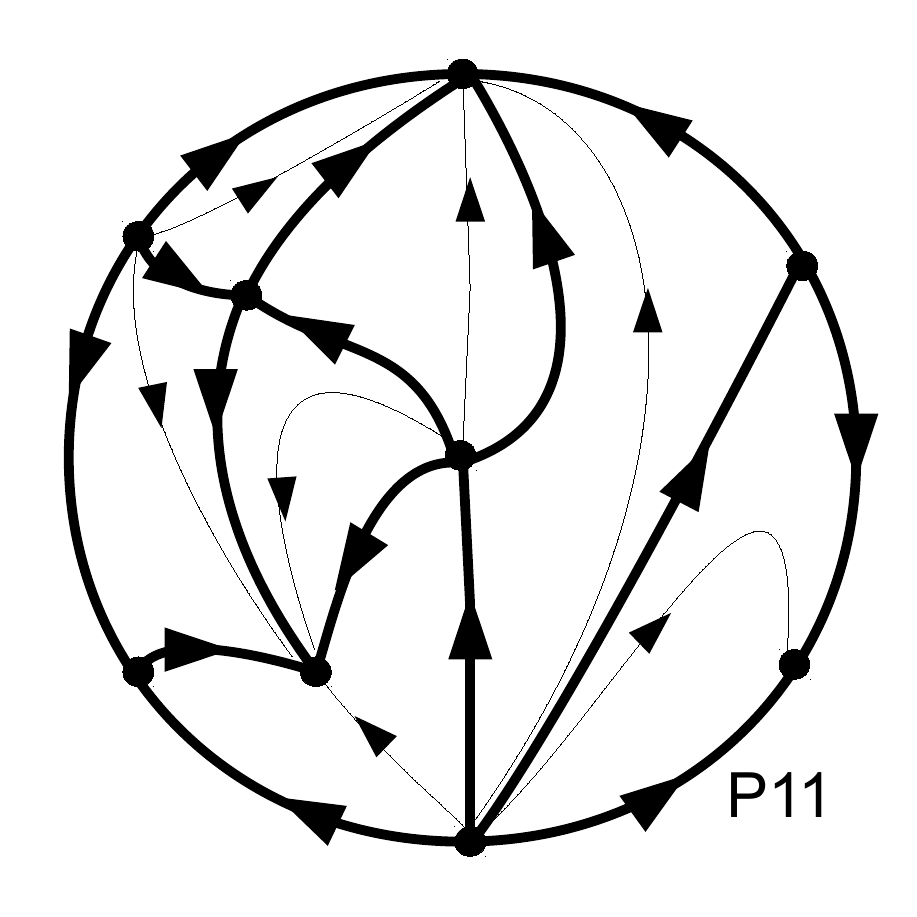}\end{minipage}
    \begin{minipage}[t]{2.7cm}\psfrag{d}{$d$}\centering\includegraphics[scale=.31]{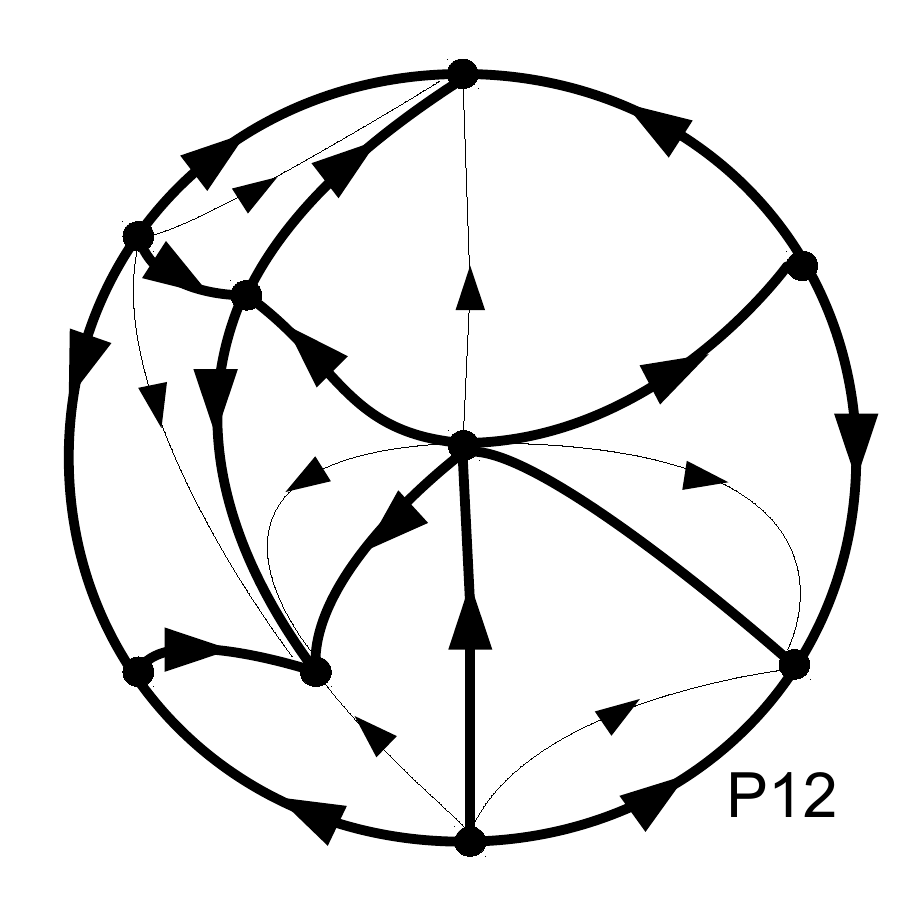}\end{minipage}

    \begin{minipage}[t]{2.7cm} \psfrag{a}{$a$}\centering\includegraphics[scale=.31]{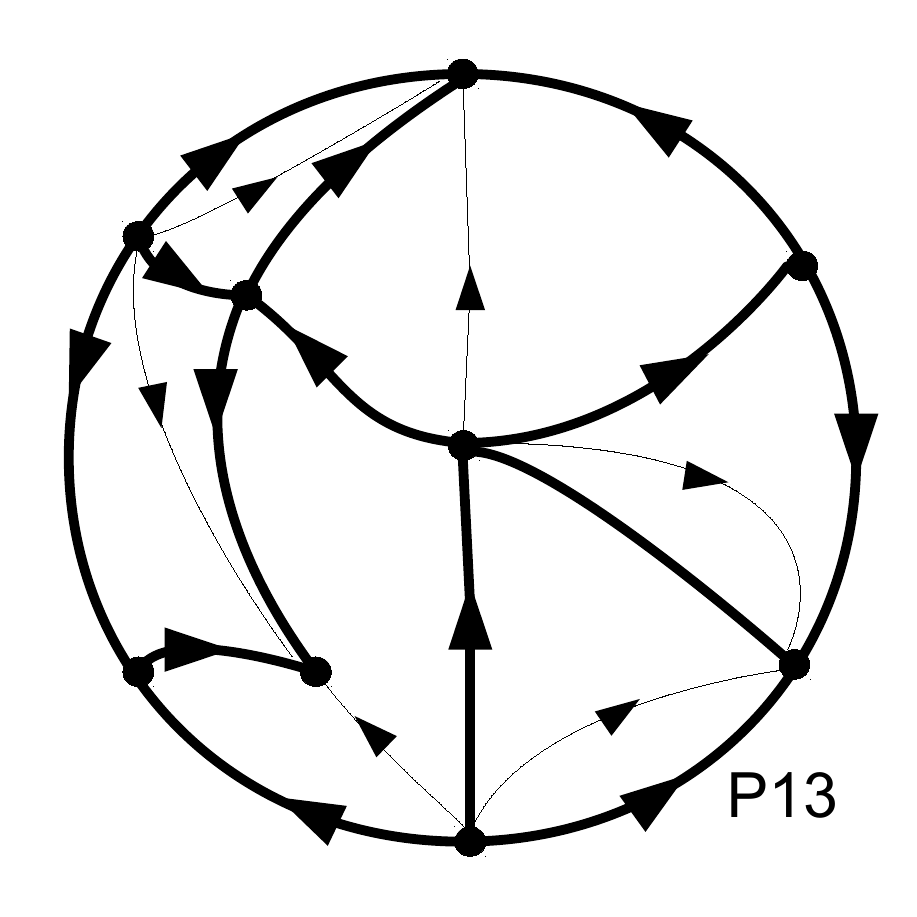}\end{minipage}
    \begin{minipage}[t]{2.7cm}\psfrag{b}{$b$}\centering\includegraphics[scale=.31]{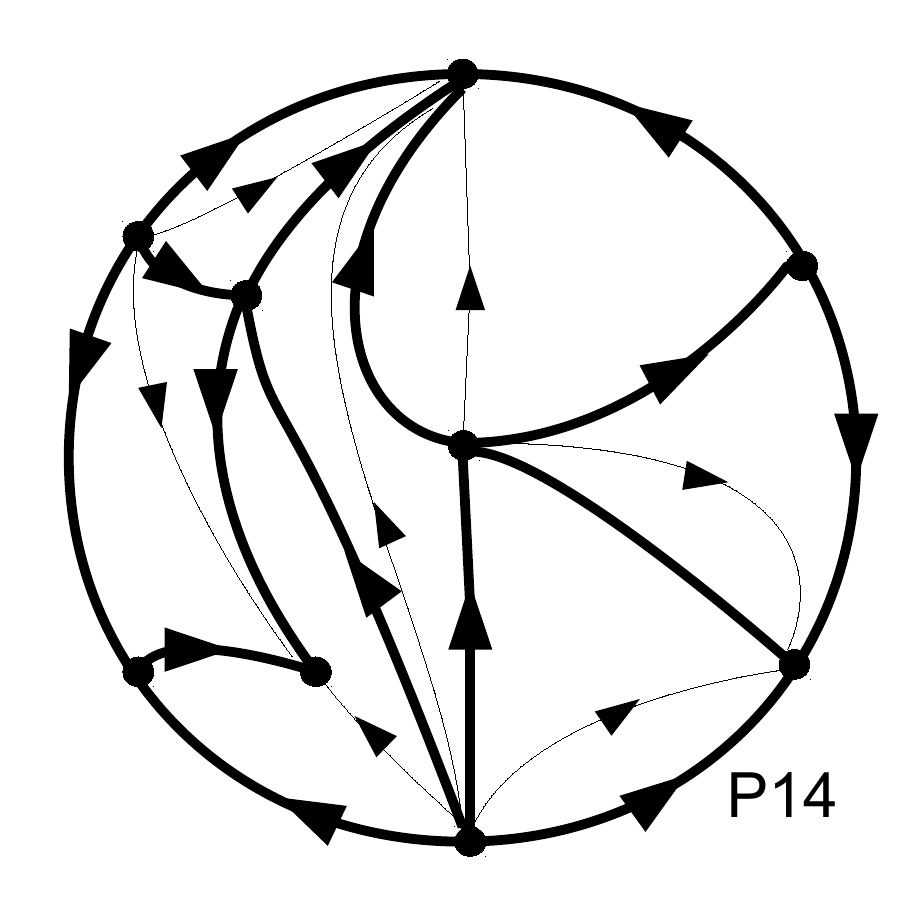}\end{minipage}
    \begin{minipage}[t]{2.7cm}\psfrag{c}{$c$}\centering\includegraphics[scale=.31]{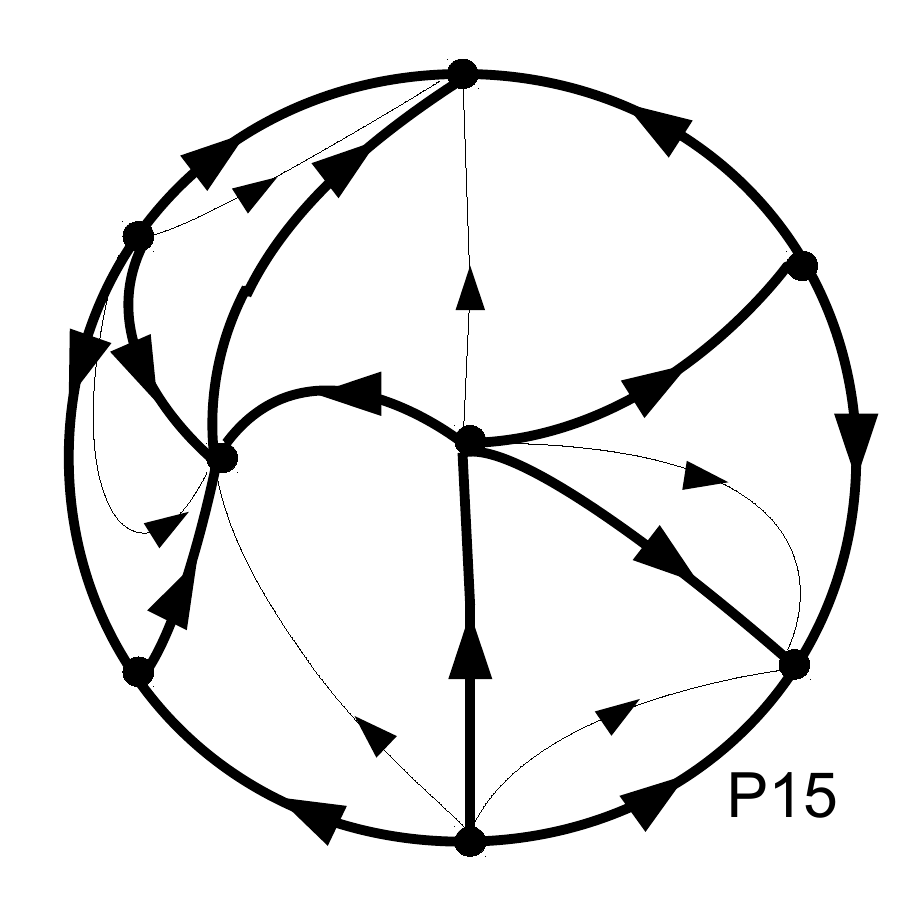}\end{minipage}
    \begin{minipage}[t]{2.7cm}\psfrag{d}{$d$}\centering\includegraphics[scale=.31]{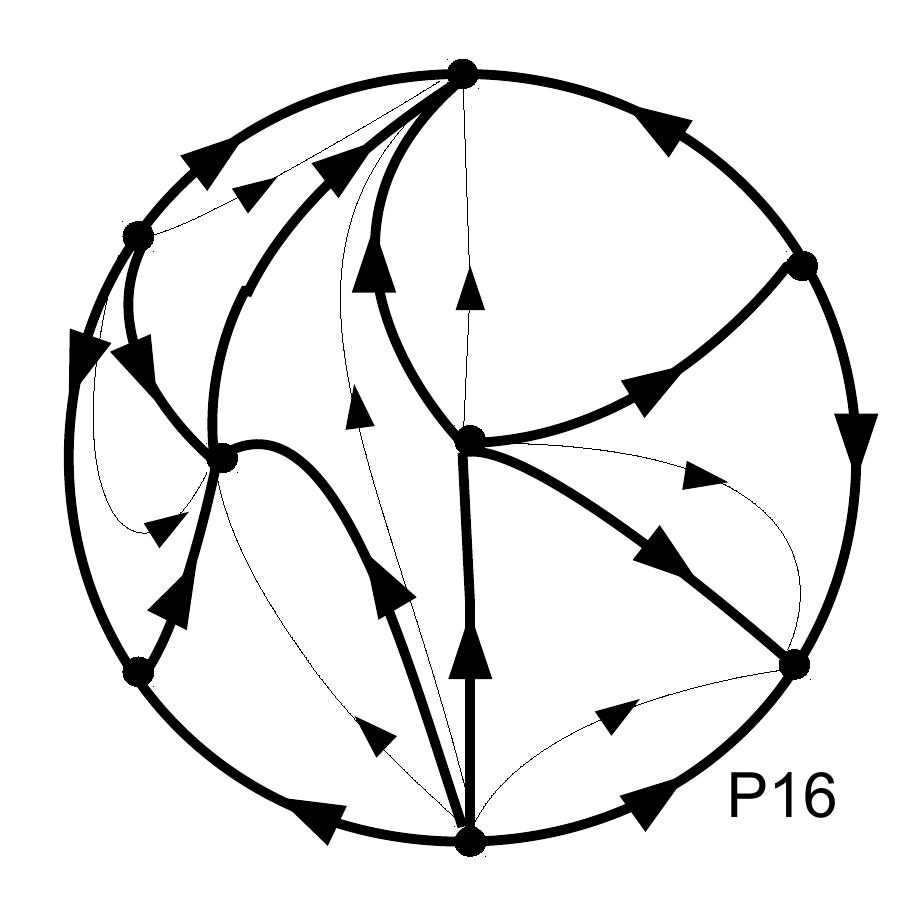}\end{minipage}

    \begin{minipage}[t]{2.7cm} \psfrag{a}{$a$}\centering\includegraphics[scale=.31]{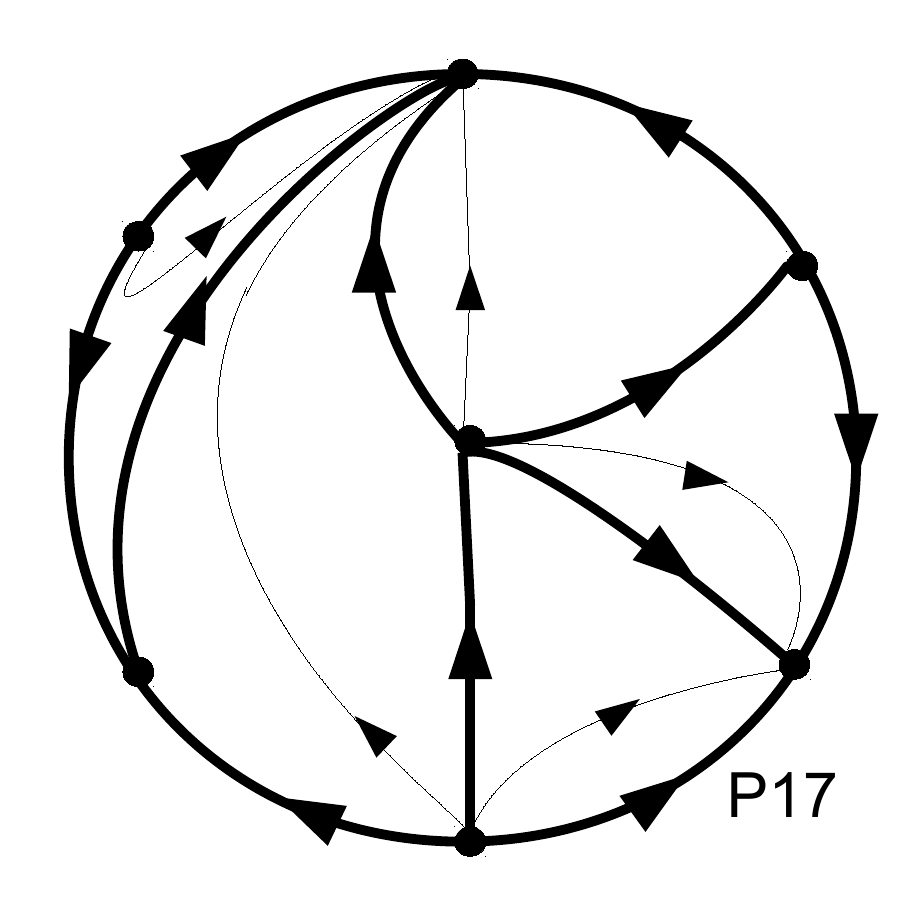}\end{minipage}
    \begin{minipage}[t]{2.7cm}\psfrag{b}{$b$}\centering\includegraphics[scale=.31]{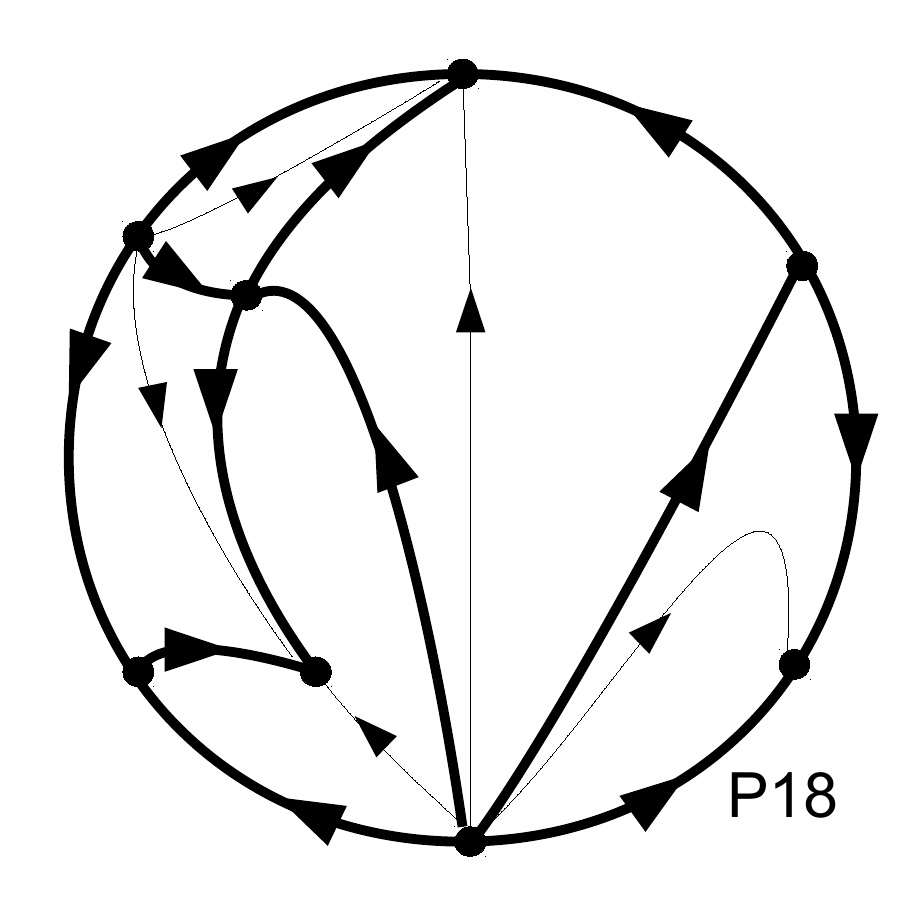}\end{minipage}
    \begin{minipage}[t]{2.7cm}\psfrag{c}{$c$}\centering\includegraphics[scale=.31]{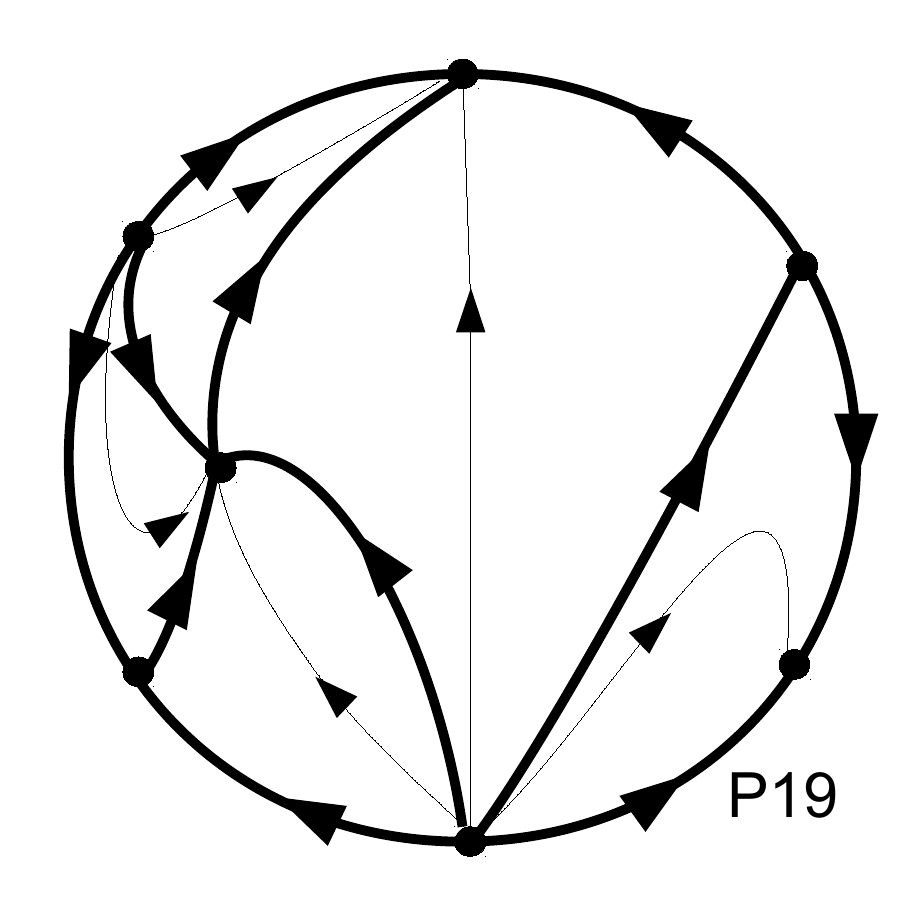}\end{minipage}
    \begin{minipage}[t]{2.7cm}\psfrag{d}{$d$}\centering\includegraphics[scale=.31]{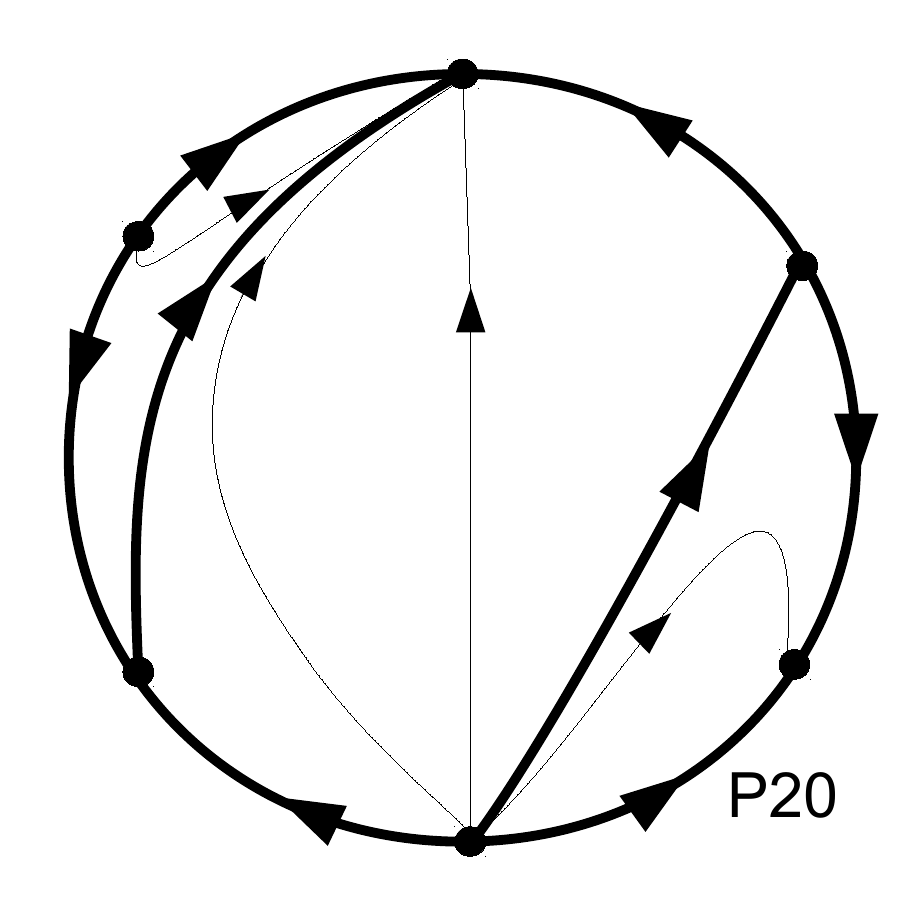}\end{minipage}

    \begin{minipage}[t]{2.7cm} \psfrag{a}{$a$}\centering\includegraphics[scale=.31]{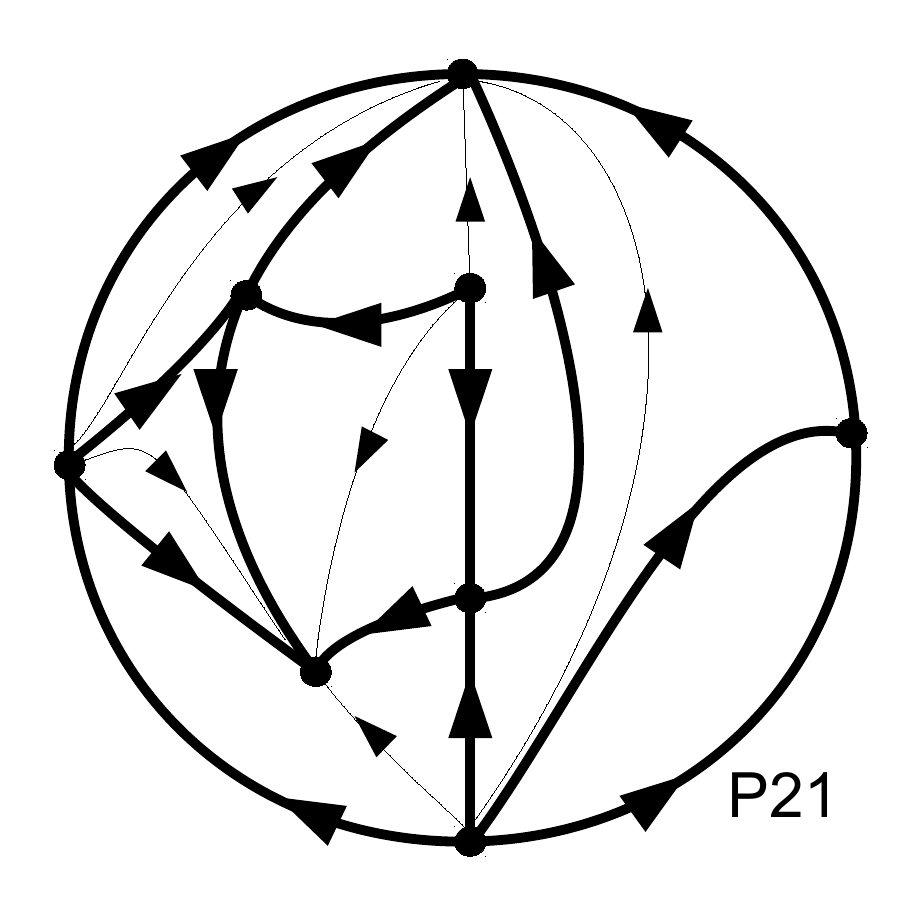}\end{minipage}
    \begin{minipage}[t]{2.7cm}\psfrag{b}{$b$}\centering\includegraphics[scale=.31]{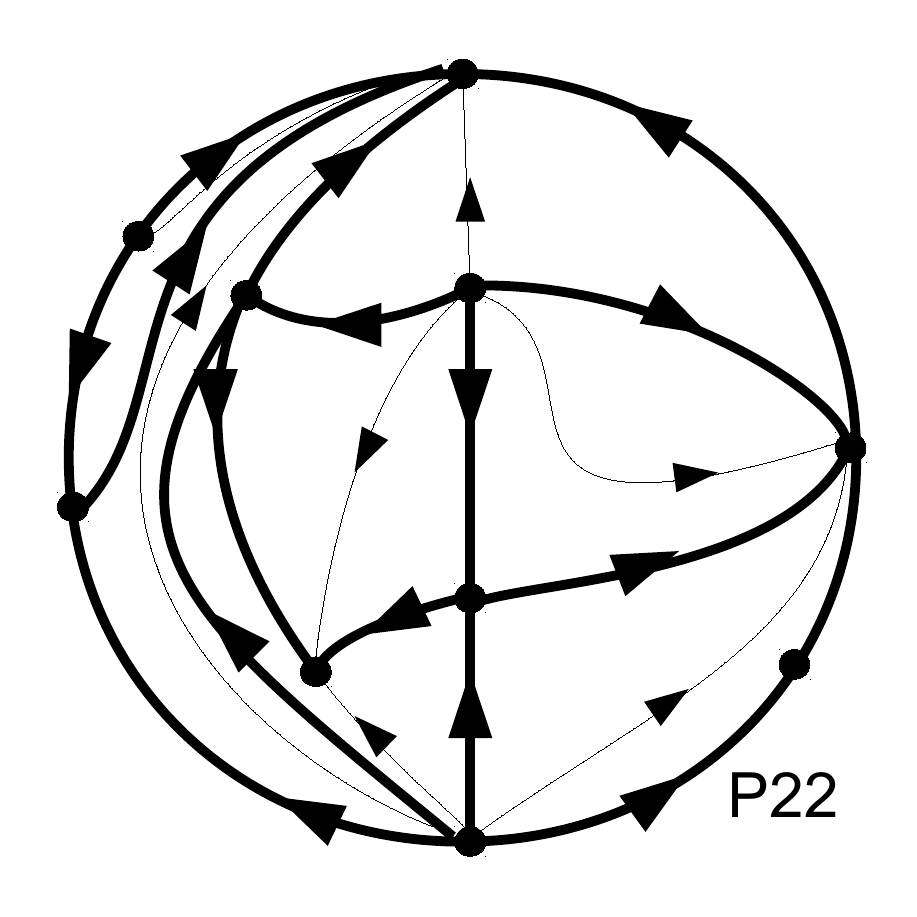}\end{minipage}
    \begin{minipage}[t]{2.7cm}\psfrag{c}{$c$}\centering\includegraphics[scale=.31]{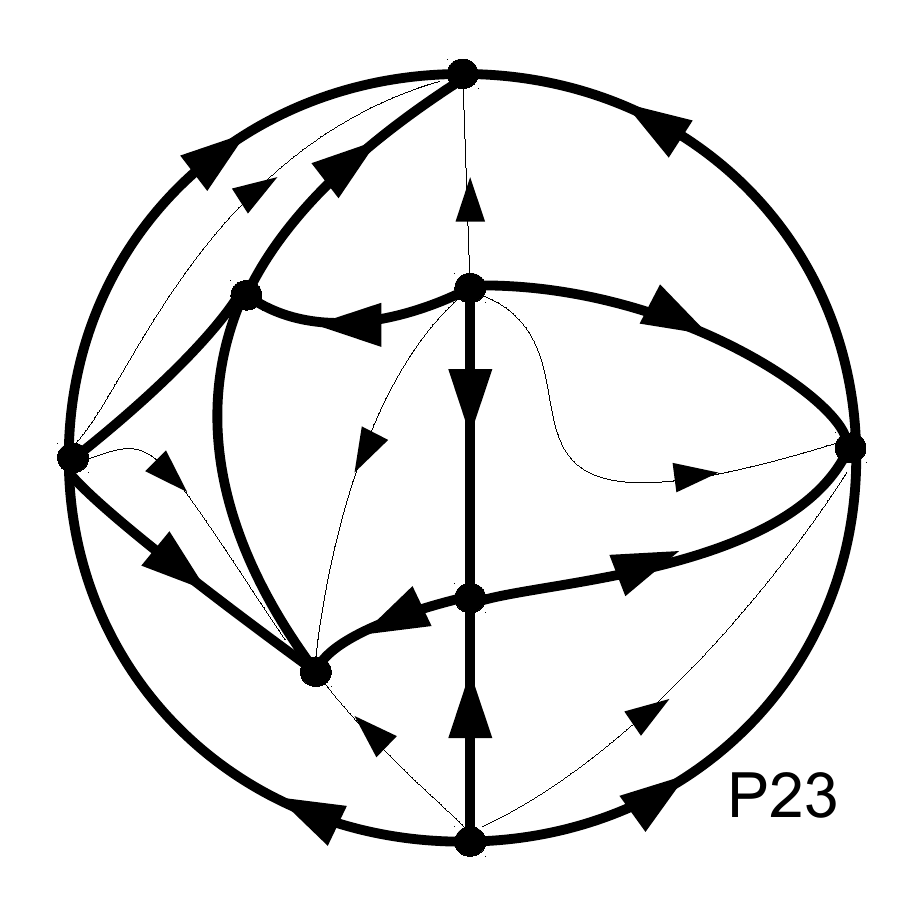}\end{minipage}
    \begin{minipage}[t]{2.7cm}\psfrag{d}{$d$}\centering\includegraphics[scale=.31]{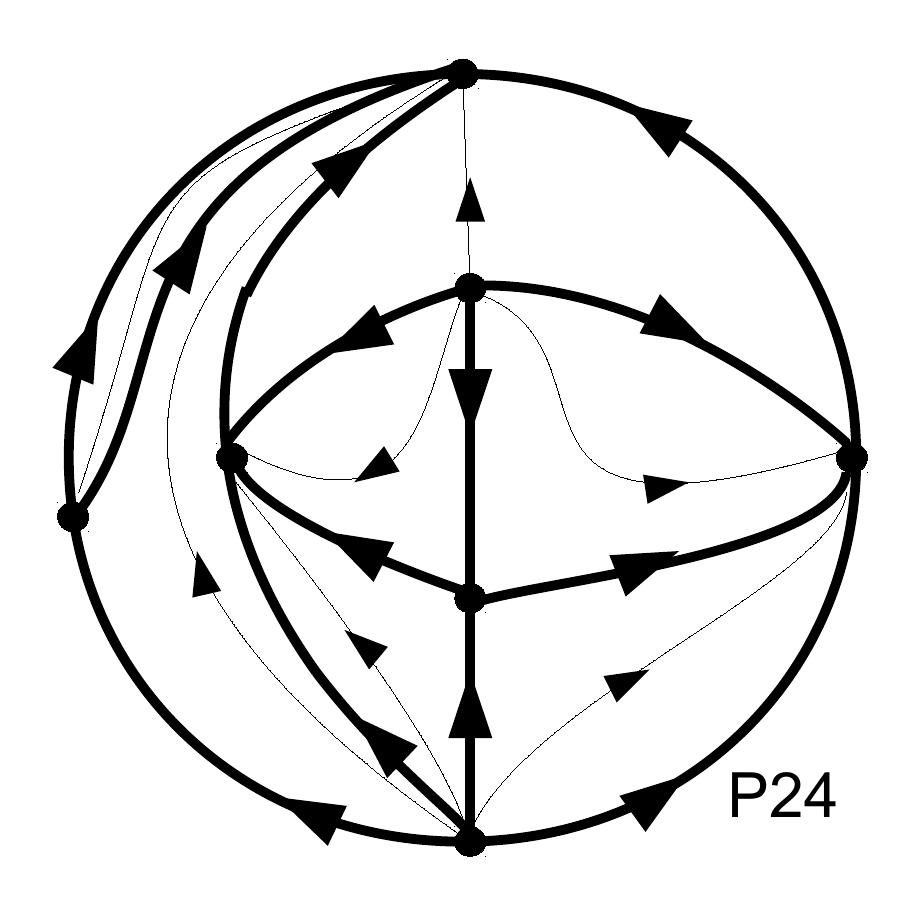}\end{minipage}

    \begin{minipage}[t]{2.7cm} \psfrag{a}{$a$}\centering\includegraphics[scale=.31]{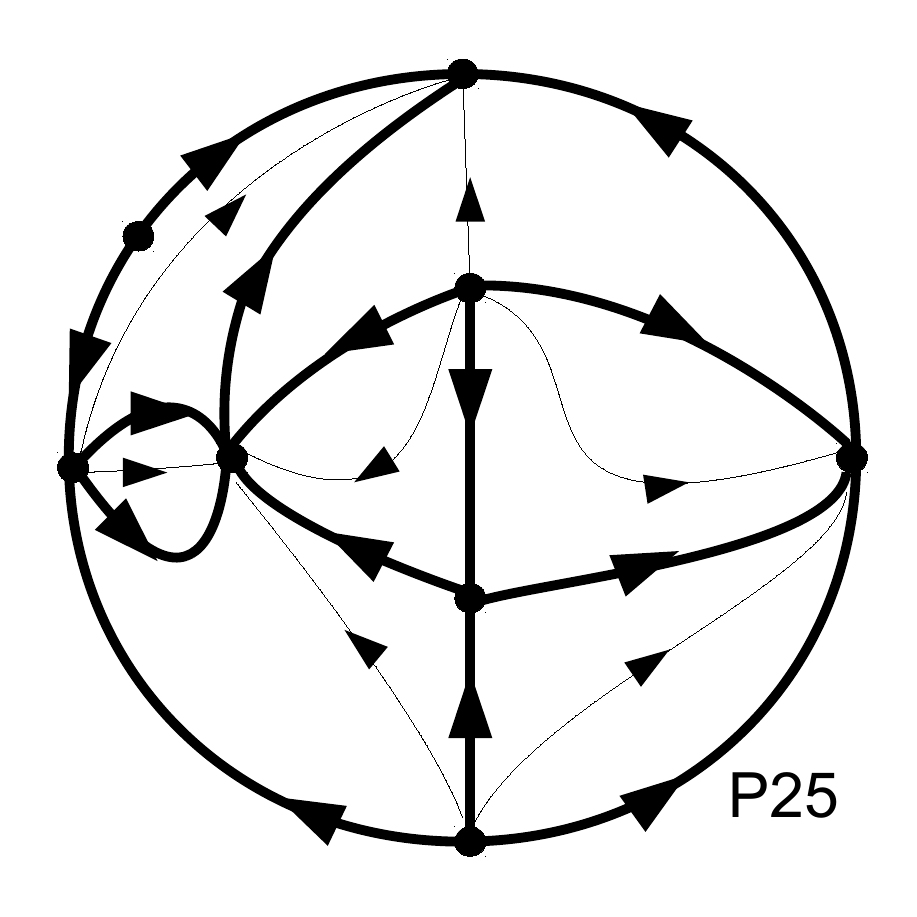}\end{minipage}
    \begin{minipage}[t]{2.7cm}\psfrag{b}{$b$}\centering\includegraphics[scale=.31]{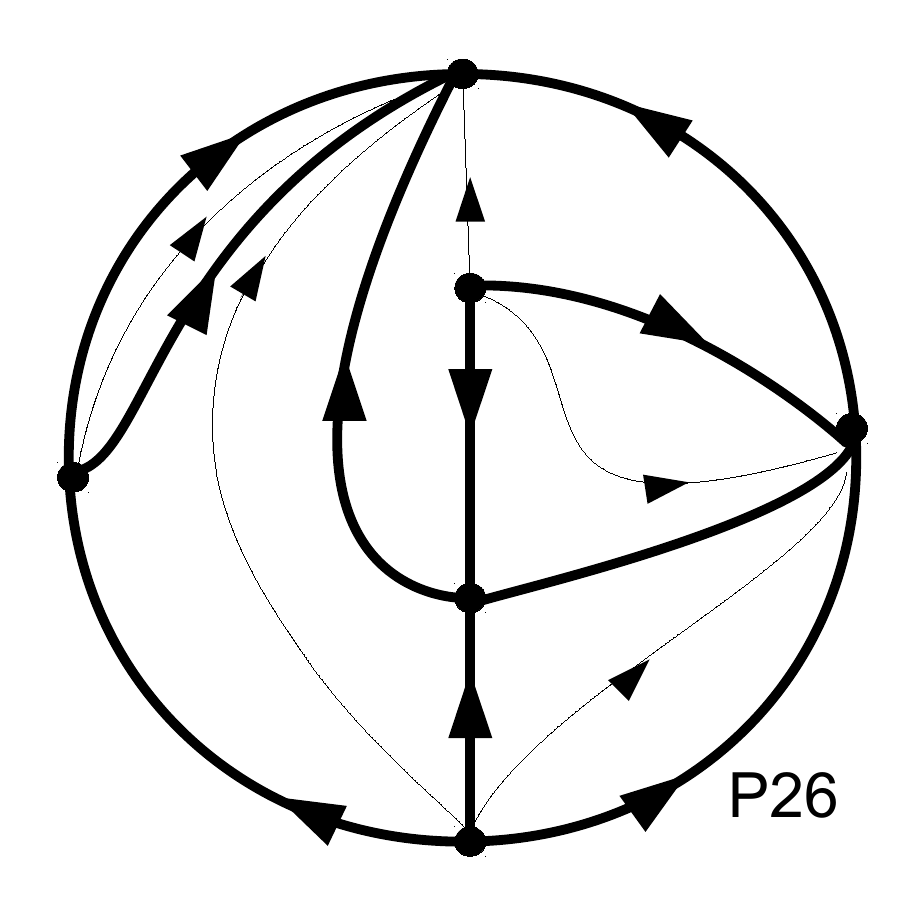}\end{minipage}
    \begin{minipage}[t]{2.7cm}\psfrag{c}{$c$}\centering\includegraphics[scale=.31]{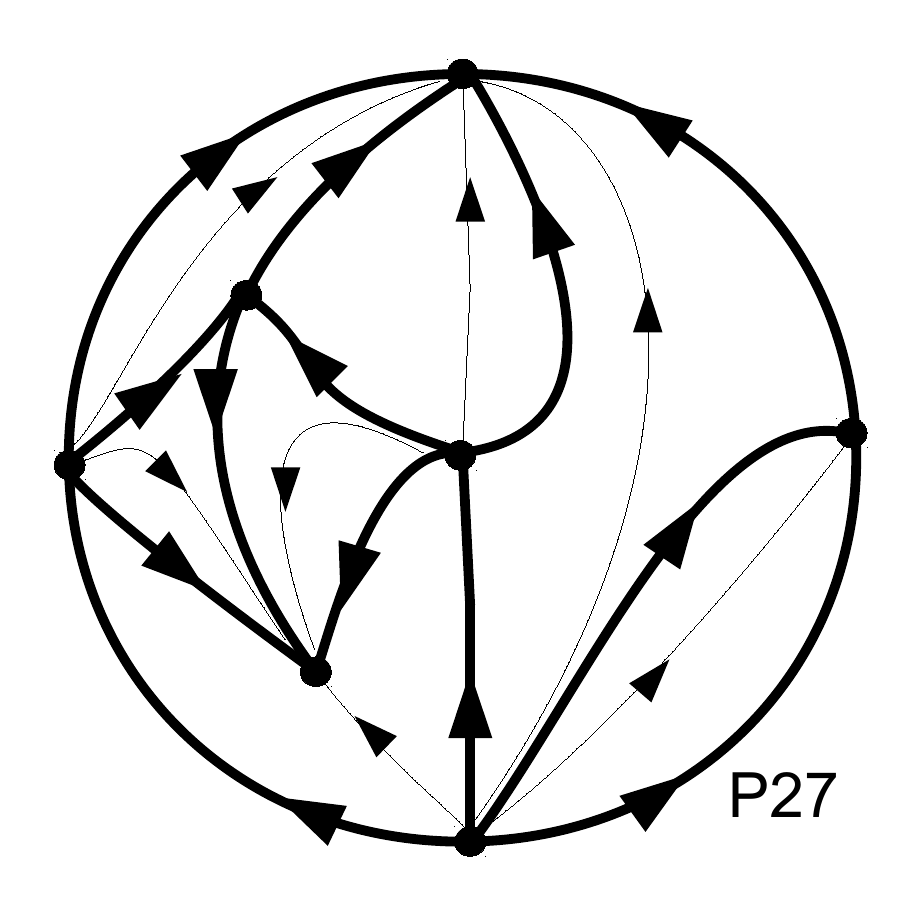}\end{minipage}
    \begin{minipage}[t]{2.7cm}\psfrag{d}{$d$}\centering\includegraphics[scale=.31]{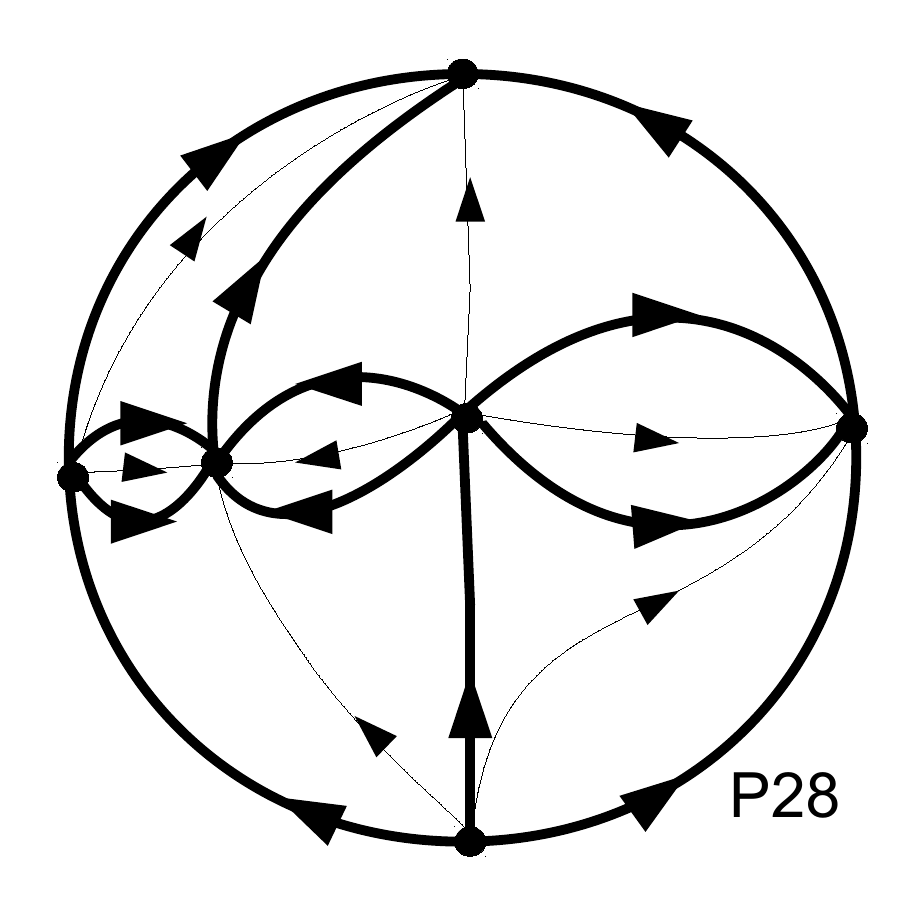}\end{minipage}
    \caption{\small Phase portraits of systems (1)
        the Poicar\'e disk.}\label{figura1-2}
\end{figure}
\newpage

\begin{figure}
    \begin{minipage}[t]{2.7cm} \psfrag{a}{$a$}\centering\includegraphics[scale=.31]{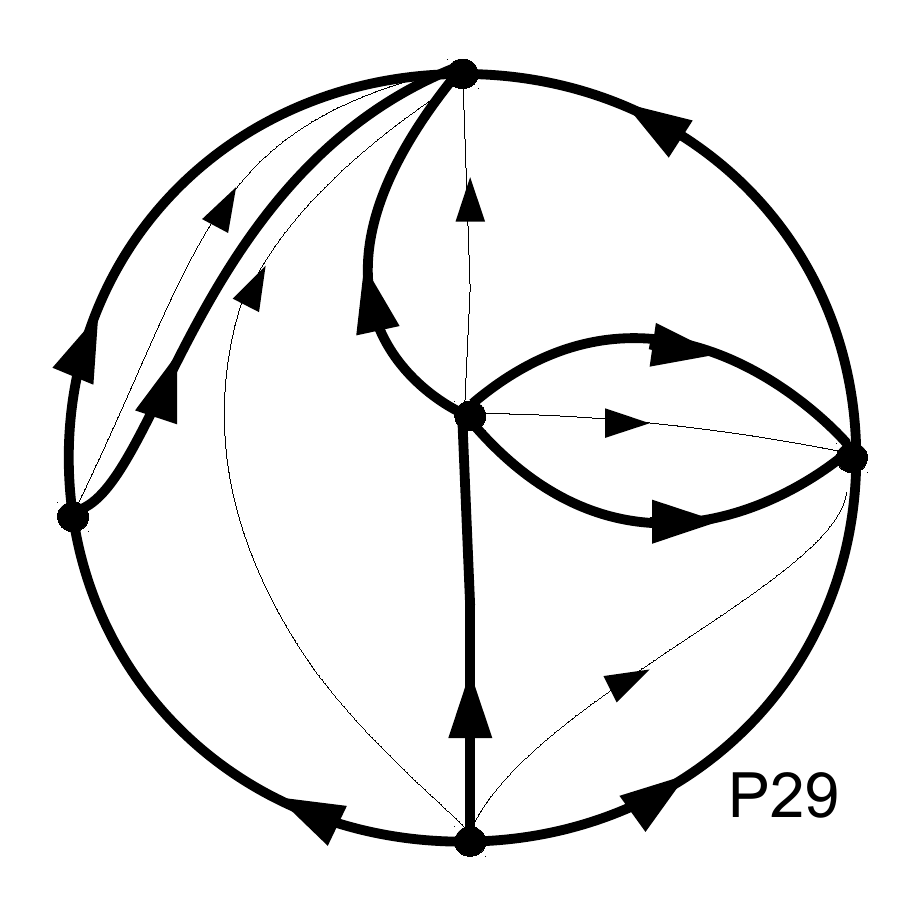}\end{minipage}
    \begin{minipage}[t]{2.7cm}\psfrag{b}{$b$}\centering\includegraphics[scale=.31]{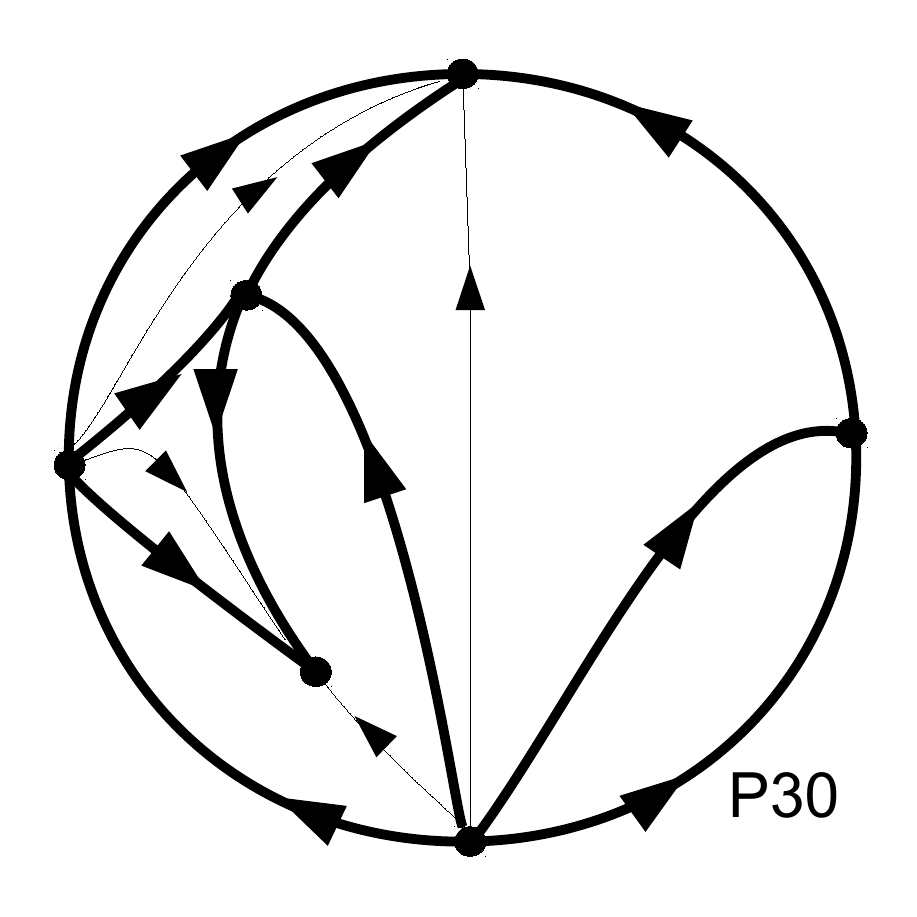}\end{minipage}
    \begin{minipage}[t]{2.7cm}\psfrag{c}{$c$}\centering\includegraphics[scale=.31]{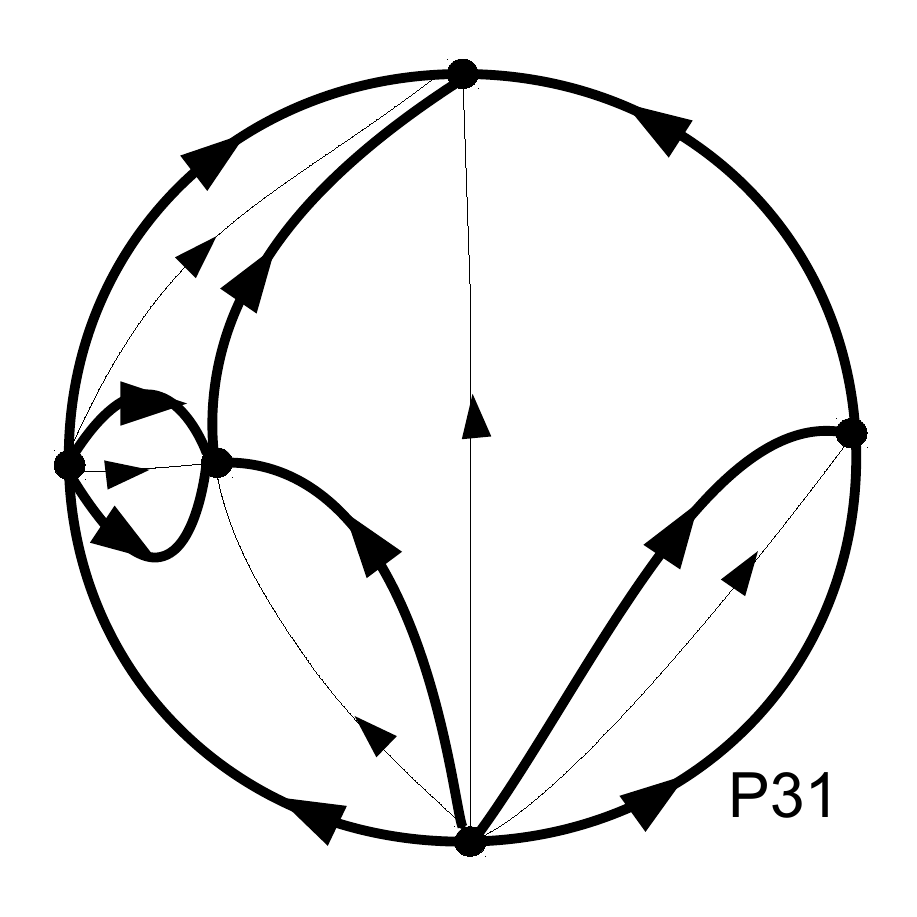}\end{minipage}
    \begin{minipage}[t]{2.7cm}\psfrag{d}{$d$}\centering\includegraphics[scale=.31]{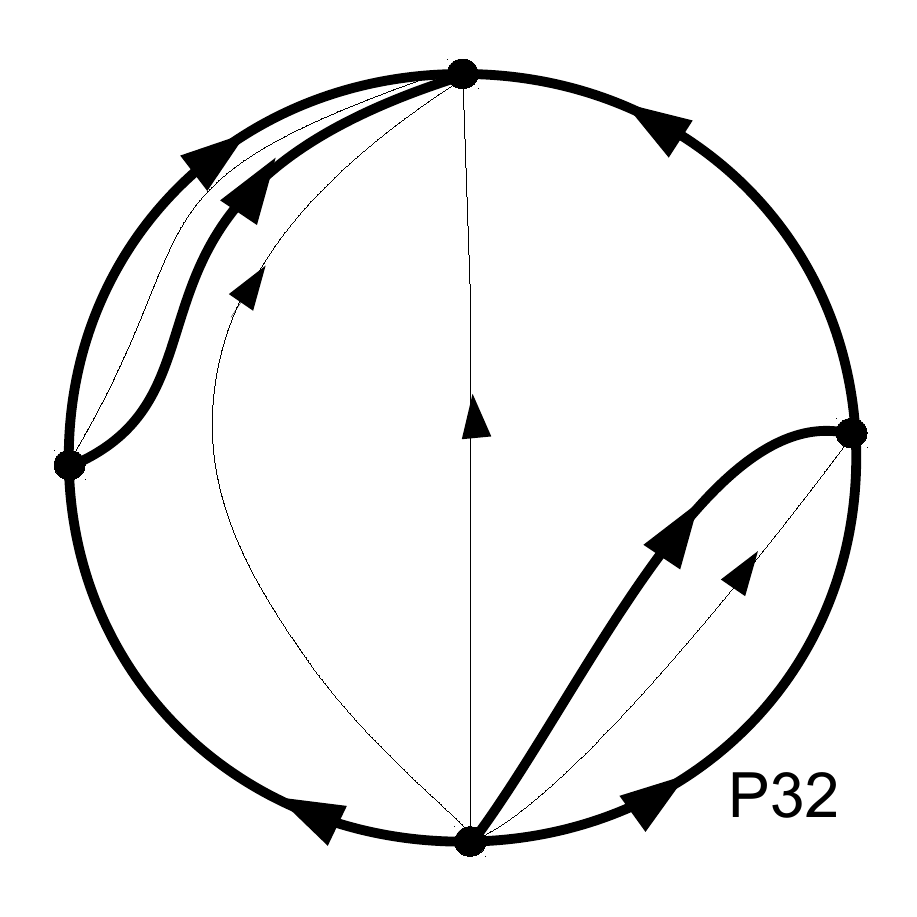}\end{minipage}

    \begin{minipage}[t]{2.7cm} \psfrag{a}{$a$}\centering\includegraphics[scale=.31]{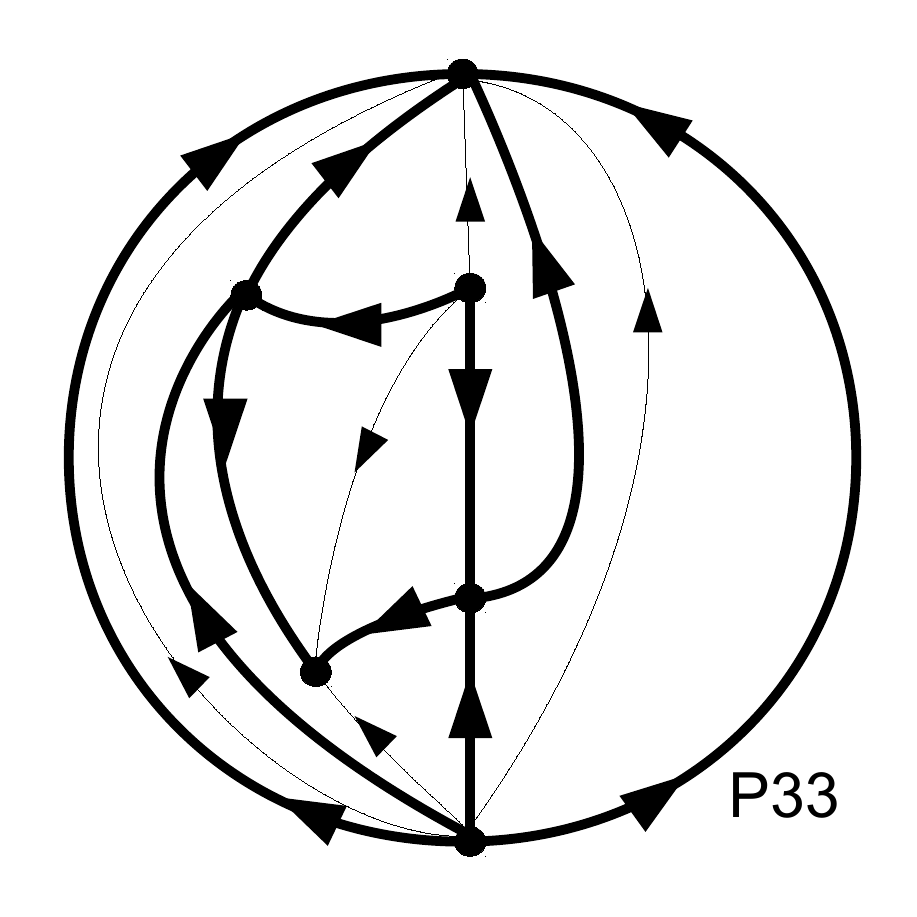}\end{minipage}
    \begin{minipage}[t]{2.7cm}\psfrag{b}{$b$}\centering\includegraphics[scale=.31]{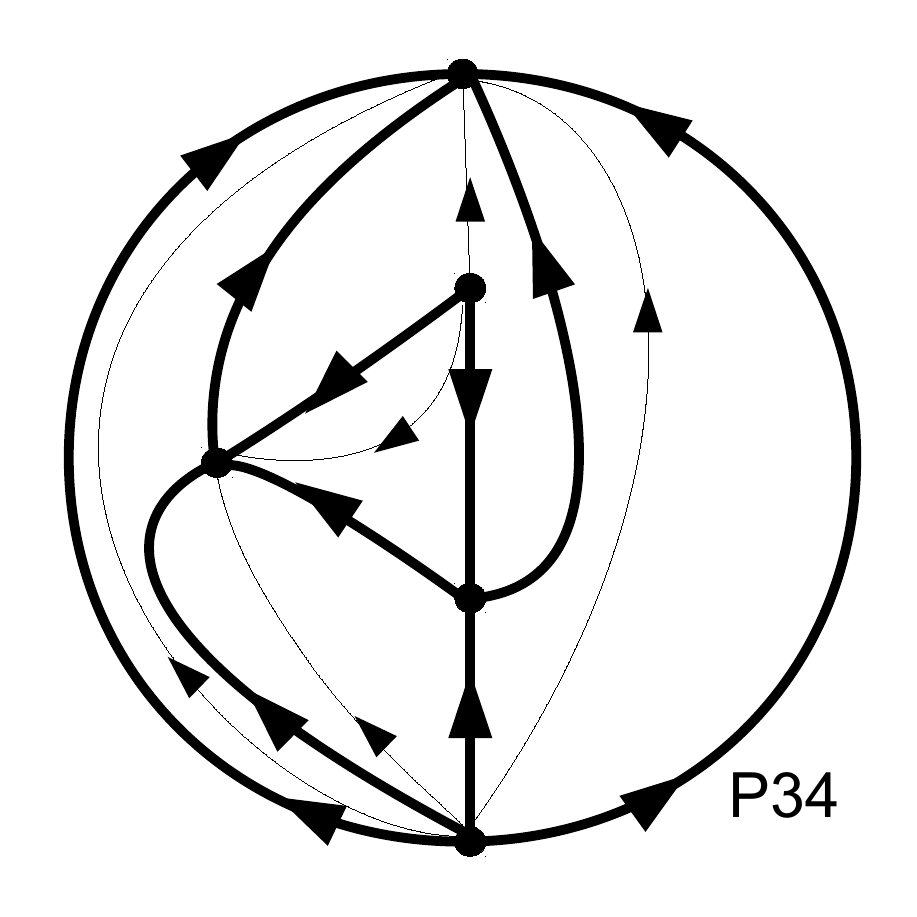}\end{minipage}
    \begin{minipage}[t]{2.7cm}\psfrag{c}{$c$}\centering\includegraphics[scale=.31]{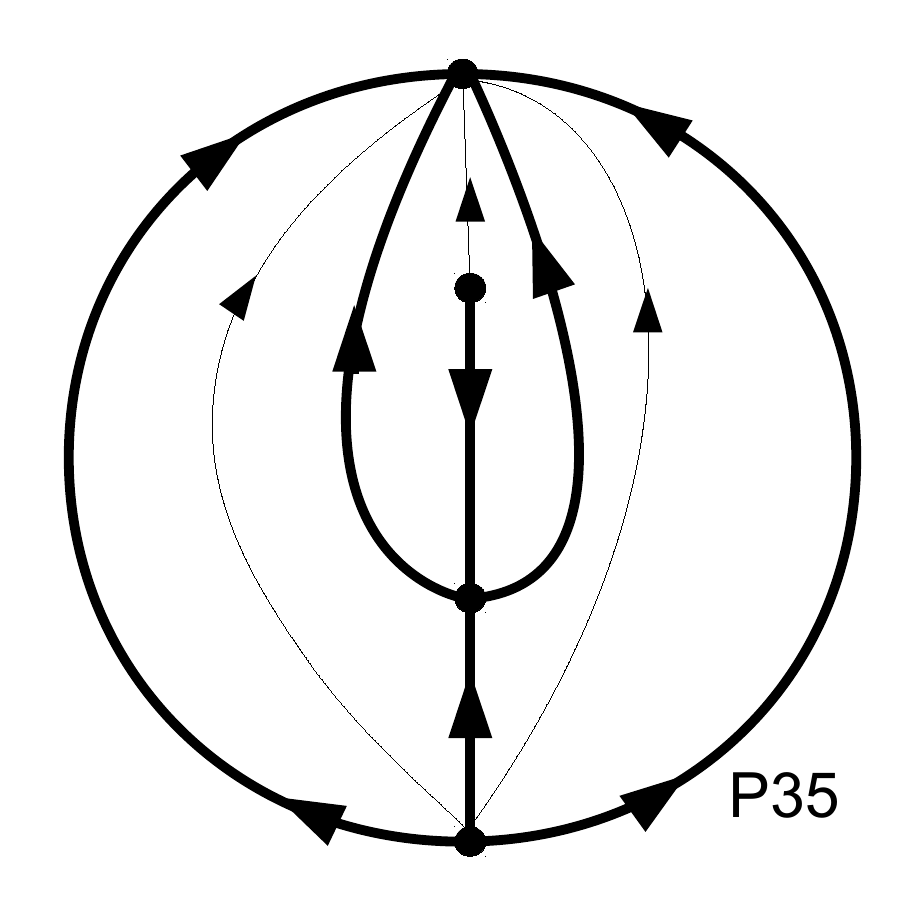}\end{minipage}
    \begin{minipage}[t]{2.7cm}\psfrag{d}{$d$}\centering\includegraphics[scale=.31]{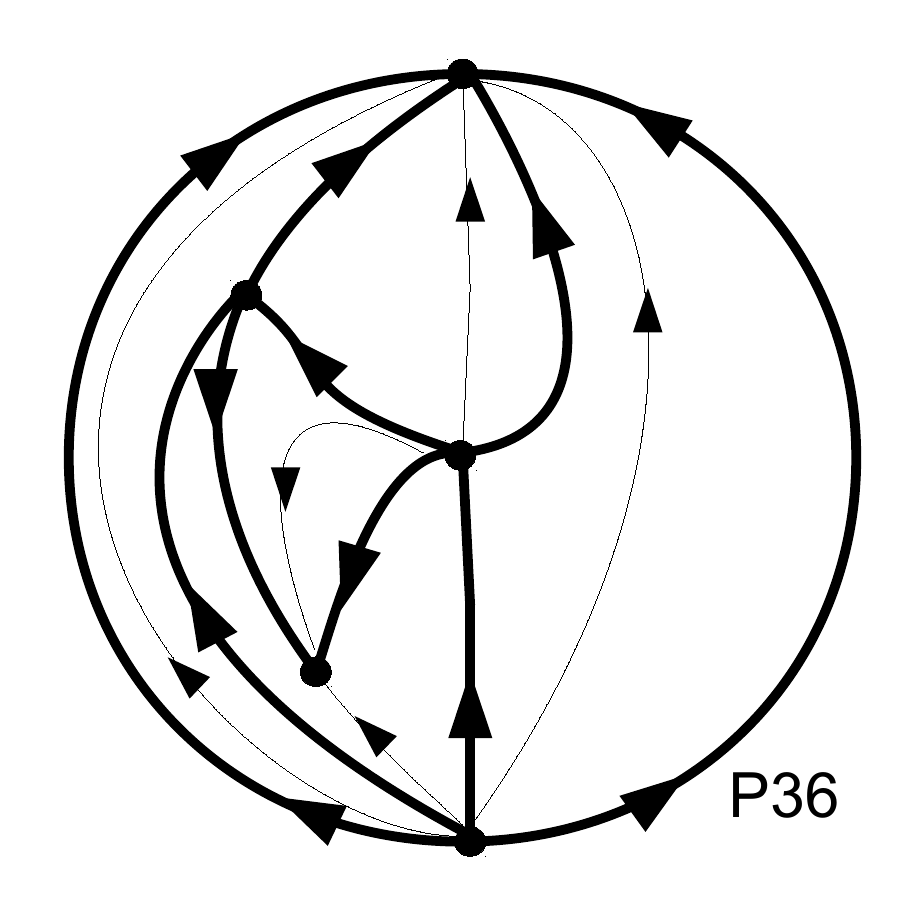}\end{minipage}

    \begin{minipage}[t]{2.7cm} \psfrag{a}{$a$}\centering\includegraphics[scale=.31]{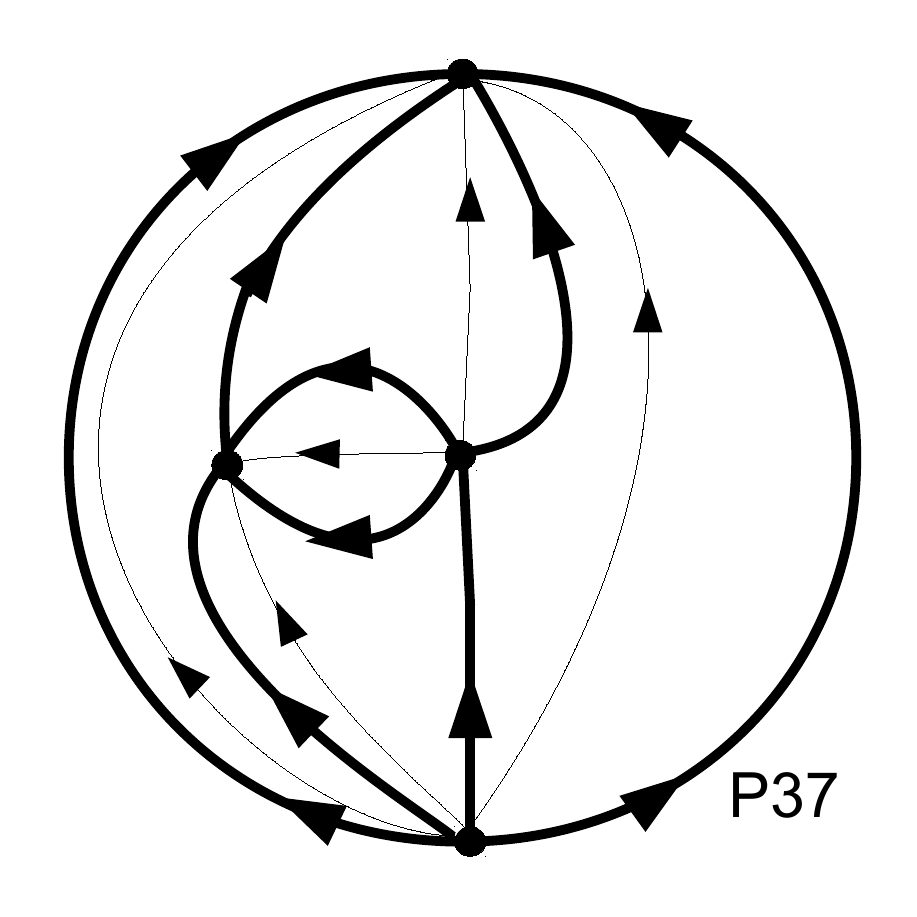}\end{minipage}
    \begin{minipage}[t]{2.7cm}\psfrag{b}{$b$}\centering\includegraphics[scale=.31]{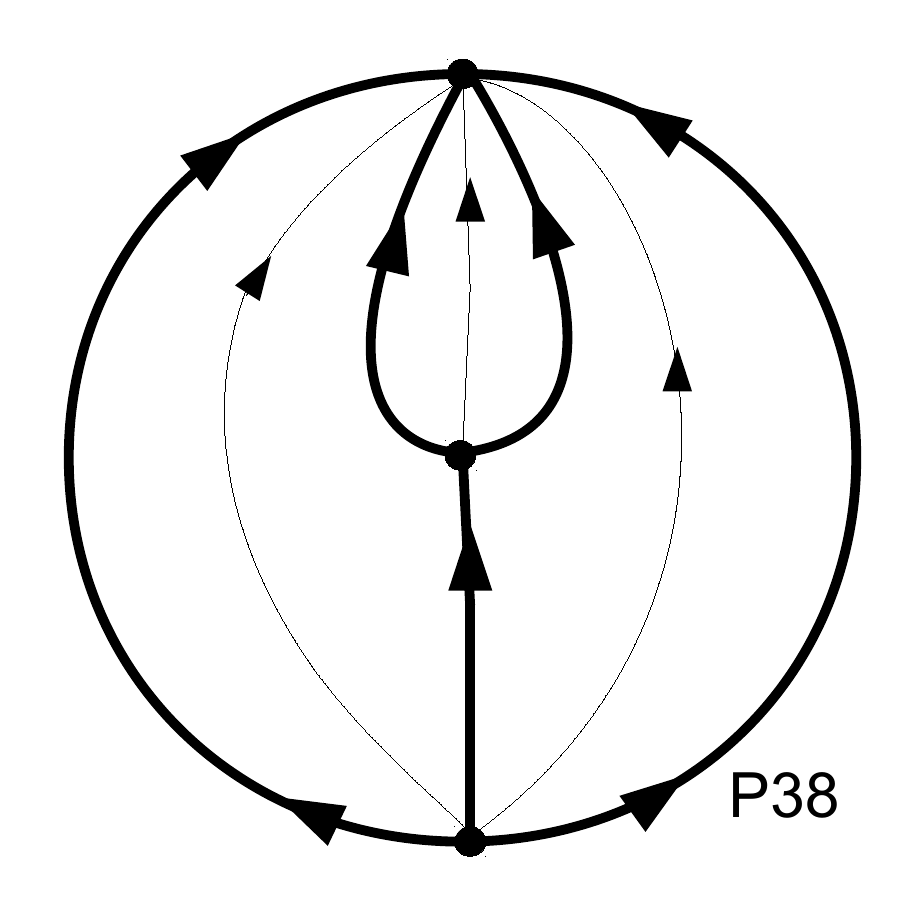}\end{minipage}
    \begin{minipage}[t]{2.7cm}\psfrag{c}{$c$}\centering\includegraphics[scale=.31]{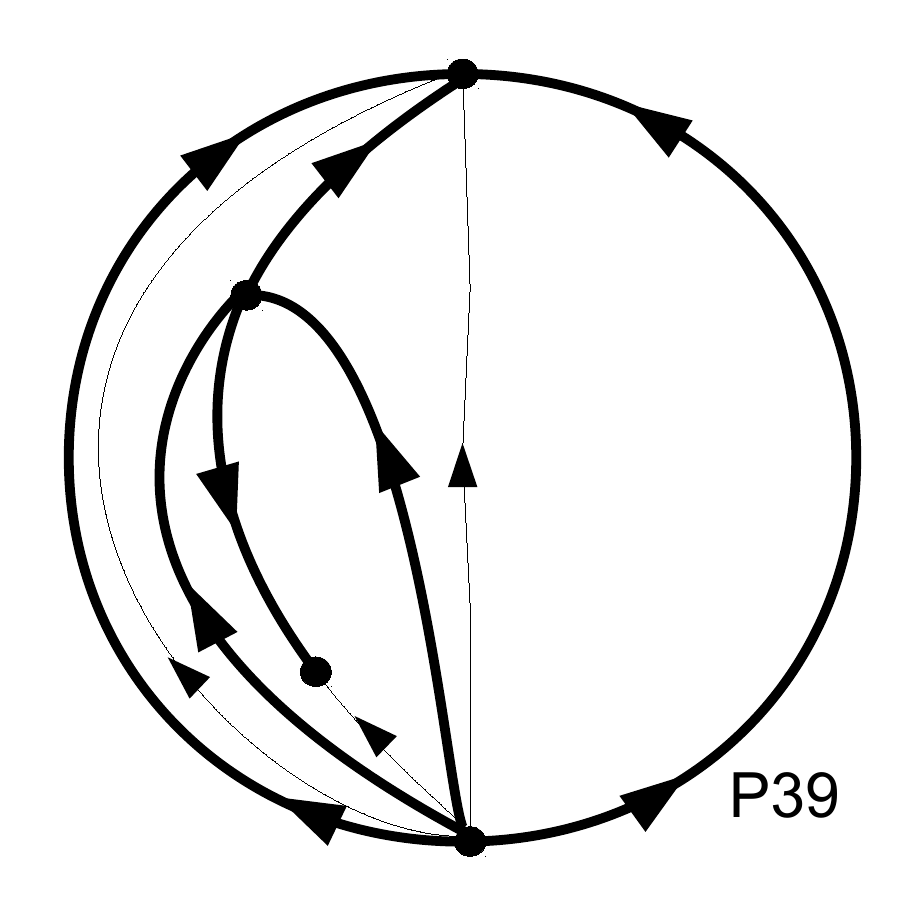}\end{minipage}
    \begin{minipage}[t]{2.7cm}\psfrag{d}{$d$}\centering\includegraphics[scale=.31]{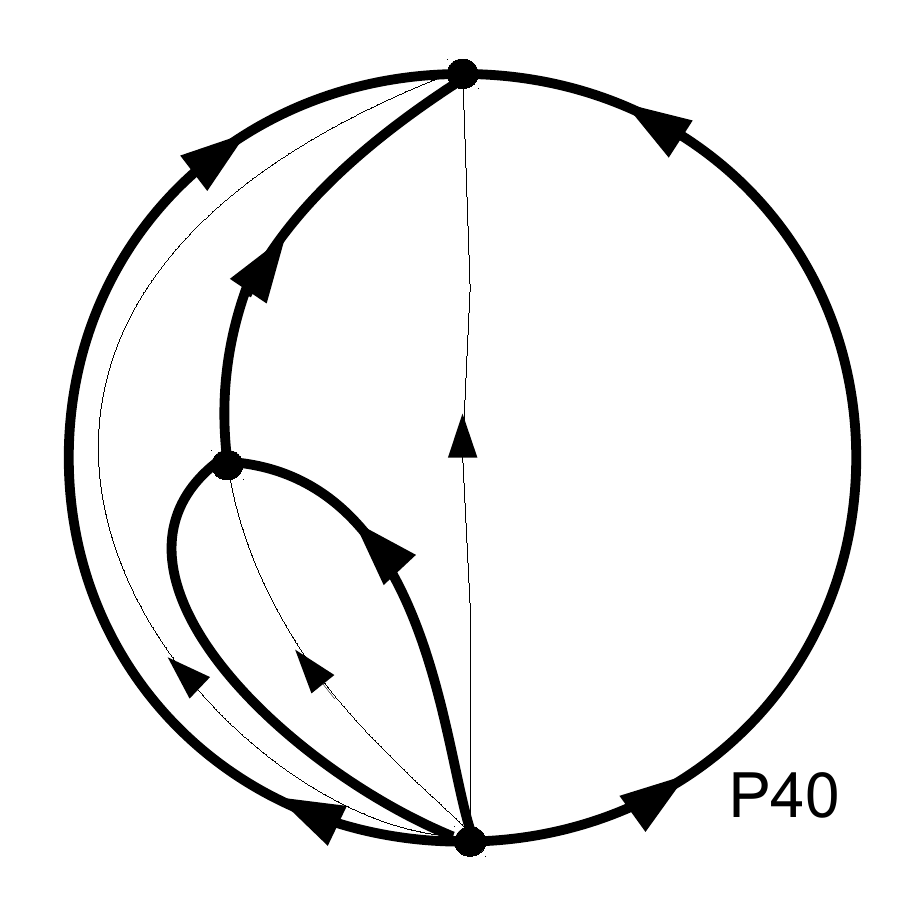}\end{minipage}

    \begin{minipage}[t]{2.7cm} \psfrag{a}{$a$}\centering\includegraphics[scale=.31]{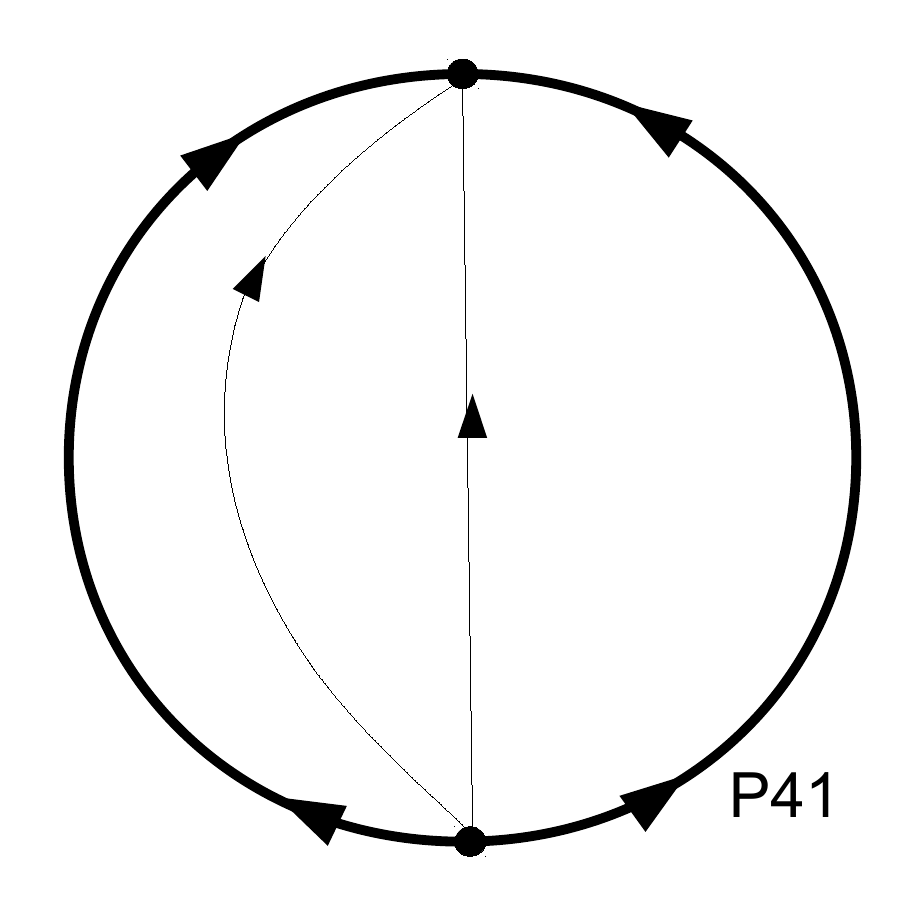}\end{minipage}
    \begin{minipage}[t]{2.7cm}\psfrag{b}{$b$}\centering\includegraphics[scale=.31]{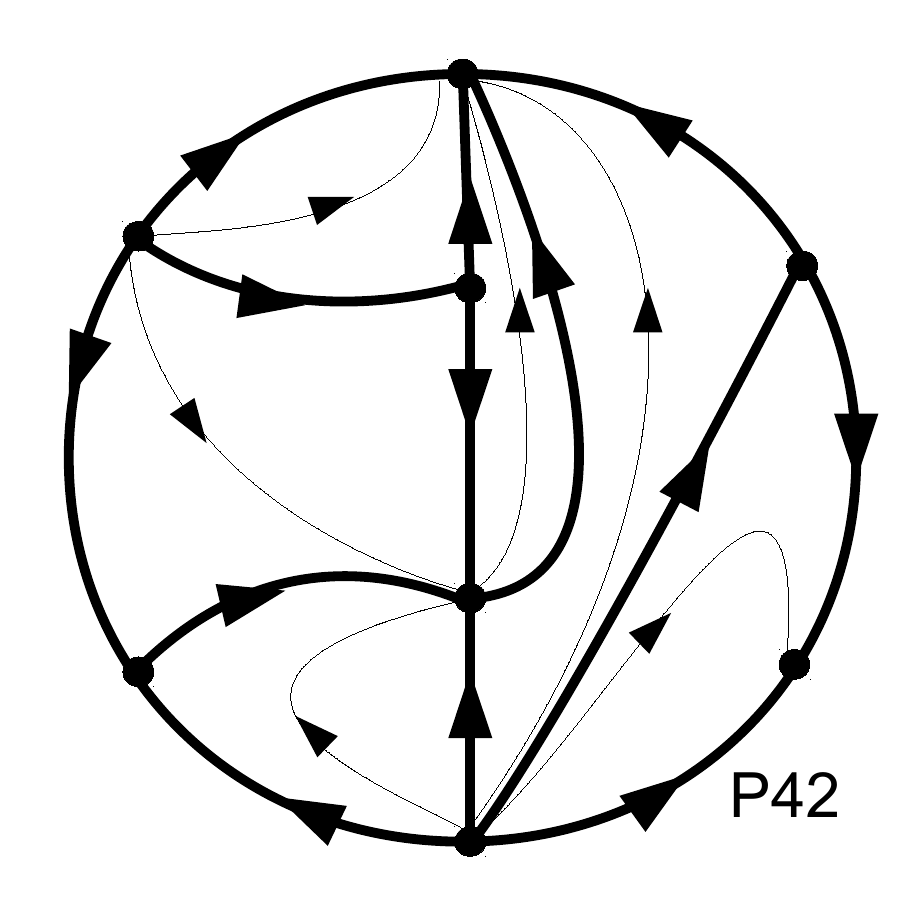}\end{minipage}
    \begin{minipage}[t]{2.7cm}\psfrag{c}{$c$}\centering\includegraphics[scale=.31]{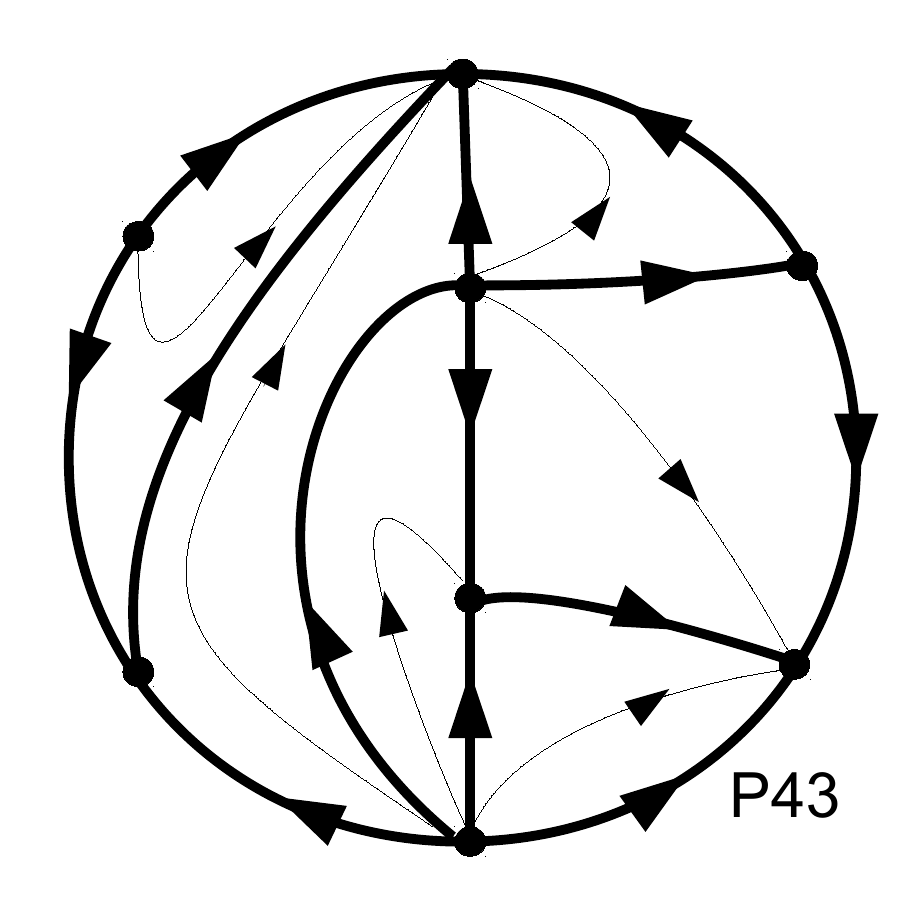}\end{minipage}
    \begin{minipage}[t]{2.7cm}\psfrag{d}{$d$}\centering\includegraphics[scale=.31]{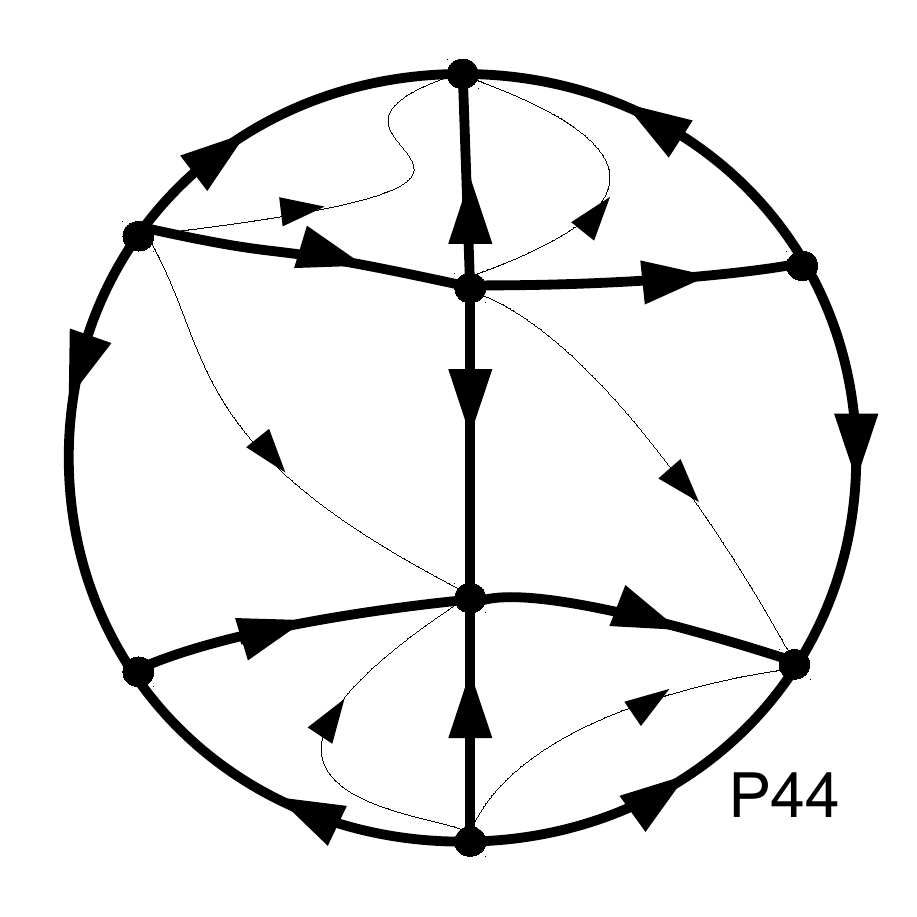}\end{minipage}

    \begin{minipage}[t]{2.7cm} \psfrag{a}{$a$}\centering\includegraphics[scale=.31]{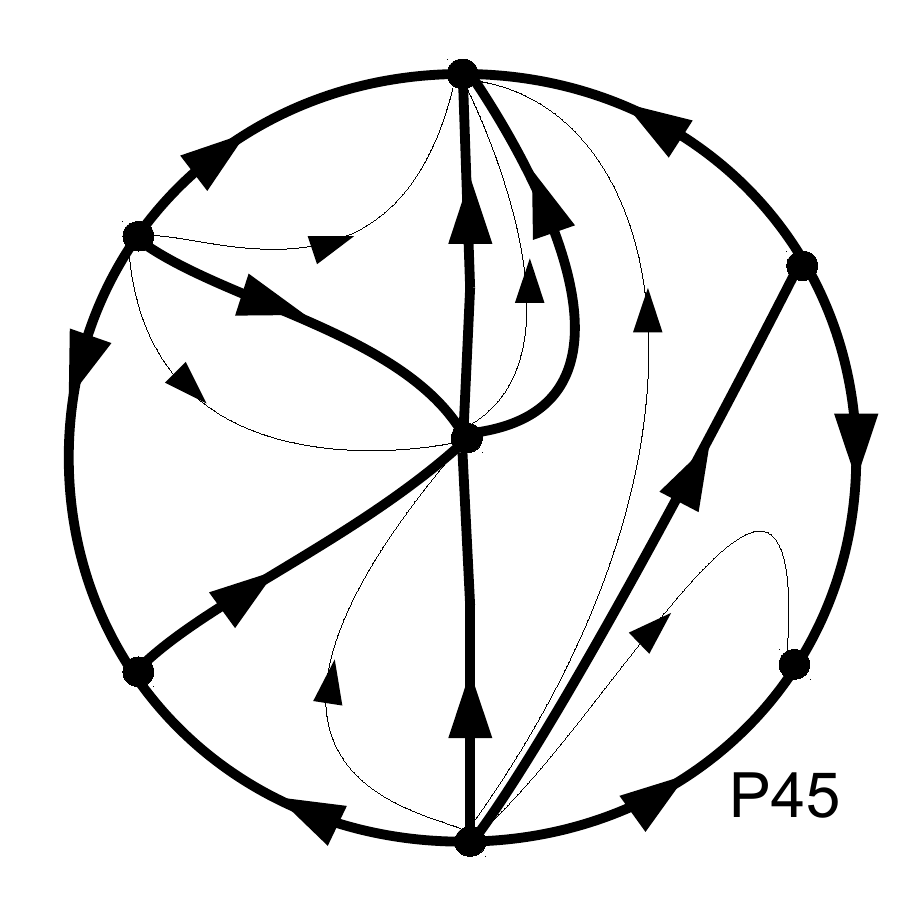}\end{minipage}
    \begin{minipage}[t]{2.7cm}\psfrag{b}{$b$}\centering\includegraphics[scale=.31]{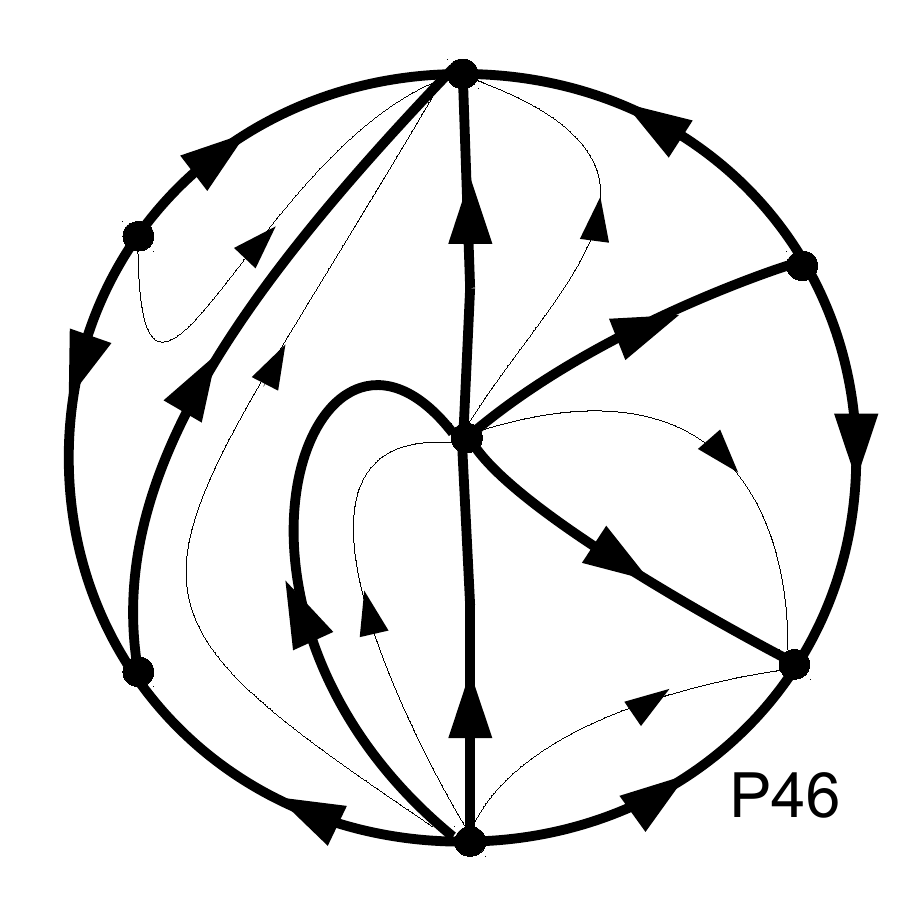}\end{minipage}
    \begin{minipage}[t]{2.7cm}\psfrag{c}{$c$}\centering\includegraphics[scale=.31]{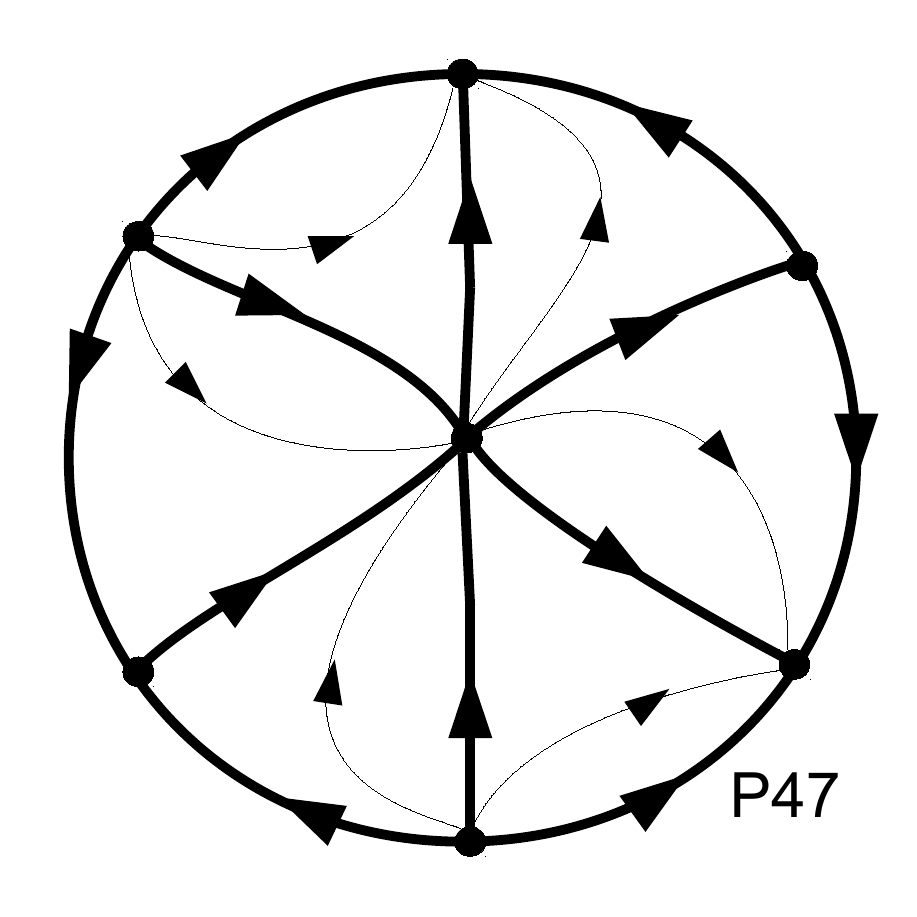}\end{minipage}
    \begin{minipage}[t]{2.7cm}\psfrag{d}{$d$}\centering\includegraphics[scale=.31]{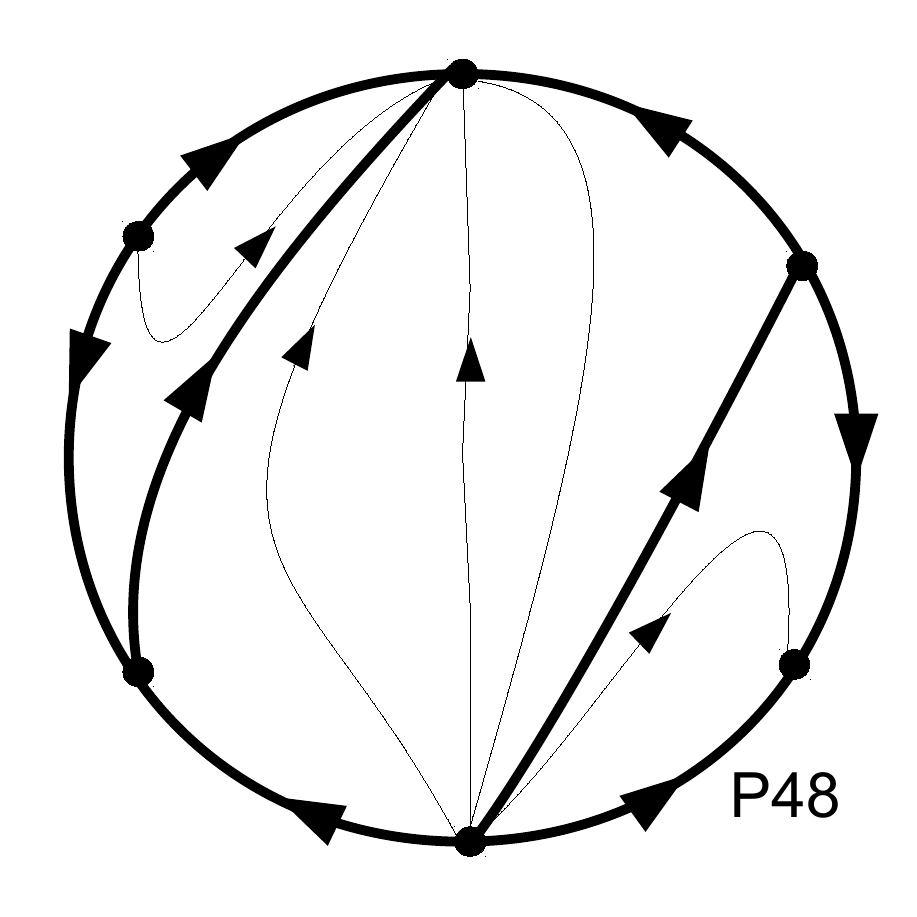}\end{minipage}

    \begin{minipage}[t]{2.7cm} \psfrag{a}{$a$}\centering\includegraphics[scale=.31]{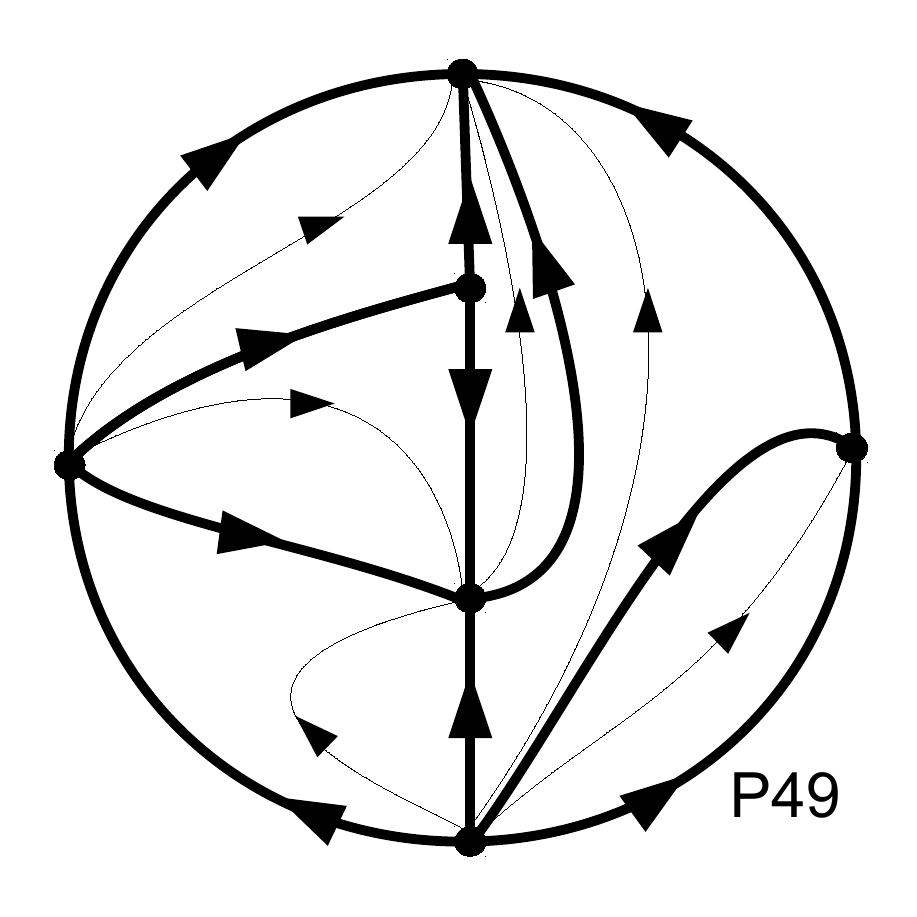}\end{minipage}
    \begin{minipage}[t]{2.7cm}\psfrag{b}{$b$}\centering\includegraphics[scale=.31]{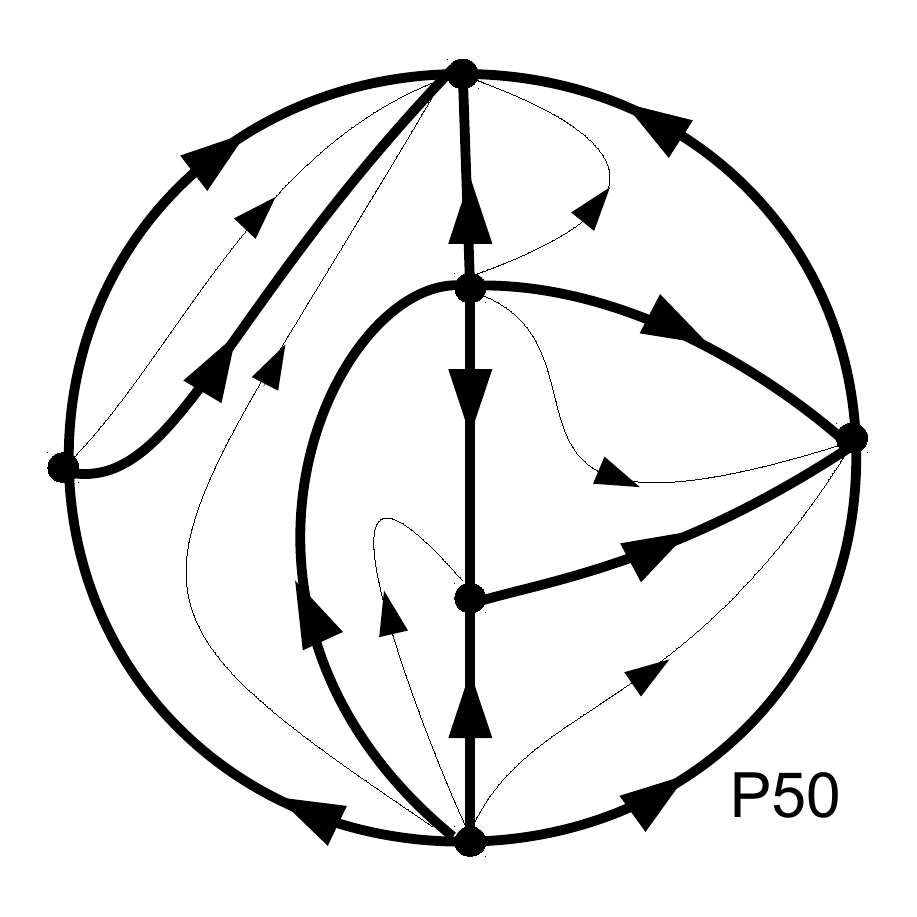}\end{minipage}
    \begin{minipage}[t]{2.7cm}\psfrag{c}{$c$}\centering\includegraphics[scale=.31]{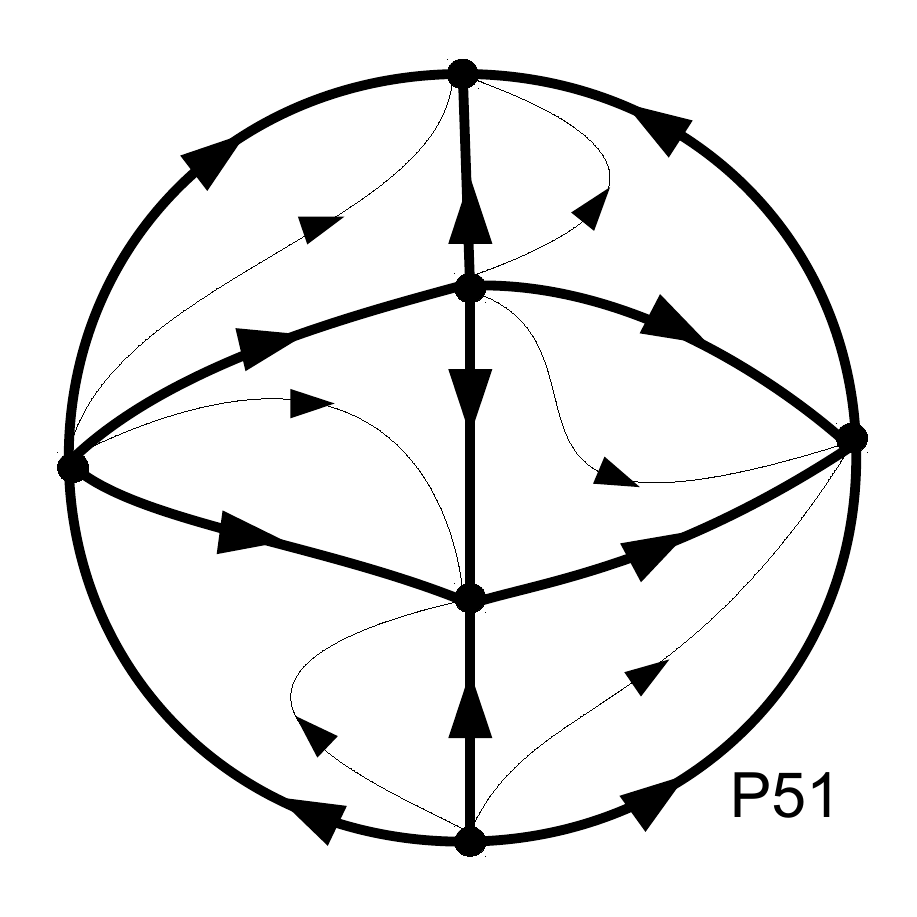}\end{minipage}
    \begin{minipage}[t]{2.7cm}\psfrag{d}{$d$}\centering\includegraphics[scale=.31]{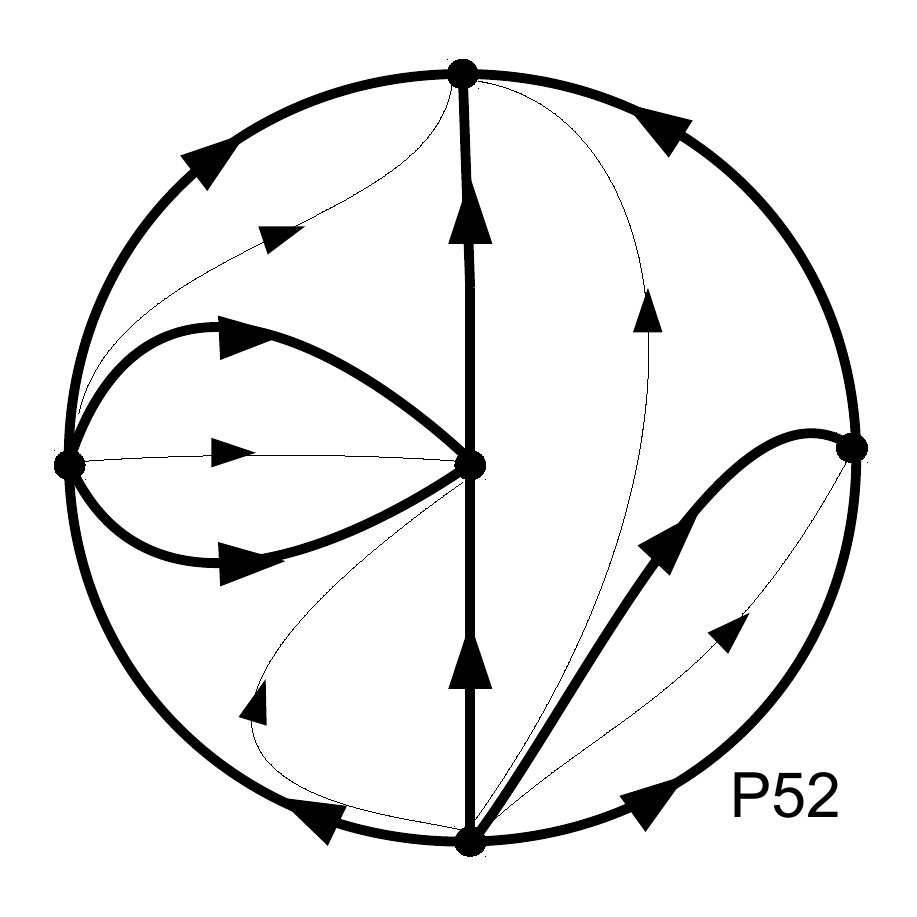}\end{minipage}

    \begin{minipage}[t]{2.7cm} \psfrag{a}{$a$}\centering\includegraphics[scale=.31]{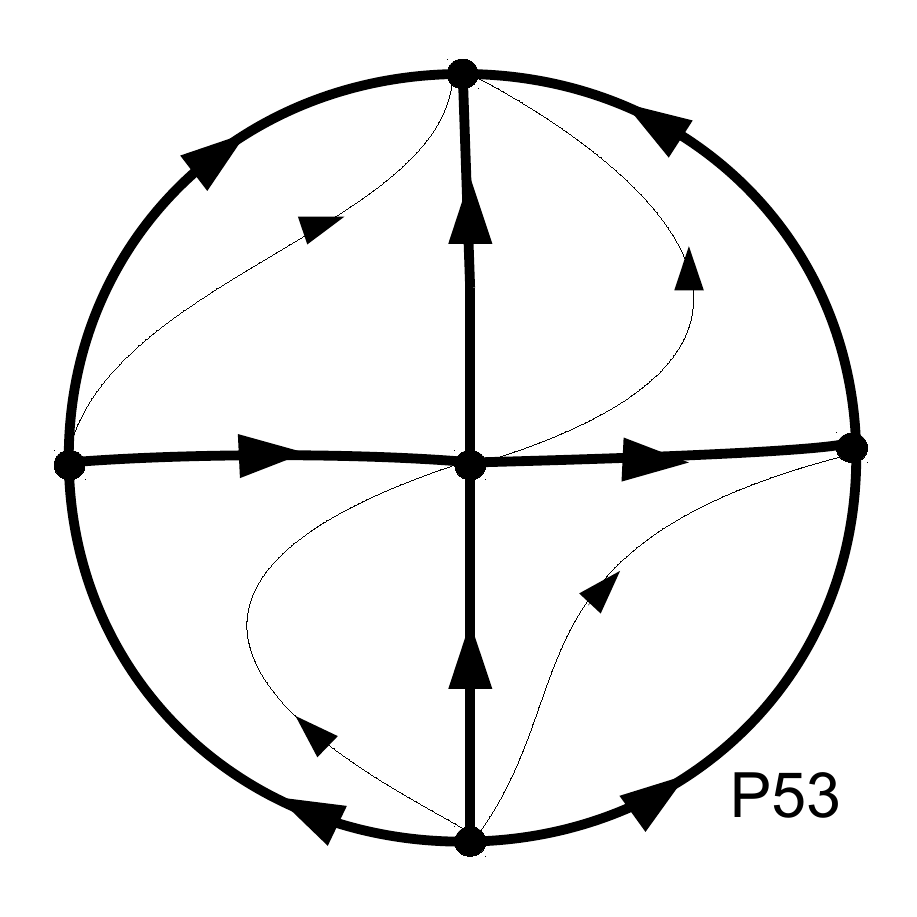}\end{minipage}
    \begin{minipage}[t]{2.7cm}\psfrag{b}{$b$}\centering\includegraphics[scale=.31]{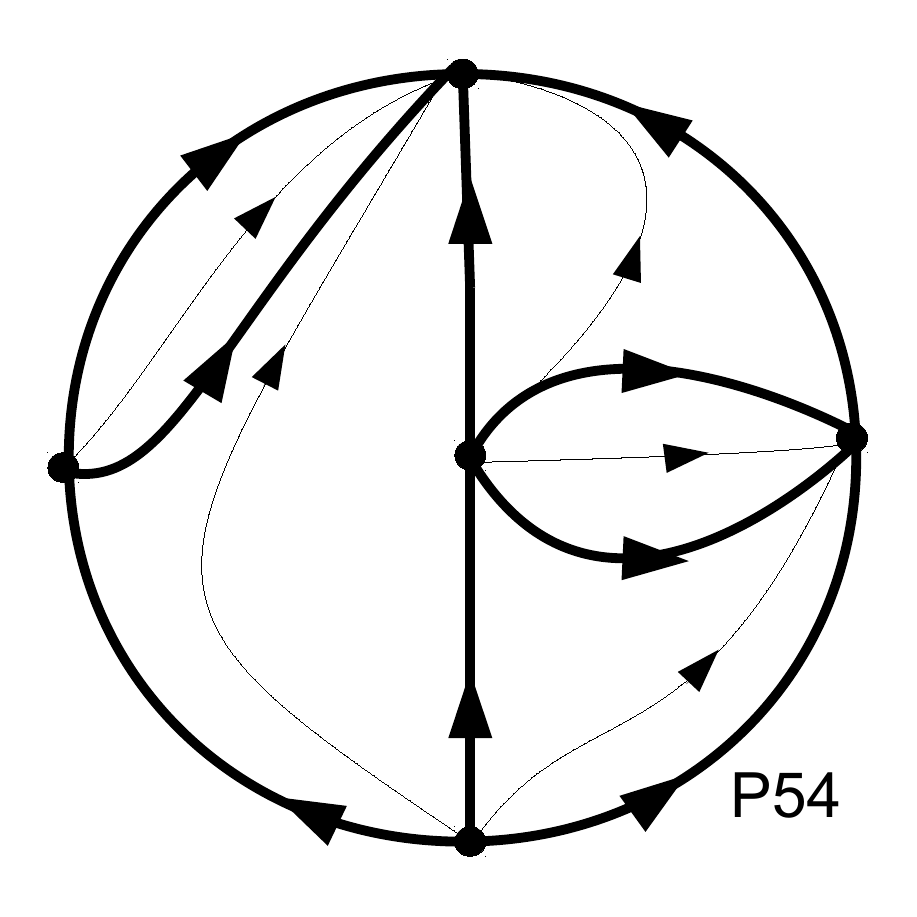}\end{minipage}
    \begin{minipage}[t]{2.7cm}\psfrag{c}{$c$}\centering\includegraphics[scale=.31]{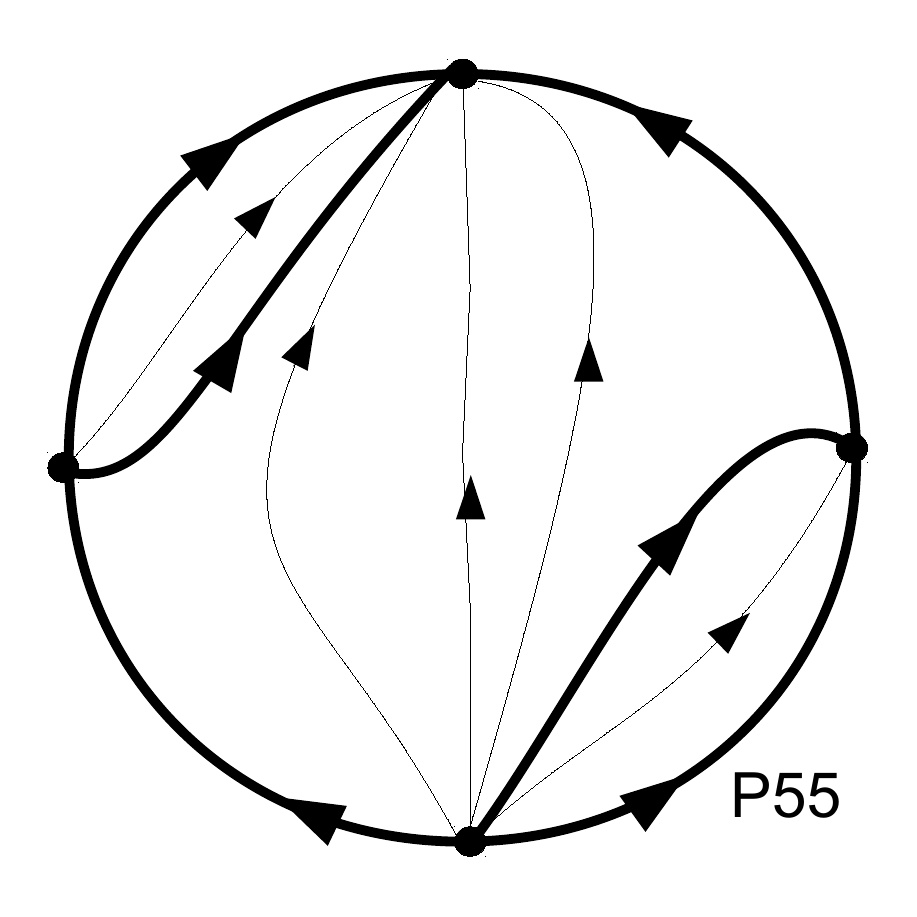}\end{minipage}
    \begin{minipage}[t]{2.7cm}\psfrag{d}{$d$}\centering\includegraphics[scale=.31]{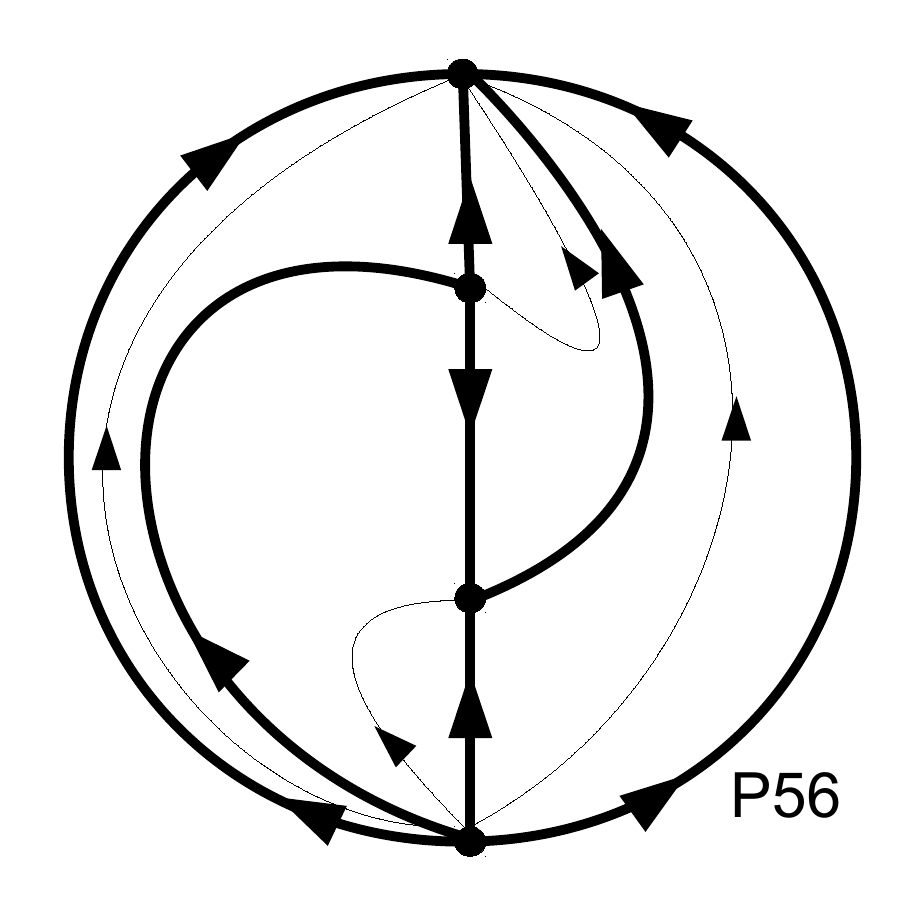}\end{minipage}

    \caption{\small Continuation of phase portraits of systems (1) in
        the Poicar\'e disk.}\label{figura1-3}
\end{figure}
\newpage

\begin{figure}
    \psfrag{A1}{A}

    \begin{minipage}[t]{2.7cm} \psfrag{a}{$a$}\centering\includegraphics[scale=.31]{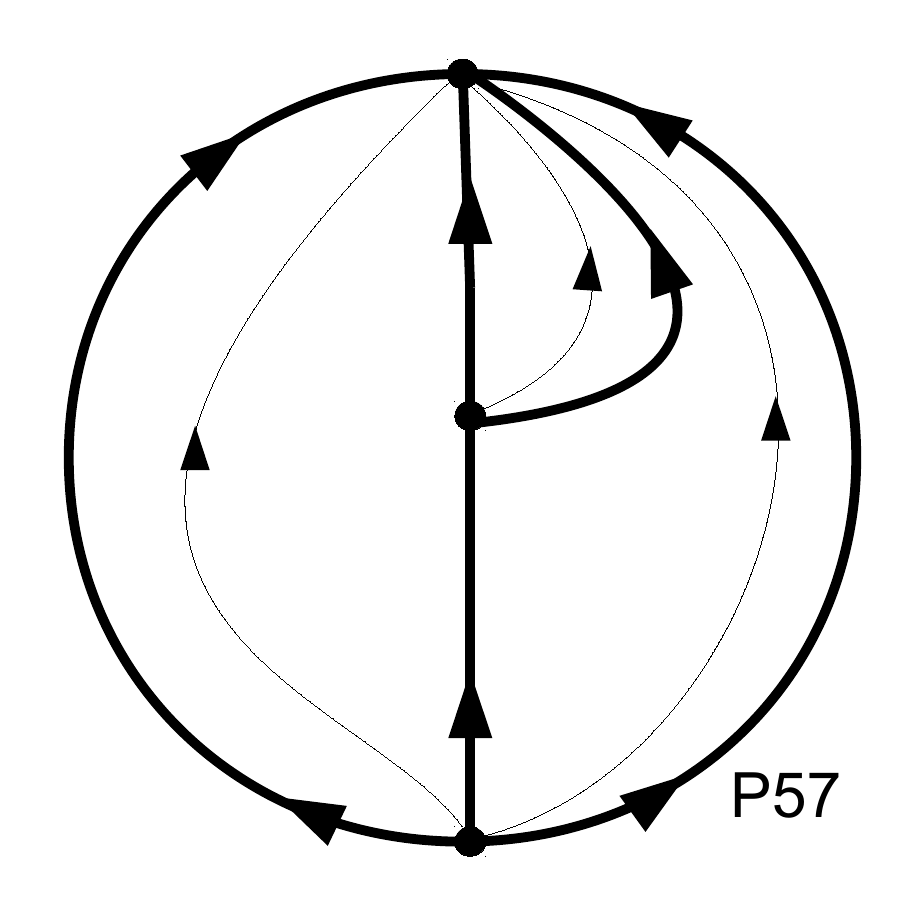}\end{minipage}
    \begin{minipage}[t]{2.7cm}\psfrag{b}{$b$}\centering\includegraphics[scale=.31]{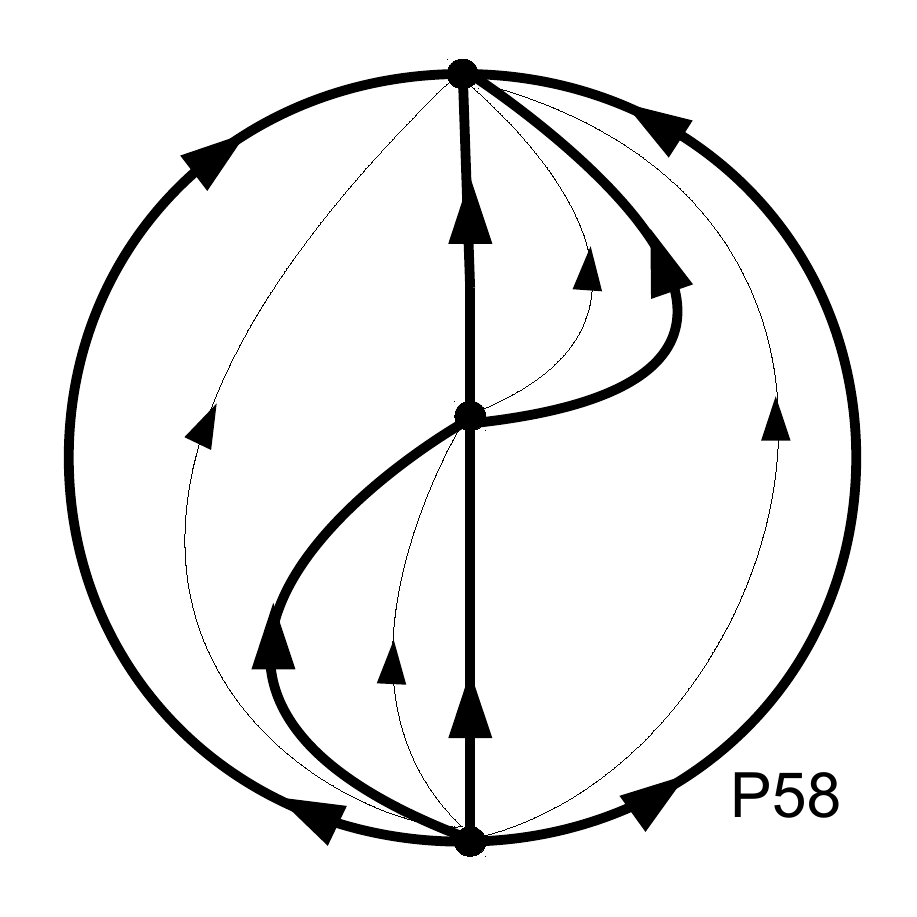}\end{minipage}
    \begin{minipage}[t]{2.7cm}\psfrag{c}{$c$}\centering\includegraphics[scale=.31]{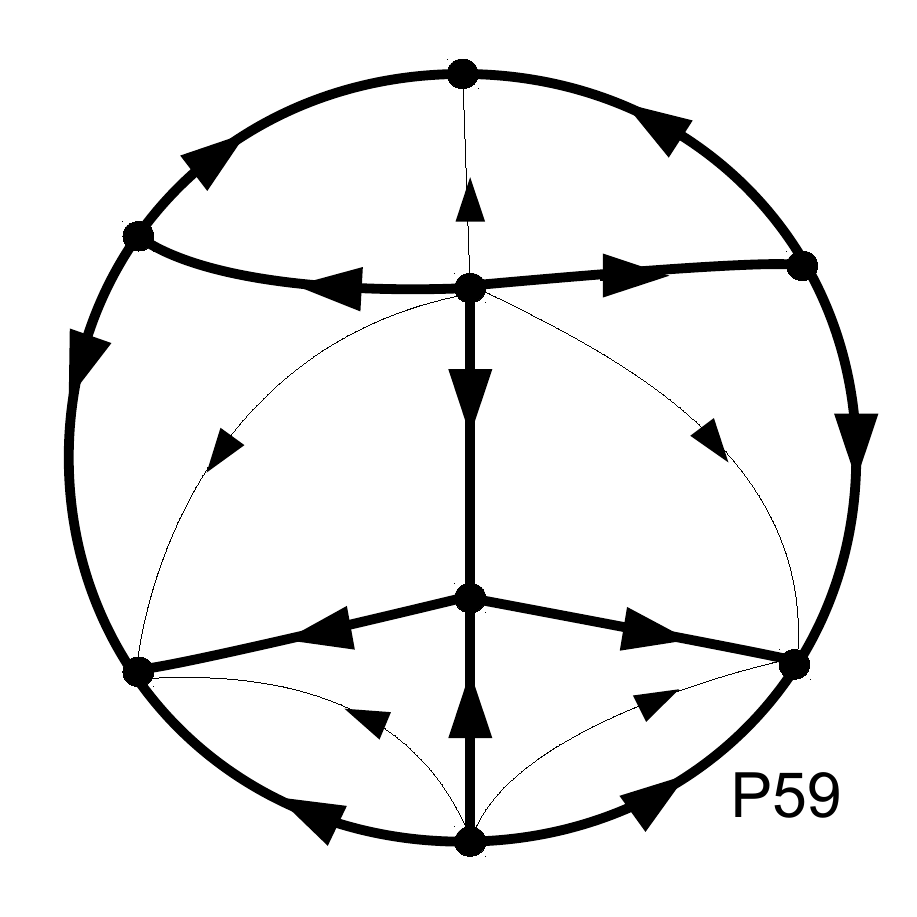}\end{minipage}
    \begin{minipage}[t]{2.7cm}\psfrag{d}{$d$}\centering\includegraphics[scale=.31]{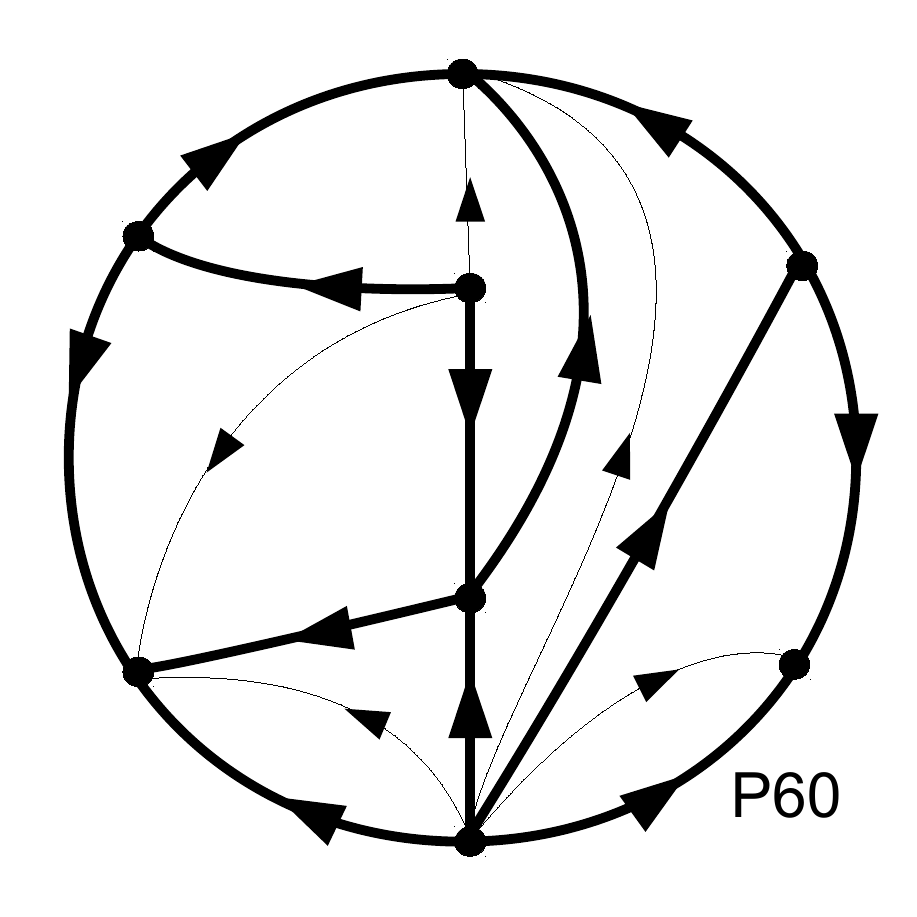}\end{minipage}

    \begin{minipage}[t]{2.7cm} \psfrag{a}{$a$}\centering\includegraphics[scale=.31]{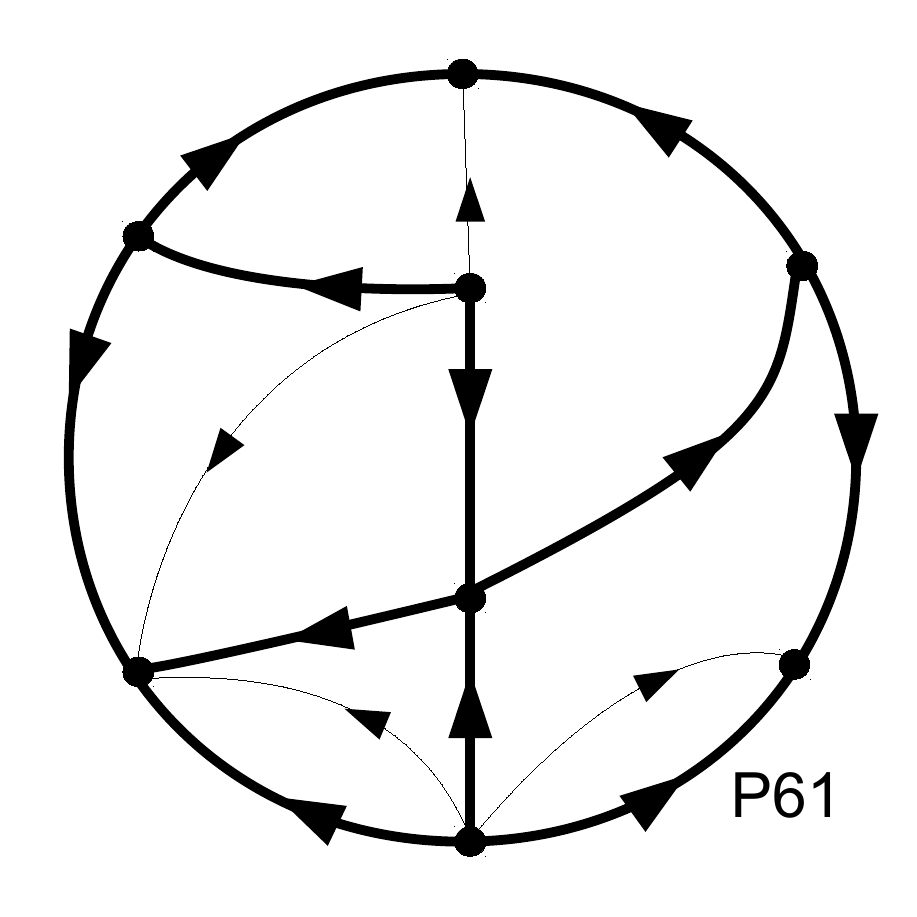}\end{minipage}
    \begin{minipage}[t]{2.7cm}\psfrag{b}{$b$}\centering\includegraphics[scale=.31]{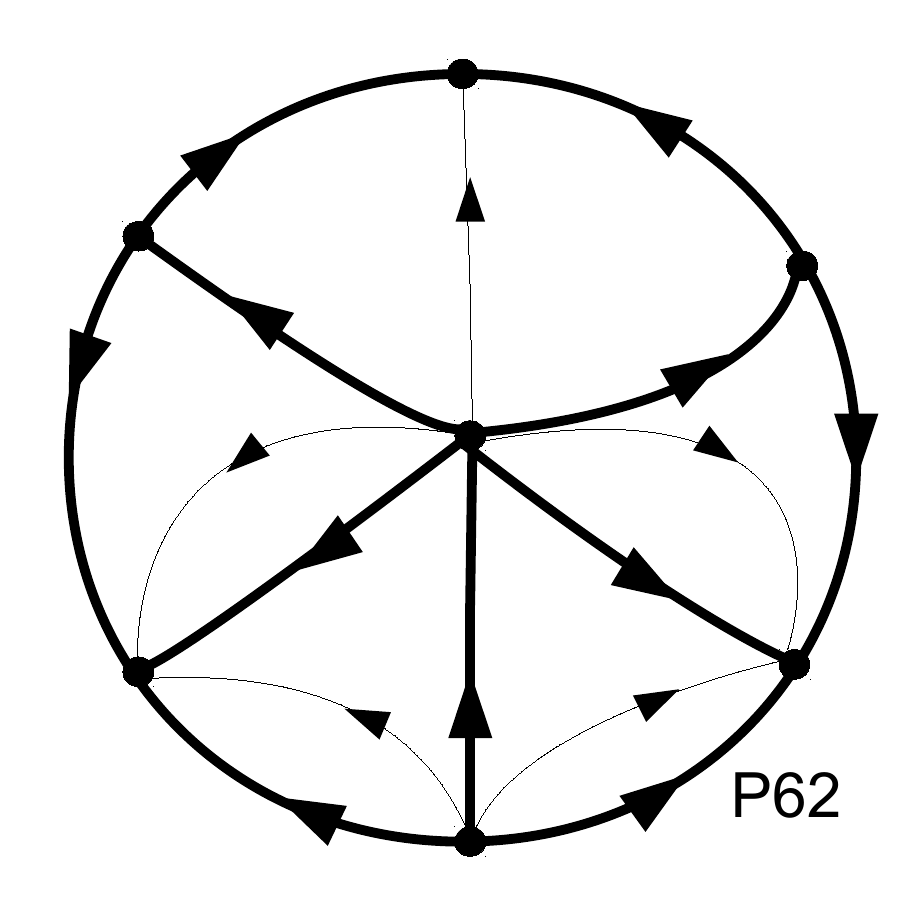}\end{minipage}
    \begin{minipage}[t]{2.7cm}\psfrag{c}{$c$}\centering\includegraphics[scale=.31]{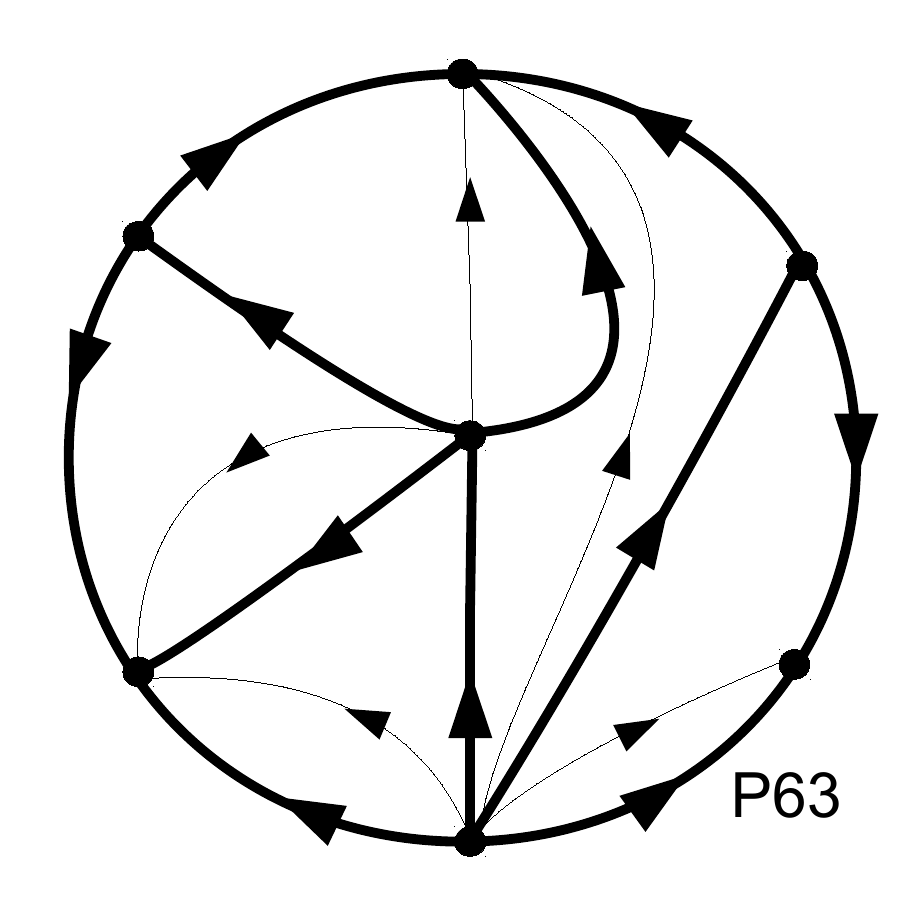}\end{minipage}
    \begin{minipage}[t]{2.7cm}\psfrag{d}{$d$}\centering\includegraphics[scale=.31]{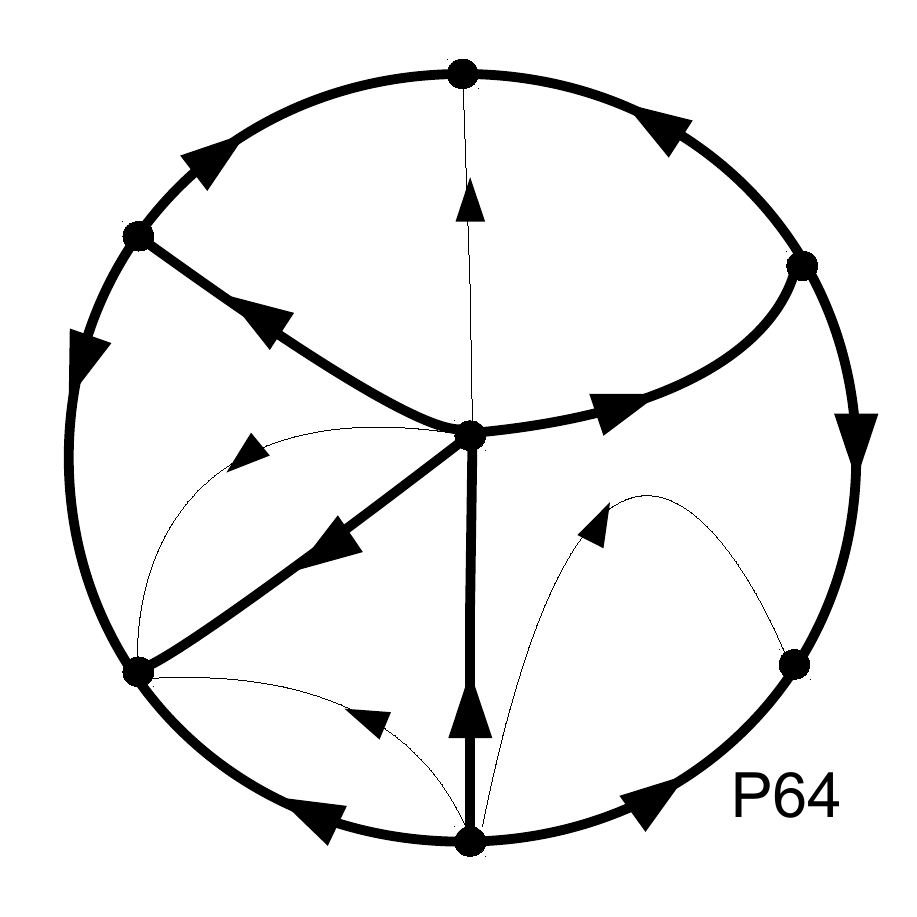}\end{minipage}

    \begin{minipage}[t]{2.7cm} \psfrag{a}{$a$}\centering\includegraphics[scale=.31]{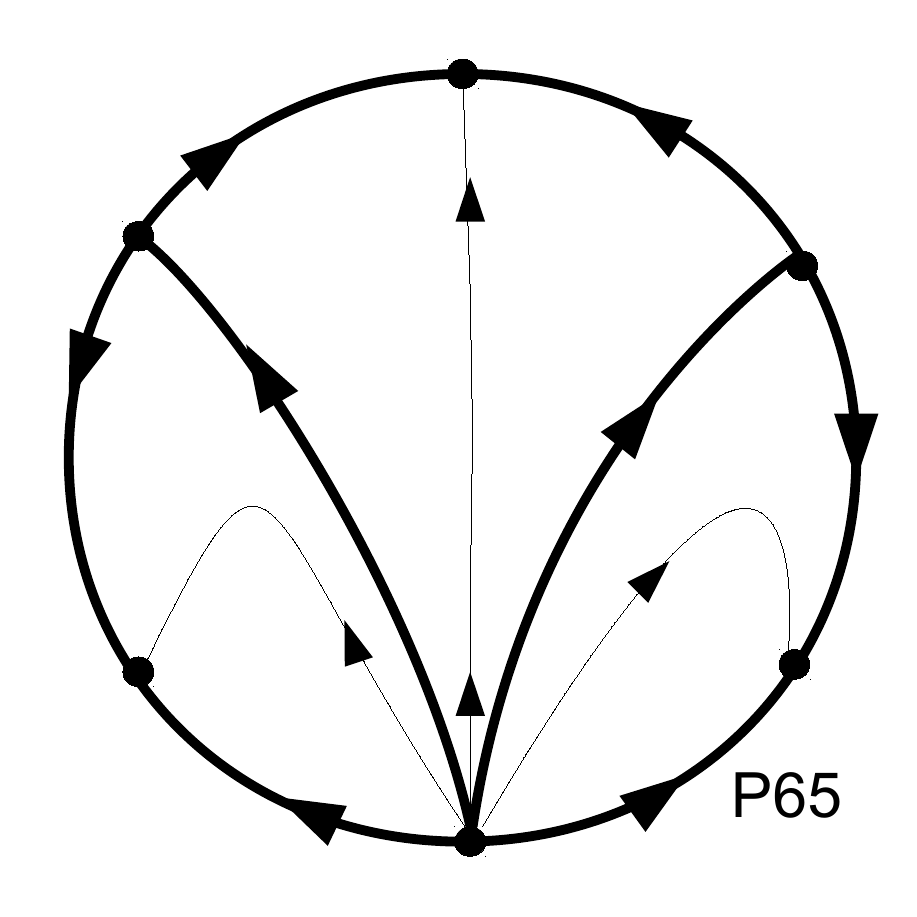}\end{minipage}
    \begin{minipage}[t]{2.7cm}\psfrag{b}{$b$}\centering\includegraphics[scale=.31]{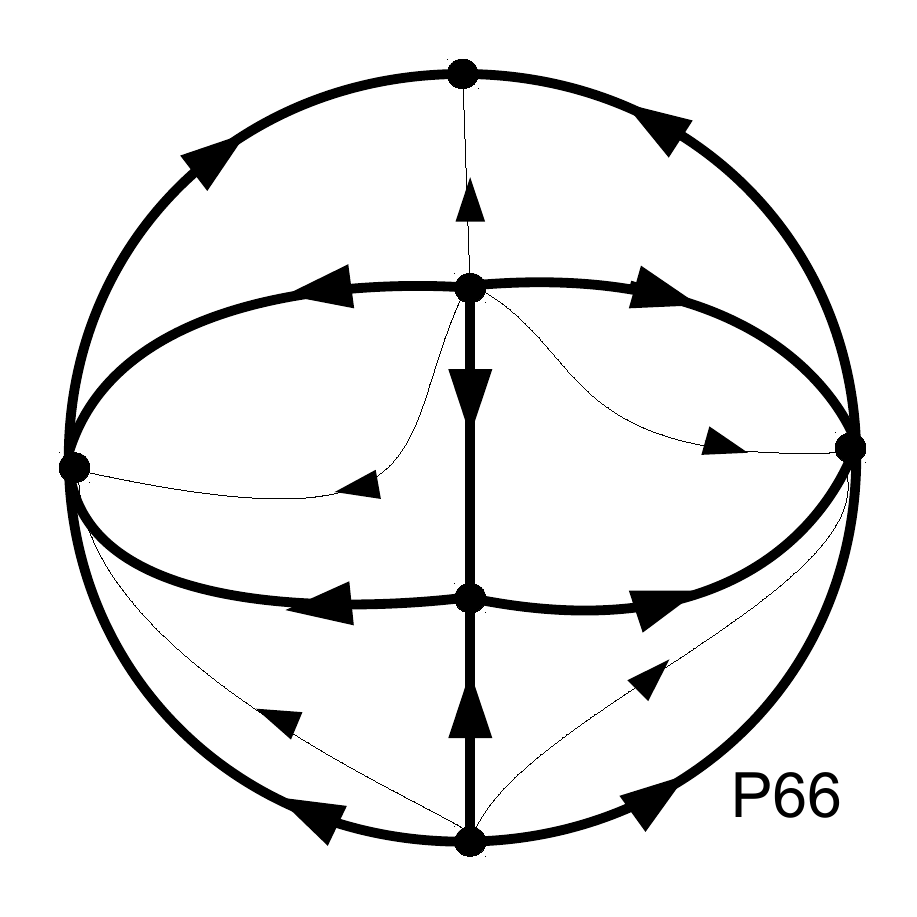}\end{minipage}
    \begin{minipage}[t]{2.7cm}\psfrag{c}{$c$}\centering\includegraphics[scale=.31]{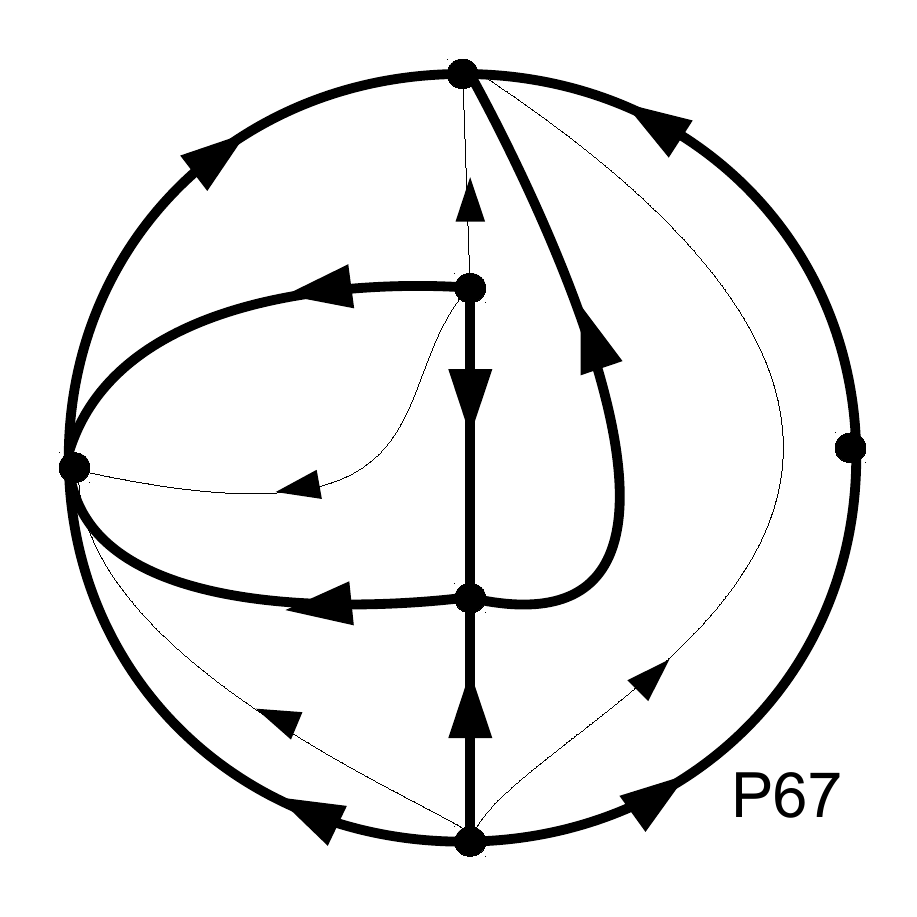}\end{minipage}
    \begin{minipage}[t]{2.7cm}\psfrag{d}{$d$}\centering\includegraphics[scale=.31]{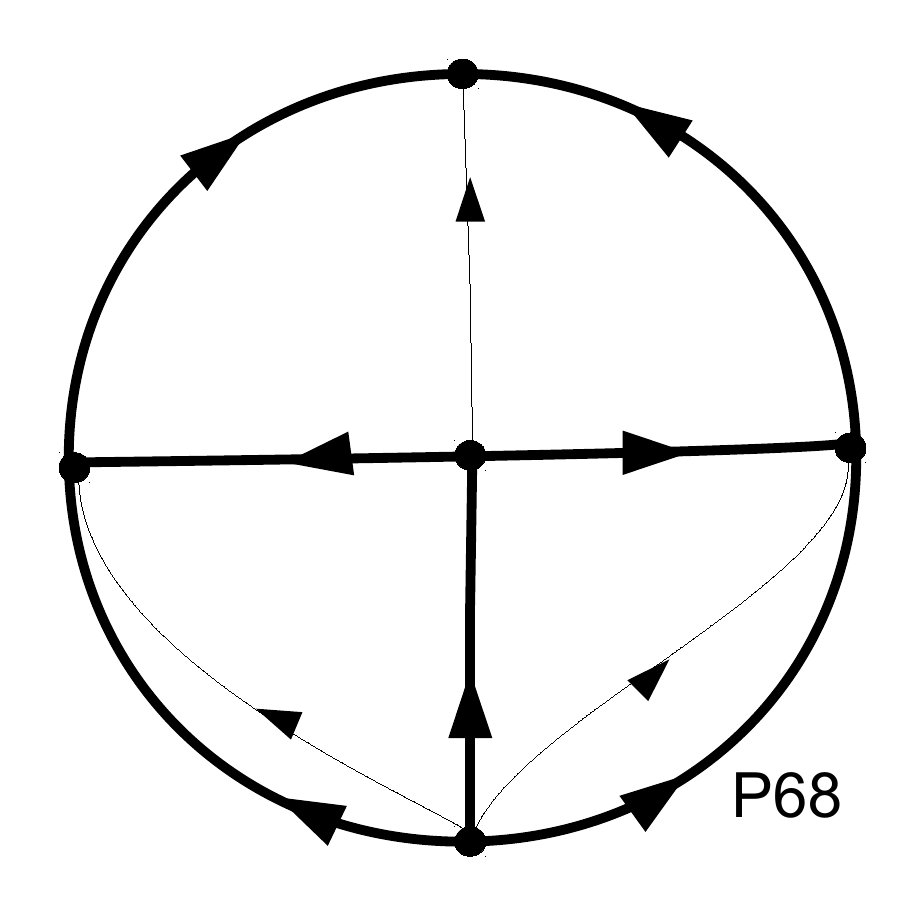}\end{minipage}

    \begin{minipage}[t]{2.7cm} \psfrag{a}{$a$}\centering\includegraphics[scale=.31]{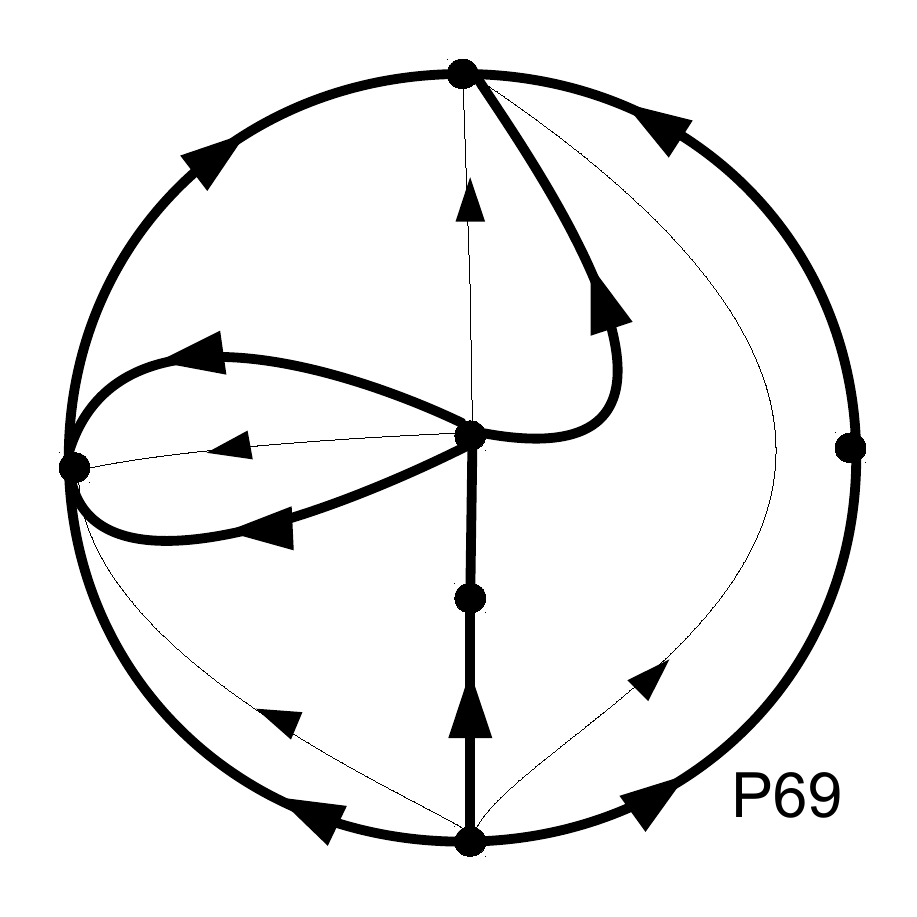}\end{minipage}
    \begin{minipage}[t]{2.7cm}\psfrag{b}{$b$}\centering\includegraphics[scale=.31]{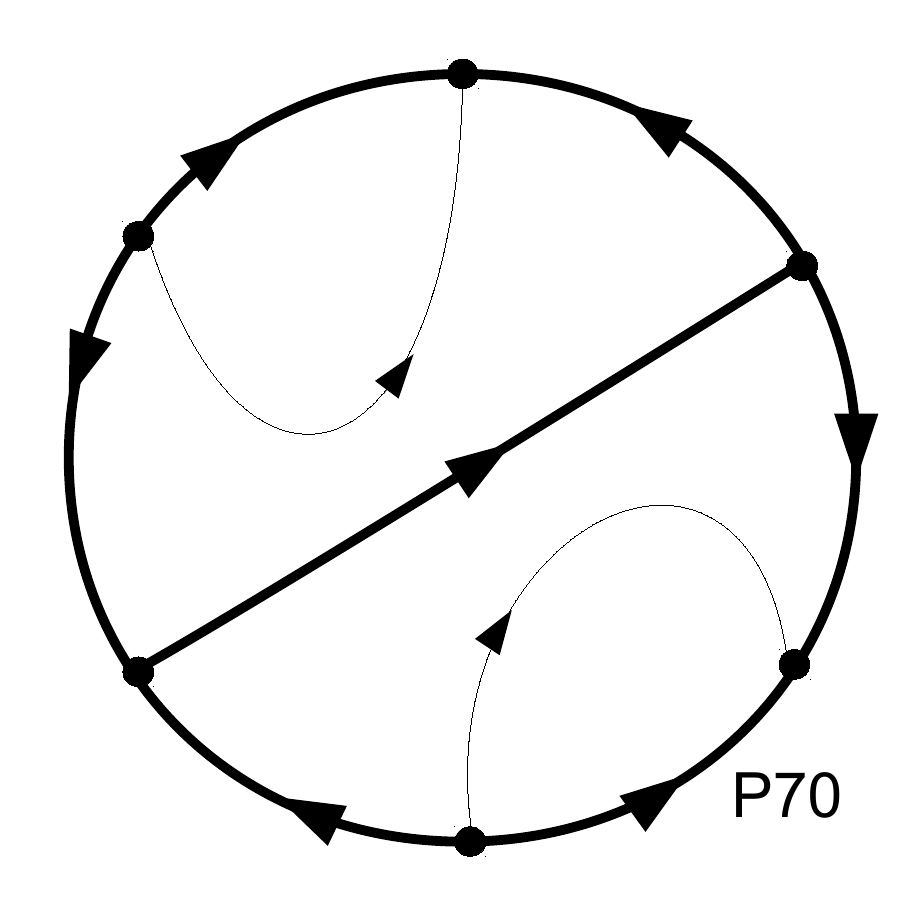}\end{minipage}
    \begin{minipage}[t]{2.7cm}\psfrag{c}{$c$}\centering\includegraphics[scale=.31]{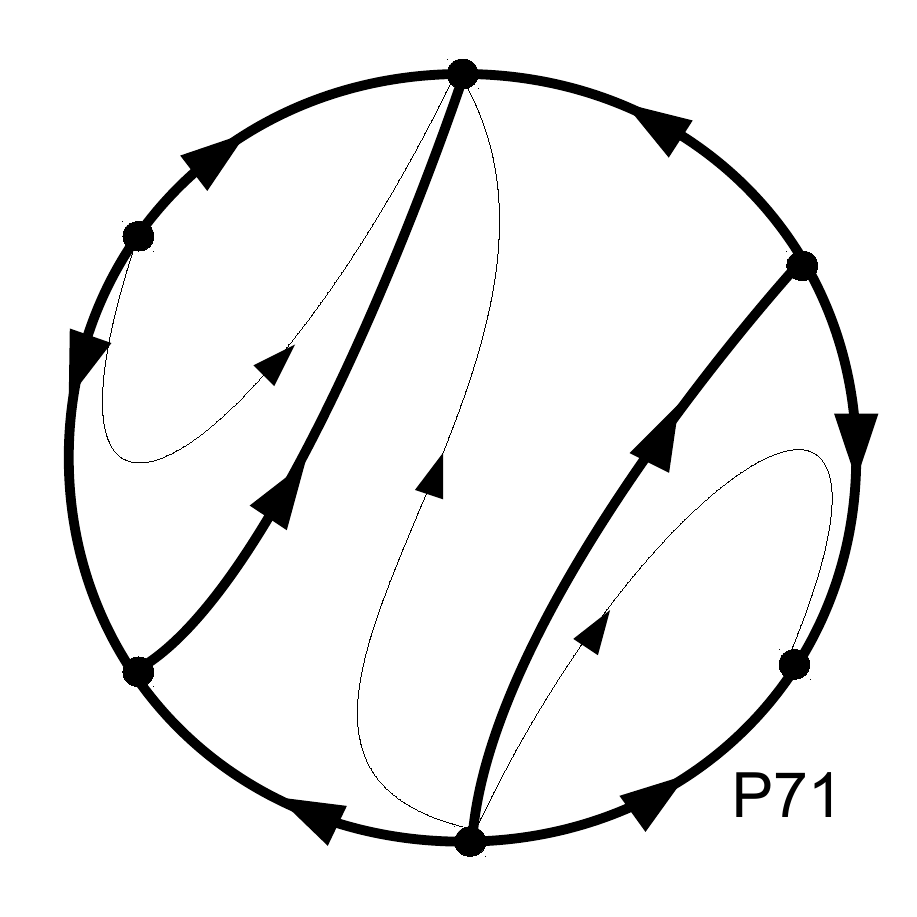}\end{minipage}
    \begin{minipage}[t]{2.7cm}\psfrag{d}{$d$}\centering\includegraphics[scale=.31]{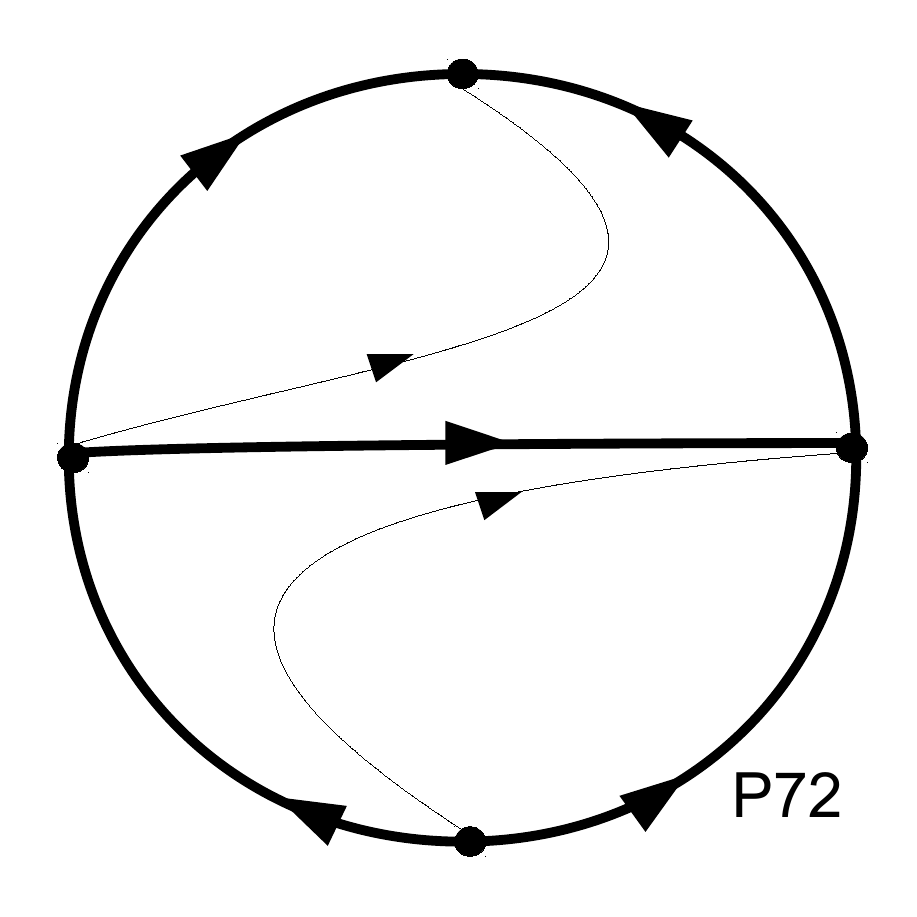}\end{minipage}

    \begin{minipage}[t]{2.7cm} \psfrag{a}{$a$}\centering\includegraphics[scale=.31]{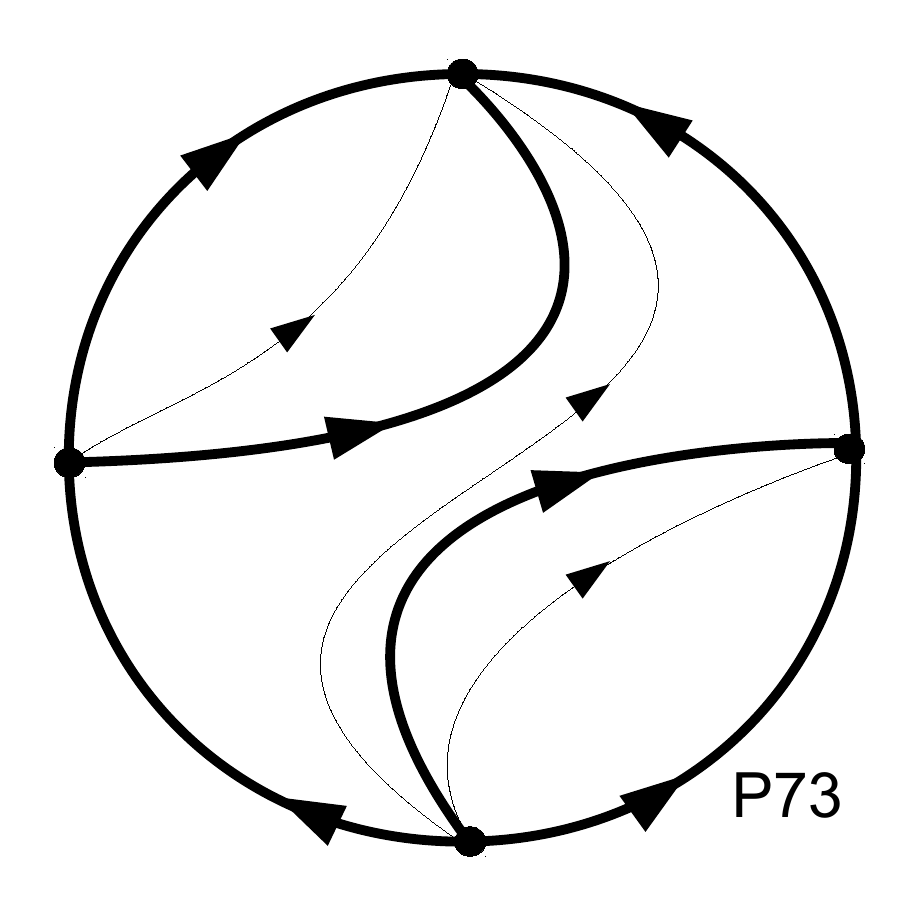}\end{minipage}
    \begin{minipage}[t]{2.7cm}\psfrag{b}{$b$}\centering\includegraphics[scale=.31]{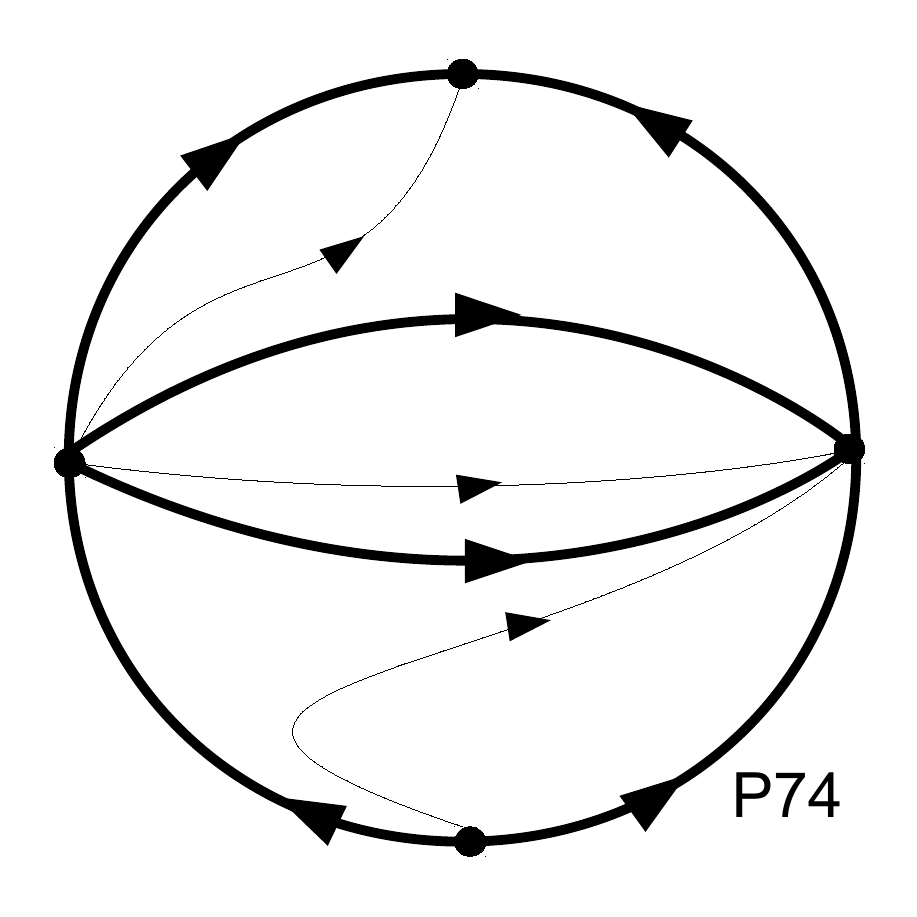}\end{minipage}

    \caption{\small Continuation of the phase portraits of systems (1) in
        the Poicar\'e disk.}\label{figura1-4}
\end{figure}

\newpage

\section{Finite equilibrium points}\label{s2}

We start this section with the proof of Proposition \ref{p0}.\\

\noindent\textit {Proof of Proposition \ref{p0}}. Since $ \a_2(x)$ is a polynomial of degree at most $2$, we have,
using a rescaling of the time if necessary,
\[\begin{array}{ll}
\dot{x}=(x-r)(x-s) \quad \mbox{with} \quad r\neq s,\\\dot{x}=(x-r)^2, \\ \dot{x}=(x-r), \\
\dot{x}=1, \\ \dot{x}=(x-r)^2+s^2 \quad \mbox{with} \quad s\neq 0.\end{array}\]

If $\dot{x}=(x-r)(x-s)$, $r\neq s$,  considering the change of coordinates
 $$x_1=\frac{x-r}{r-s},\quad y_1=cy \quad \mbox{and}\quad T=(r-s)t,$$
we get a system $(i)$. If $\dot{x}=(x-r)^n$, $n=1,2,$   considering the change of coordinates
 $x_1=x-r,\quad y_1=cy, $ we get systems $(ii)$ for $n=2$ and systems $(iii)$ for $n=1$.
If $\dot{x}=1$,   considering the change of coordinates
$x_1=x$ and $y_1=cy,$
we get a systems $(iv)$.
If $\dot{x}=(x-r)^2+s^2$,  considering the change of coordinates
$x_1=(x-r)/ s$, $y_1=cy$ and $T=st$,
we get a system $(v)$.\bbox

\begin{proposition}\label{p1}
The finite equilibrium points of  the Riccati quadratic polynomial differential system \eqref{eq1} are described below.
\begin{itemize}
\item [(a)]  Systems (i) have at most $4$ equilibria which can be either a saddle, or a stable or an unstable node, or a saddle--node.
\item [(b)]  Systems (ii) have at most $2$ equilibria which can be either a saddle--node either semi--hyperbolic  or nilpotent.
\item [(c)]  Systems (iii) have at most $2$ equilibria which can be either a saddle or an unstable node, or a saddle-node.
\item [(d)]  Systems (iv) and (v) have  no finite equilibria.
\end{itemize}
\end{proposition}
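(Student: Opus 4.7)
My plan is to exploit the lower-triangular structure of the Jacobian of each normal form. In every system (i)--(v) the equation $\dot{x}=\a_2(x)$ depends only on $x$, so at an equilibrium $(x_0,y_0)$ the Jacobian is
$$J(x_0,y_0)=\begin{pmatrix} \a_2'(x_0) & 0 \\ ay_0+2cx_0+d & 2y_0+ax_0+b\end{pmatrix},$$
whose eigenvalues are the diagonal entries $\lambda_1=\a_2'(x_0)$ and $\lambda_2=2y_0+ax_0+b$. Each case then reduces either to a hyperbolic reading of signs or to a standard local normal-form theorem.

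First I would count the equilibria by solving $\a_2(x_0)=0$ and then, for each root, the quadratic $y_0^2+\b_1(x_0)y_0+\g_2(x_0)=0$. In (i) we have $x_0\in\{0,-1\}$, and the $y$-discriminants at these two roots are precisely $\Delta_{F_1}$ and $\Delta_{F_2}$, giving at most four equilibria. In (ii) and (iii) only $x_0=0$ is admissible, producing at most two equilibria controlled by $\Delta_{F_1}$. For (iv) and (v) the polynomial $\a_2$ has no real roots, so there are no finite equilibria, which settles part (d).

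Next I would classify the equilibria using the two eigenvalues. In (i), $\a_2'(x_0)=2x_0+1=\pm 1$, so $\lambda_1\neq 0$ throughout: when $\Delta_{F_j}>0$ both eigenvalues are real and non-zero, yielding a saddle or a node of the appropriate stability by sign inspection, and when $\Delta_{F_j}=0$ the point is semi-hyperbolic, so Theorem 2.19 of \cite{DLA} applied along the invariant line $x=x_0$ (which contains the center manifold) identifies it as a saddle--node. System (iii) is treated identically with $\lambda_1=1$. In (ii), $\a_2'(0)=0$, so $\lambda_1$ vanishes at every equilibrium: if $\Delta_{F_1}\neq 0$ the point is semi-hyperbolic and, since the flow on the center manifold is governed by $\dot{x}=x^2$ (an even-order zero), it is again a saddle--node; if $\Delta_{F_1}=0$ both eigenvalues vanish and the point is nilpotent, classified by Theorem 3.5 of \cite{DLA}.

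The main obstacle is the nilpotent case in (ii), where one must verify that the sub-principal entry $-ab/2+d$ of the Jacobian (and, in the sub-case when it also vanishes, the higher-order terms along the invariant line $x=0$) fits the hypotheses of the nilpotent normal-form theorem, so that the local sectorial decomposition is unambiguous. Everything else is a bookkeeping exercise once the equilibria are listed and the diagonal eigenvalues are computed, because their signs line up directly with the discriminants $\Delta_{F_1}$ and $\Delta_{F_2}$ defined in \eqref{deltas}.
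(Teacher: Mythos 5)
Your proof follows essentially the same route as the paper: solve $\a_2(x_0)=0$ and then the quadratic in $y$ to count the equilibria, read the eigenvalues off the lower-triangular Jacobian, and classify via the hyperbolic/semi-hyperbolic/nilpotent theorems of \cite{DLA}, with the discriminants $\Delta_{F_1}$, $\Delta_{F_2}$ governing the case distinctions exactly as in Tables 1--5. If anything you are more explicit than the paper about the center-manifold computations and about the sub-case of (ii) where the linear part vanishes identically (e.g.\ $a=b=d=e=0$, $c\neq 0$), which the paper silently subsumes under ``nilpotent saddle--node.''
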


\begin{proof}

\noindent\textit {Systems (i):} Consider the Riccati quadratic polynomial differential systems
\begin{equation}\label{sisf1}
\dot{x} = x(x+1), \quad  \dot{y}= y^2+(ax+b) y + c x^2+ d x + e.
\end{equation}

The equilibrium  points of system \eqref{sisf1} are
\begin{center}$(x_1,y_1)=\left (0,-\dfrac{b +\sqrt{\Delta_{F_1}}}{2 }\right )$, $(x_2,y_2)=\left(0,-\dfrac{b -\sqrt{\Delta_{F_1}}}{2 }\right),$
\end{center}
\begin{center}$(x_3,y_3)=\left(-1,-\dfrac{-a+b +\sqrt{\Delta_{F_2}}}{2 }\right)$, $(x_4,y_4)=\left(-1,-\dfrac{-a+b -\sqrt{\Delta_{F_2}}}{2 }\right),$
\end{center}
where $\Delta_{F_1} $ and $\Delta_{F_2}$ are given by \eqref{deltas}.

The eigenvalues of  the Jacobian matrix of system \eqref{sisf1}
evaluated at $(x_i, y_i)$  are $(1, (-1)^{i}\sqrt{\Delta_{F_1}})$
for  $i=1,2$, and $(-1, (-1)^{i}\sqrt{\Delta_{F_2}})$ for  $i=3,4$, when they exist.\\
From the classification of the hyperbolic and semi-hyperbolic equilibrium points (see for instance Theorems 2.18 and 2.19 of \cite{DLA}),
we have  the following (when the equilibrium point is not hyperbolic we mention this fact explicitly).
\begin{itemize}

 \item[(i)] If $\Delta_{F_1}>0$ and  $\Delta_{F_2}>0$, system \eqref{sisf1} has  two saddles,
a stable  node and an unstable node.

 \item[(ii)]If $\Delta_{F_1}>0$ and  $\Delta_{F_2}=0$, system \eqref{sisf1} has a saddle,
a stable node and a semi--hyperbolic saddle--node.

 \item[(iii)]If $\Delta_{F_1}>0$ and  $\Delta_{F_2}<0$, system \eqref{sisf1} has  a saddle and
a stable node.

 \item[(iv)]If $\Delta_{F_1}=0$ and  $\Delta_{F_2}>0$, system \eqref{sisf1} has a saddle,
an unstable node and a semi--hyperbolic saddle--node.

 \item[(v)]If $\Delta_{F_1}=0$ and  $\Delta_{F_2}=0$, system \eqref{sisf1} has two semi--hyperbolic saddle--nodes.

 \item[(vi)]If $\Delta_{F_1}=0$ and  $\Delta_{F_2}<0$ system \eqref{sisf1} has one semi--hyperbolic saddle--node.

 \item[(vii)]If $\Delta_{F_1}<0$ and  $\Delta_{F_2}>0$,  system \eqref{sisf1} has a saddle and
an unstable node.

 \item[(viii)]If $\Delta_{F_1}<0$ and  $\Delta_{F_2}=0$, system \eqref{sisf1} has one semi--hyperbolic saddle--node.

 \item[(ix)]If $\Delta_{F_1}<0$ and  $\Delta_{F_2}<0$, system \eqref{sisf1}  has not  equilibria.
\end{itemize}

\noindent\textit {Systems (ii):} Consider the  Riccati  quadratic polynomial differential systems
\begin{equation}\label{sisf2} \dot{x} = x^2,\quad \dot{y}=y^2+(a x+b) y + cx^2+ dx + e.\\
\end{equation}
We have that the finite equilibrium points of  system \eqref{sisf2} are
\begin{equation} \label{singii} (x_1,y_1)=\left (0,-\dfrac{b +\sqrt{\Delta_{F_1}}}{2 }\right ),  (x_2,y_2)=\left(0,-\dfrac{b -\sqrt{\Delta_{F_1}}}{2 }\right),
\end{equation}
where  $\Delta_{F_1} $ is given by \eqref{deltas}.

The eigenvalues of  the Jacobian matrix of system \eqref{sisf2}
evaluated at $(x_i, y_i)$ for all $i=1, 2$ are $0$ and $(-1)^{i}\sqrt{\Delta_{F_1}}$. Then we have\\

\begin{itemize}
 \item[(i)] If $\Delta_{F_1}>0$ systems \eqref{sisf2} have two semi--hyperbolic saddle--nodes.
   \item[(ii)]If $\Delta_{F_1}=0$ then systems \eqref{sisf2} have one nilpotent saddle--node equilibrium point.
   \item[(iii)]  If $\Delta_{F_1}<0$ system \eqref{sisf2} has not equilibrium points.
 \end{itemize}

\noindent\textit {Systems (iii):} Consider the Riccati quadratic polynomial differential systems
\begin{equation}\label{sisf3}
\dot{x} = x,\quad \dot{y}=y^2+(ax+b) y + cx^2+ d x + e.
\end{equation}
 The equilibrium points of systems \eqref{sisf3} are given by \eqref{singii}.
Then system \eqref{sisf3}
has $0, 1$ or  $2$ equilibrium points if  $\Delta_{F_1}$ is negative, zero or positive, respectively.
The eigenvalues of  the Jacobian matrix of system \eqref{sisf3}
evaluated at $(x_i, y_i)$ for $i=1,2$ are $1$ and $(-1)^{i}\sqrt{\Delta_{F_1}}$. Thus we have:

\begin{itemize}

\item[(i)] If  $\Delta_{F_1}>0$ systems \eqref{sisf3} have  a saddle and an unstable node.

\item[(ii)] If  $\Delta_{F_1}=0$ systems \eqref{sisf3} have  a semi--hyperbolic saddle--node.

\item[(iii)] If $\Delta_{F_1}<0$ system \eqref{sisf3} has no equilibria.
\end{itemize}

\noindent\textit {Systems (iv) and (v):}
These systems are  chordal quadratic systems, or  quadratic system without
finite singularities.

\end{proof}

\section{Infinite equilibrium points}\label{s3}

For a complete description of the Poincar\'{e} compactification method we refer to chapter 5 of \cite{DLA}.
In what follows we remember some formulas.\\

Consider  a polynomial differential system  in $\mathbb{R}^2$ with degree 2.
\begin{equation}
\dot{x}= P(x,y), \quad \dot{y}= Q(x,y)
\end{equation}
or equivalently its associated polynomial vector field $X=(P,Q)$.
As we said before, any polynomial differential system can be
extended to an analytic differential system on a closed
disk of radius one centered at their origin of coordinates, whose interior is diffeomorphic to $\R^2$ and
its boundary, the circle $\s^1$, plays the role of
the infinity.  \\
We consider 4 open charts covering the disk  $\D$:

$$\phi_1: \R^2  \longrightarrow U_1,\quad   \phi_1(x,y)=(1/v, u/v) ,$$
$$\phi_2: \R^2  \longrightarrow U_2,\quad   \phi_1(x,y)=(u/v, 1/v) $$
and
$$\psi_k: \R^2  \longrightarrow V_k, \quad  \psi_k(x,y)=-\phi_k(x,y), \quad k=1,2$$
with
$$U_1=\{(u,v)\in \D: u^2+v^2\leq 1 \quad \mbox{and}\quad  u> 0\},$$
$$U_2=\{(u,v)\in \D: u^2+v^2\leq 1 \quad \mbox{and}\quad  v> 0\},$$
$$V_1=\{(u,v)\in \D: u^2+v^2\leq 1 \quad \mbox{and}\quad  u<0\},$$
$$V_2=\{(u,v)\in \D: u^2+v^2\leq 1 \quad \mbox{and}\quad  v< 0\}.$$

The Poincar\'{e} compactification is denoted by $p(X)$.
The expression of $p(X)$  in the chart $U_1$ is
\begin{equation}\label{u1}
\dot{u}=v^2(-uP+Q),\quad \dot{v}=-v^3P,
\end{equation}
where  $P$ and $Q$ are
evaluated at $(1/v,u/v)$.% Moreover in all these local charts the points $(u,v)$ of the infinity have at $v=0$.\\

The expression of $p(X)$  in the chart $U_2$ is
\begin{equation}\label{u2}
\dot{u}=v^2(P-uQ),\quad \dot{v}=-v^3Q,
\end{equation}
where  $P$ and $Q$ are
evaluated at $(u/v,1/v)$. Moreover in all these local charts the points $(u,v)$ of the infinity have its coordinate $v=0$.\\

The expression for the extend differential system in the local chart $V_i$,
$i=1,2$ is the same as in $U_i$ multiplied by $-1$.

\begin{proposition}\label{p2}
On the circle of the infinity, for any systems of Proposition \ref{p0} the origin of $U_2$, denoted by $n$,
is an attracting node and the origin of $V_2$, denoted by $s$, is a repelling node of the Riccati quadratic
 polynomial differential system \eqref{eq1}. Moreover, the remaining infinite equilibrium
 points are described below.
\begin{itemize}
\item[(a)] For systems (i), (ii) and (v) three situations can occur.
\begin{itemize}
\item $4$ equilibrium points being $2$ saddles, $1$ attracting node  and $1$ repelling node;
\item $2$ equilibrium points being $2$ saddle-nodes;
\item The only equilibria are $n$ and $s$.
\end{itemize}
\item [(b)] For systems (iii) three situations can occur.
\begin{itemize}
\item $4$ equilibrium points being $4$ nilpotent saddle-nodes;
\item $2$ equilibrium points being $2$  semi--hyperbolic saddle-nodes;
\item The only equilibria are $n$ and $s$.
\end{itemize}
\item [(c)] For systems  (iv) three situations can occur.
\begin{itemize}
\item $4$ equilibrium points being $2$ semi-hyperbolic saddles, $1$ semi-hyperbolic attracting node  and $1$ semi-hyperbolic repelling node;
\item $2$ equilibrium points being $2$ semi-hyperbolic saddle-nodes;
\item The only equilibria are $n$ and $s$.
\end{itemize}
\end{itemize}
\end{proposition}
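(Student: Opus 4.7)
My approach is to compute the Poincar\'e compactification of each normal form (i)--(v) in the charts $U_1$ and $U_2$ (with $V_1,V_2$ obtained by the sign reversal $p(X)|_{V_i}=-p(X)|_{U_i}$), locate the equilibria on the line of infinity $\{v=0\}$, and classify them using the local theorems in Chapter 2 of \cite{DLA}.

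First I would establish the universal claim about the origins of $U_2$ and $V_2$. Substituting $(x,y)=(u/v,1/v)$ into each of (i)--(v) and applying \eqref{u2}, the $y^2$ term of the Riccati factor produces $v^2Q(u/v,1/v)=1+au+bv+cu^2+duv+ev^2$ in every case, so $\dot v=-v+\text{h.o.t.}$ and $\dot u=-u+\text{h.o.t.}$; hence the Jacobian at the origin equals $-I$, so $n$ is an attracting star, and consequently $s$ is a repelling node in $V_2$.

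Next I would analyze the chart $U_1$ (with $V_1$ following by the sign reversal). Substituting $(x,y)=(1/v,u/v)$ and applying \eqref{u1}, the equation $\dot v=-v^3P(1/v,u/v)$ evaluates to $-v-v^2,\;-v,\;-v^2,\;-v^3,\;-v-v^3$ for (i)--(v) respectively; in every case $\{v=0\}$ is invariant, and the equilibria on it satisfy $u^2+(a-1)u+c=0$ for systems (i), (ii), (v) (discriminant $\Delta_{I_1}$) and $u^2+au+c=0$ for systems (iii), (iv) (discriminant $\Delta_{I_2}$). Thus the three-way split in each item of the proposition corresponds exactly to the sign of the relevant discriminant, yielding $4$, $2$, or $0$ additional equilibria on $\s^1$.

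To classify each equilibrium $(u_0,0)$ I linearize: the Jacobian is upper triangular with $(u,u)$-entry $\pm\sqrt{\Delta_{I_j}}$ and $(v,v)$-entry $\partial\dot v/\partial v|_{(u_0,0)}$, which is $-1$ for (i), (ii), (v) and $0$ for (iii), (iv). In the first group both eigenvalues are non-zero when $\Delta_{I_1}>0$, giving a hyperbolic saddle and a stable node in $U_1$ (and their symmetric images in $V_1$), and when $\Delta_{I_1}=0$ the coalesced equilibrium is semi-hyperbolic; Theorem 2.19 of \cite{DLA} then identifies it as a saddle--node. For families (iii) and (iv) the $v$-eigenvalue vanishes, so the equilibrium is semi-hyperbolic when $\Delta_{I_2}>0$ and nilpotent when $\Delta_{I_2}=0$; I would then read off the center-manifold dynamics from the leading order of $\dot v$ at $(u_0,0)$: order $2$ with negative coefficient for (iii) gives a saddle--node on the centre manifold, whereas order $3$ with negative coefficient for (iv) gives a stable centre manifold, which combined with the hyperbolic $\pm\sqrt{\Delta_{I_2}}$ direction yields the topological saddles and topological nodes listed in (c). The main obstacle will be this last step: reliably extracting the centre-manifold expansion at the non-hyperbolic equilibria, both in families (iii) and (iv) and in the degenerate cases $\Delta_{I_j}=0$, and matching it to the saddle/node/saddle--node types prescribed by the normal-form theorems, while keeping the sign reversal between $U_i$ and $V_i$ straight so that attracting/repelling pairs come out as claimed.
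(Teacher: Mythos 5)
Your plan follows essentially the same route as the paper: compute $p(X)$ in the charts $U_1$ and $U_2$ for each normal form (i)--(v), observe that the infinite equilibria other than $n,s$ are the roots of $u^2+(a-1)u+c$ (resp.\ $u^2+au+c$), and classify them by the sign of $\Delta_{I_1}$ (resp.\ $\Delta_{I_2}$) together with the $v$-eigenvalue $-1$ or $0$; your computed expressions for $\dot v$ and for the Jacobians agree with the paper's. If anything you are more careful than the paper at the non-hyperbolic points (your semi-hyperbolic/nilpotent bookkeeping for families (iii) and (iv) is the technically correct one, and the explicit centre-manifold reading of $\dot v=-v^2$ versus $\dot v=-v^3$ is exactly what justifies the saddle-node versus saddle/node dichotomy that the paper merely asserts).
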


\begin{proof}
\noindent\textit {Systems (i):}
First we analyze the phase portrait in the local  chart $U_1$.
The expression of the system  in this chart is
 \begin{equation} \label{sisi1}
\dot{u} =v ((b-1)u + d) + e v^2+ p(u),  \quad  \dot{v}=-(v + v^2),
\end{equation}
where $p(u)= u^2+(a-1) u+c$.

Note that $(u_0,0)$ is an infinite equilibrium point of  \eqref{sisi1} if, and only if, $p(u_0)=0.$
System \eqref {sisi1} has $0,1$ or $2$ two infinite equilibrium points:
\begin{equation}\label {Si} S_i=\left(\dfrac{1-a +(-1)^i\sqrt{\Delta_{I_1}}}{2 },0\right),\end{equation}
for $i=1,2$, where $\Delta_{I_1}$ is given \eqref{deltas}.

The eigenvalues of the Jacobian matrix of  system \eqref{sisi1} are $-1$ and $(-1)^{i}\sqrt{\Delta_{I_1}}$. Thus we have:

\begin{itemize}
\item[(i)] If  $\Delta_{I_1}>0$  systems \eqref {sisi1} have  a saddle and a stable node.

\item[(ii)] If  $\Delta_{I_1}=0$  systems \eqref {sisi1} have  a semi--hiperbolic saddle--node.

\item[(iii)] If $\Delta_{I_1}<0$  systems \eqref {sisi1} have no equilibrium points.
\end{itemize}

Now we analyze the phase portrait in the local  chart $U_2$,
we need to the study the origin of $U_2$, the others infinite
singularity ahead, have been studied in the local chart $U_1$.
The expression of the system  in this chart is
\begin{equation} \label{sisi11}
\begin{array}{lcl}\dot{u} =v( v ue- u (d u+b-1))+q(u),  \\ \dot{v}=-v \left(1+a u+cu^2\right)-v^2 (b+d u)-e v^3,\\
\end{array}
\end{equation}
where $q(u)=-u(1+ (a-1)u+c u^2).$

The eigenvalues of the Jacobian matrix at the origin of $U_2$ of system  \eqref{sisi11} are  $-1$ and $-1 $. Therefore system
\eqref {sisi11} has a stable node at $(0,0)$.

Thus, the equilibrium points of system \eqref{eq1}, system $(i)$, on the circle
$\sss^1$ are classified as follows.

\begin{figure}[!htb]
    \begin{overpic}[width=4cm]{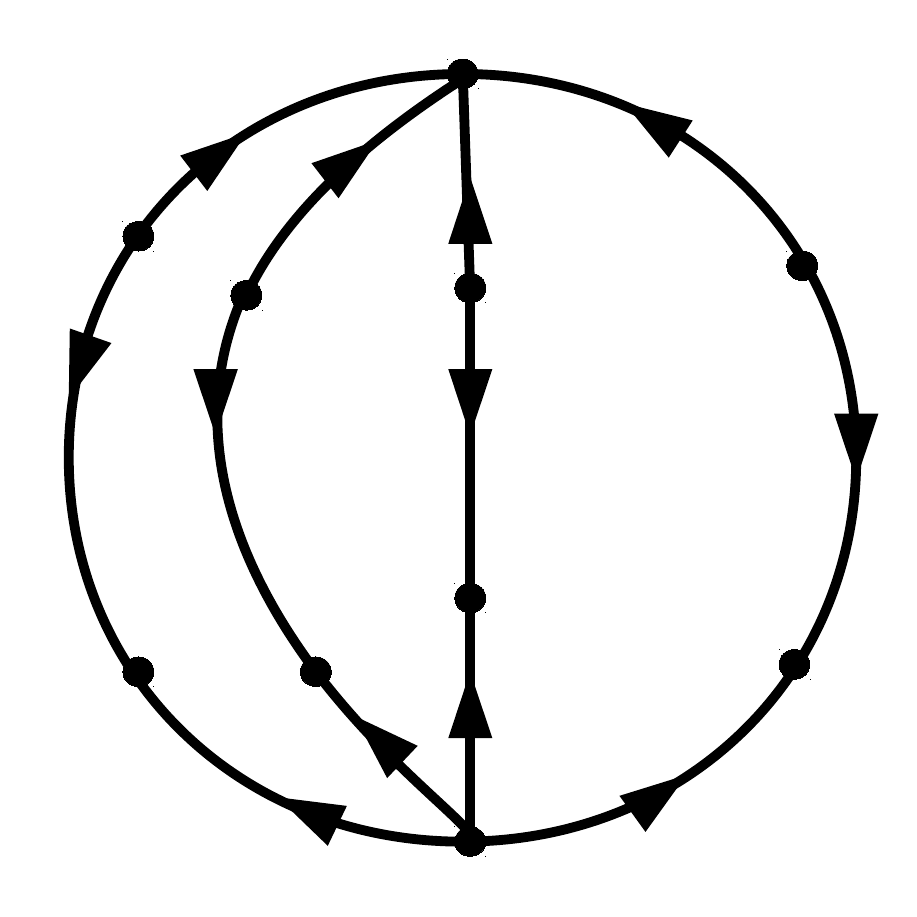}
        %\begin{overpic}[grid,tics=10,width=4cm]{inicial}
        \put(55,70){$q_1$}\put(30,60){$p_1$}\put(50,95){$n$}\put(90,70){$u_1$}
        \put(90,25){$u_2$}\put(50,0){$s$}\put(5,20){$v_1$}\put(5,75){$v_2$}\put(55,30){$q_2$}\put(35,30){$p_2$}
    \end{overpic}
\label{figinicial}\caption{Finite and infinite equilibrium of system \eqref{eq1}, systems (i).}
\end{figure}

\begin{itemize}
\item [(a)] If $\Delta_{I_1}> 0$ system \eqref{eq1} has $6$ equilibrium points.
\begin{itemize}
\item $2$ saddles: $u_1$  and $v_1$ diametrically opposed to $u_1$;
\item $2$ attracting nodes: $u_2$ and $n$ the origin of $U_2$);
\item $2$ repelling nodes: $v_2$ diametrically opposed to $u_2$
and $s$  the origin of $V_2$ diametrically opposed to $n$.
\end{itemize}
\item[(b)]If $\Delta_{I_1}= 0$ system \eqref{eq1} has $4$ equilibrium points.
\begin{itemize}
\item $2$ saddle-node: $u_{12}$ and $v_{12}$ (diametrically opposed to $u_{12}$;
\item $1$ attracting node: $n$;
\item $1$ repelling node: $s$.
\end{itemize}
\item[(c)] If $\Delta_{I_1}<0$ system \eqref{eq1} has $2$ equilibrium points.
\begin{itemize}
\item $1$ attracting node: $n$;
\item $1$ repelling node: $s$.
\end{itemize}
\end{itemize}

\noindent\textit {Systems (ii):}
The expression of the system  in the local chart $U_1$ is
\begin{equation} \label{sisi2}
\dot{u} =v ((d + b u) + e v)+ p(u),  \quad \dot{v}=-v ,\end{equation}
where $p(u)= u^2+(a-1) u+c$, and in the local chart $U_2$ is

\begin{equation} \label{sisi22}
\begin{array}{lcl}\dot{u} =-v( u (d+b u)+ v u e)+q(u),  \\ \dot{v}=-v \left(c+d u+c u^2\right)-v^2 (b+du)- e v^3,\\
\end{array}
\end{equation}
where $q(u)=-u(1+(a-1) u+c u^2).$
The equilibrium  point at infinity and their classification are exactly the same of system (i).

\noindent\textit {Systems (iii):}
The expression of this system in the local chart $U_1$ is
\begin{equation} \label{sisi3}
\begin{array}{lcl}\dot{u} = v (u (b-1)+d+ev)+p(u),  \\ \dot{v}=-v^2,\\
\end{array}
\end{equation}
where $p(u)= u^2+a u+c$. System \eqref{sisi3} has $0,1$ or $2$ equilibrium points.
 $$S_i=\left(\dfrac{-a+(-1)^i\sqrt{\Delta_{I_2}}}{2 },0\right)$$
for $i=1,2$, where $\Delta_{I_2}$ is given by \eqref{deltas}. The eigenvalues of the Jacobian
matrix of system \eqref{sisi3} are $0$ and $(-1)^{i}\sqrt{\Delta_{I_2}}$. Thus we have:
\begin{itemize}
\item[(i)] If  $\Delta_{I_2}>0$ systems \eqref{sisi3} have two nilpotent saddle--nodes.
\item[(ii)] If  $\Delta_{I_2}=0$ systems \eqref{sisi3} have a saddle--node with both eigenvalues being zero.
\item[(iii)] If $\Delta_{I_2}<0$ systems \eqref{sisi3} have no equilibrium points.\end{itemize}

The expression of the system in the local chart $U_2$ is
\begin{equation} \label{sisi33}
\begin{array}{lcl}\dot{u} =v( -v (e u)- u (-1+b+d u))+q(u),  \\ \dot{v}=-v \left(1+au+cu^2\right)-v^2 (b+du)-e v^3,\\
\end{array}
\end{equation}
where $q(u)=-u(1+(a-1)u+cu^2).$
The equilibrium  points at infinity and their classification are exactly the same of systems (i).\\

In summay, the equilibrium points of system \eqref{eq1}, system $(iii)$, on the circle
$\sss^1$ are classified as follows.

\begin{itemize}
\item [(a)] If $\Delta_{I_2}> 0$ system \eqref{eq1} has $6$ equilibrium points.
\begin{itemize}
\item $4$ saddle-nodes: $u_1$ , $v_1$ diametrically opposed to $u_1$,
$u_2$ and  $v_2$ diametrically opposed to $u_2$;
\item $1$ attracting node:  $n$ ;
\item $1$ repelling node:  $s$ diametrically opposed to $n$.
\end{itemize}
\item[(b)]If $\Delta_{I_2}= 0$ system \eqref{eq1} has $4$ equilibrium points.
\begin{itemize}
\item $2$ saddle-node: $u_{12}$  and $v_{12}$
diametrically opposed to $u_{12}$;
\item $1$ attracting node: $n$;
\item $1$ repelling node: $s$.
\end{itemize}
\item[(c)] If $\Delta_{I_2}<0$ system \eqref{eq1} has $2$ equilibrium points.
\begin{itemize}
\item $1$ attracting node: $n$;
\item $1$ repelling node: $s$.
\end{itemize}
\end{itemize}

\noindent\textit {Systems (iv):}
The expression of the system in the local chart $U_1$ is
\begin{equation} \label{sisi4}
\dot{u} =v(d+ b u + (e - u) v)+ p(u),  \quad \dot{v}=-v^3,
\end{equation}
where $p(u)=u^2+a u+ c.$ System \eqref{sisi4} has $0,1$ or $2$ equilibrium points.

$$S_i=\left(\dfrac{-a +(-1)^i\sqrt{\Delta_{I_2}}}{2},0\right)$$
for $i=1,2$, where $\Delta_{I_2}$  is given by \eqref{deltas}. The eigenvalues of the Jacobian matrix of system \eqref{sisi4}
 are $0$ and $(-1)^{i}\sqrt{\Delta_{I_2}}$. Thus we have:
\begin{itemize}
\item[(i)] If  $\Delta_{I_2}>0$  systems \eqref{sisi4} have a semi-hyperbolic stable node and a semi-hyperbolic  saddle.
\item[(ii)] If  $\Delta_{I_2}=0$  systems \eqref{sisi4} have a semi-hyperbolic saddle-node.
\item[(iii)] If $\Delta_{I_2}<0$  systems \eqref{sisi4} have no equilibrium points.
\end{itemize}

The expression of the system  in the local chart $U_2$ is
\begin{equation} \label{sisi44}
\begin{array}{lcl}\dot{u} =v( v (1-e u)- u (b+d u))+q(u),  \\ \dot{v}=-v \left(1+a u+cu^2\right)-v^2 (b+d u)-e v^3,\\
\end{array}
\end{equation}
where $q(u)=-u(1+(a-1) u+c u^2).$
The equilibrium  points at infinity and their classification are exactly the same of systems (i).\\
In short, the equilibrium points of system \eqref{eq1}, systems $(iv)$, on the circle
$\sss^1$ are classified as follows.

\begin{itemize}
\item [(a)] If $\Delta_{I_2}> 0$ system \eqref{eq1} has $6$ equilibrium points.
\begin{itemize}
\item $2$ semi-hyperbolic saddles: $u_1$ and $v_1$ diametrically opposed to $u_1$;
\item $1$ semi-hyperbolic attracting node: $u_2$;
\item $1$ attracting node:  $n$ ;
\item $1$ semi-hyperbolic repelling node: $v_2$ diametrically opposed to $u_2$;
\item $1$ repelling node: $s$ diametrically opposed to $n$.
\end{itemize}
\item[(b)]If $\Delta_{I_2}= 0$ system \eqref{eq1} has $4$ equilibrium points.
\begin{itemize}
\item $2$ semi-hyperbolic  saddle-nodes: $u_{12}$  and $v_{12}$ diametrically opposed to $u_{12}$;
\item $1$ attracting node:  $n$ ;
\item $1$repelling node:  $s$ diametrically opposed to $n$.
\end{itemize}
\item[(c)] If $\Delta_{I_2}<0$ system \eqref{eq1} has $2$ equilibrium points.
\begin{itemize}
\item $1$ attracting node: $n$;
\item $1$ repelling node: $s$.
\end{itemize}
\end{itemize}

\noindent\textit {Systems (v):}
The expression of the system in the local chart $ U_1$ is
\begin{equation} \label{sisi5}
\begin{array}{lcl}\dot{u}=v((d+b u)+v (e- u))+ p(u)  \\ \dot{v}=-(v + v^3).\\
\end{array}
\end{equation}
where $p(u)= u^2+(a-1) u+c.$ The equilibrium  points at infinity and their classification are exactly the same than of systems (i).

The expression of the system in the local chart $U_2$ is
\begin{equation} \label{sisi55}
\begin{array}{lcl}\dot{u} =v( v (1-e u)- u (b+d u))+q(u),  \\ \dot{v}=-v \left(1+a u+cu^2\right)-v^2 (b+d u)-e v^3,\\
\end{array}
\end{equation}
where $q(u)=-u(1+(a-1) u+c u^2-u).$
The origin and its classification is exactly the same than of systems (i).

\end{proof}

\section{Proof  of Theorem \ref{mainteo}} \label{s4}

We start this section considering the  Tables $1,...,5,$  one for each of the
possible Riccati systems.  In each table, we list the conditions  about the parameters and indicate the
possible phase portraits.

 \subsection{Proof of Theorem \ref{mainteo}}
We remember the notation introduced in previous sections
  \[\Delta_{F_1} =b^2-4 e, \quad
\Delta_{F_2} =(b-a)^2-4  (c-d+e),\]\[ \Delta_{I_1}= (a-1)^2-4 c\quad \mbox{ and} \quad \Delta_{I_2}= a^2-4  c.\]

 \subsubsection{Proof of Theorem \ref{mainteo} -- System (i)} We begin the proof considering the assumptions of
the first row of Table \ref{t6}.
These systems have 4 finite equilibrium $p_1, p_2, q_1$, $q_2$
and 6 infinite equilibrium $ n, s, u_1, u_2, v_1, v_ 2$, according
to sections \ref{s2} and \ref{s3}, see Figure \eqref{figinicial}.

Let $r_1$  be the straight line joining $v_1, p_1 $ and $u_1$, and $r_2$ be the straight line joining $v_1, q_2 $ and $u_1$:
\[ r_1= y-u_1x-k_1=0,\quad  r_2=  y-u_1x-k_2=0, \]
where  \[k_1=  \frac{1}{2}(1-b+\sqrt{(a-1)^2-4c}+\sqrt{(a-b)^2-4(c-d+e)})\]
and \[k_2=  \frac{1}{2}(-b-\sqrt{b^2-4c}).\]

We analyze the position of $q_1$ with respect to $r_1$
and the position of $p_2$ with respect to $r_ 2$. We have four possibilities.

\begin{figure}[!htb]
    \begin{minipage}[t]{2.7cm}\psfrag{a}{$a$}\centering\includegraphics[scale=.31]{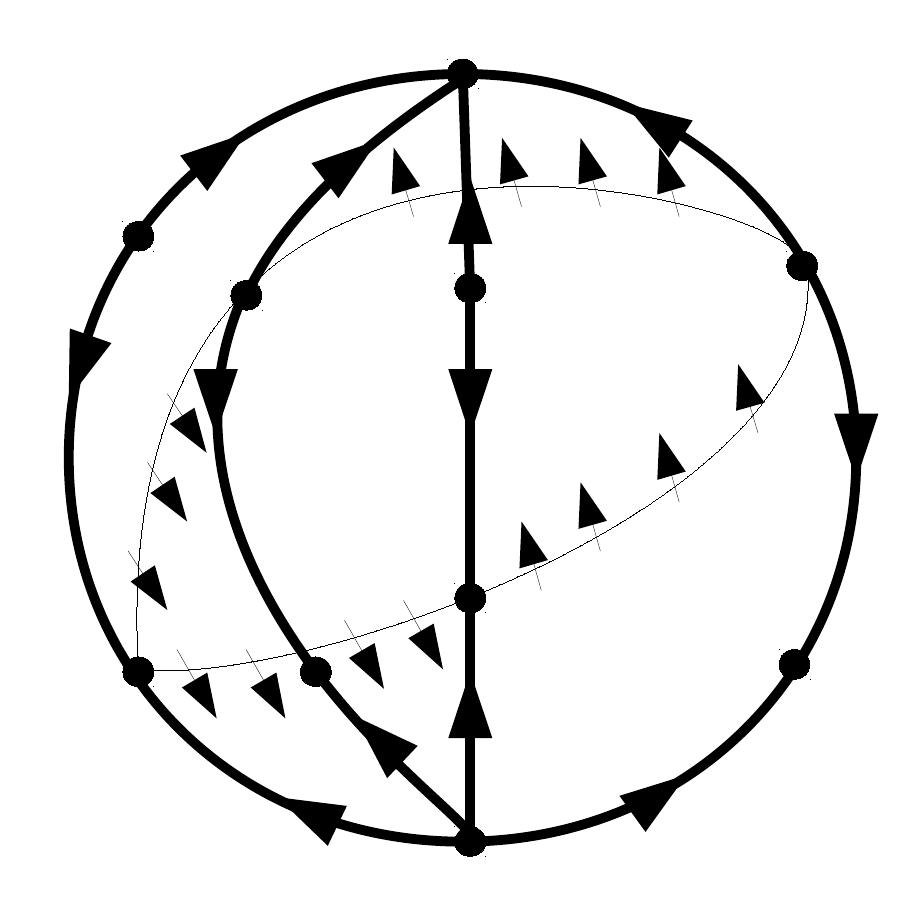}\end{minipage}
    \begin{minipage}[t]{2.7cm}\psfrag{b}{$b$}\centering\includegraphics[scale=.31]{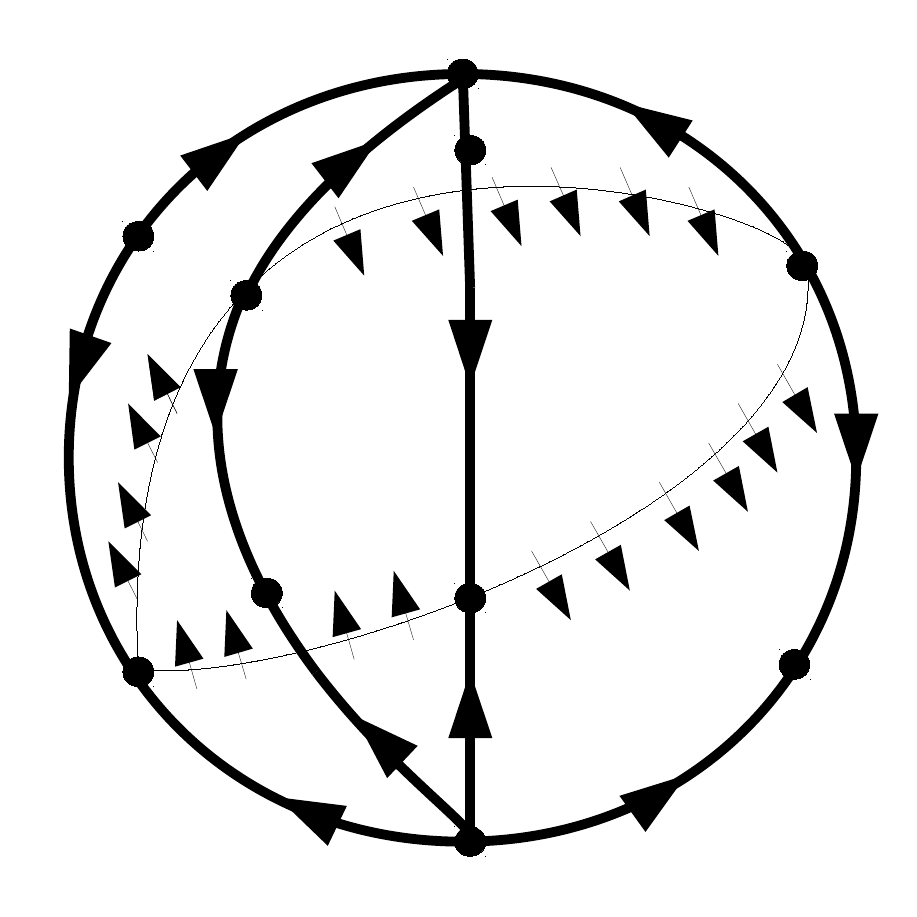}\end{minipage}
    \begin{minipage}[t]{2.7cm}\psfrag{c}{$c$}\centering\includegraphics[scale=.31]{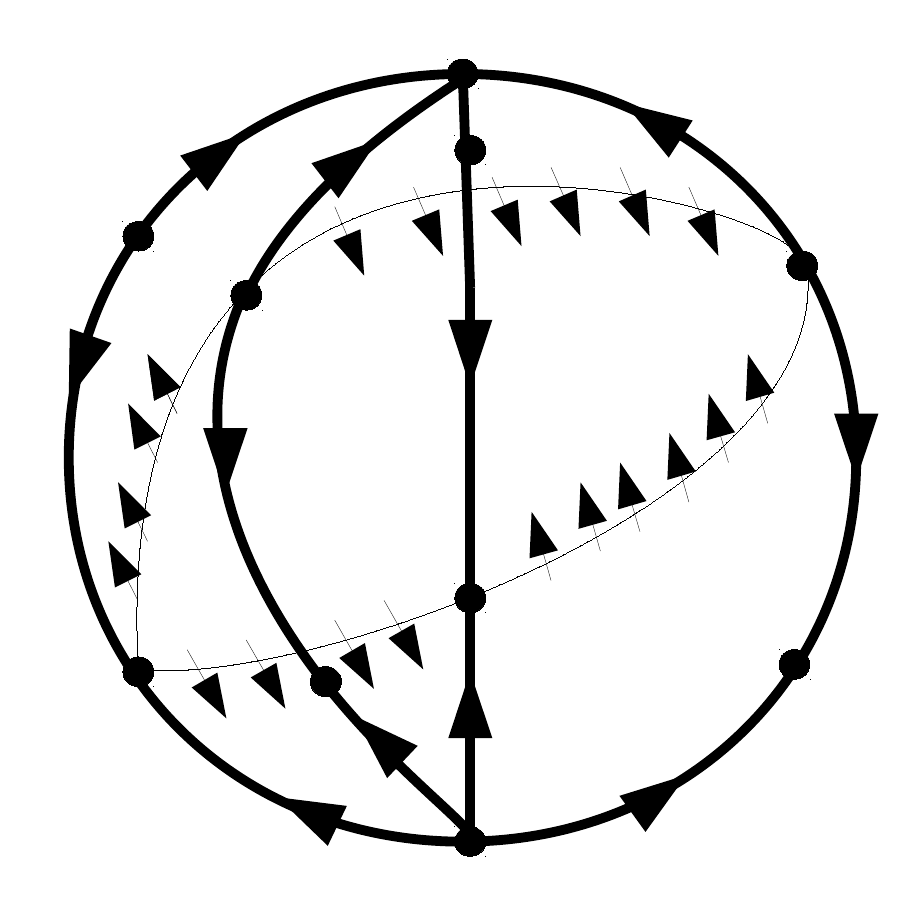}\end{minipage}
    \begin{minipage}[t]{2.7cm}\psfrag{d}{$d$}\centering\includegraphics[scale=.31]{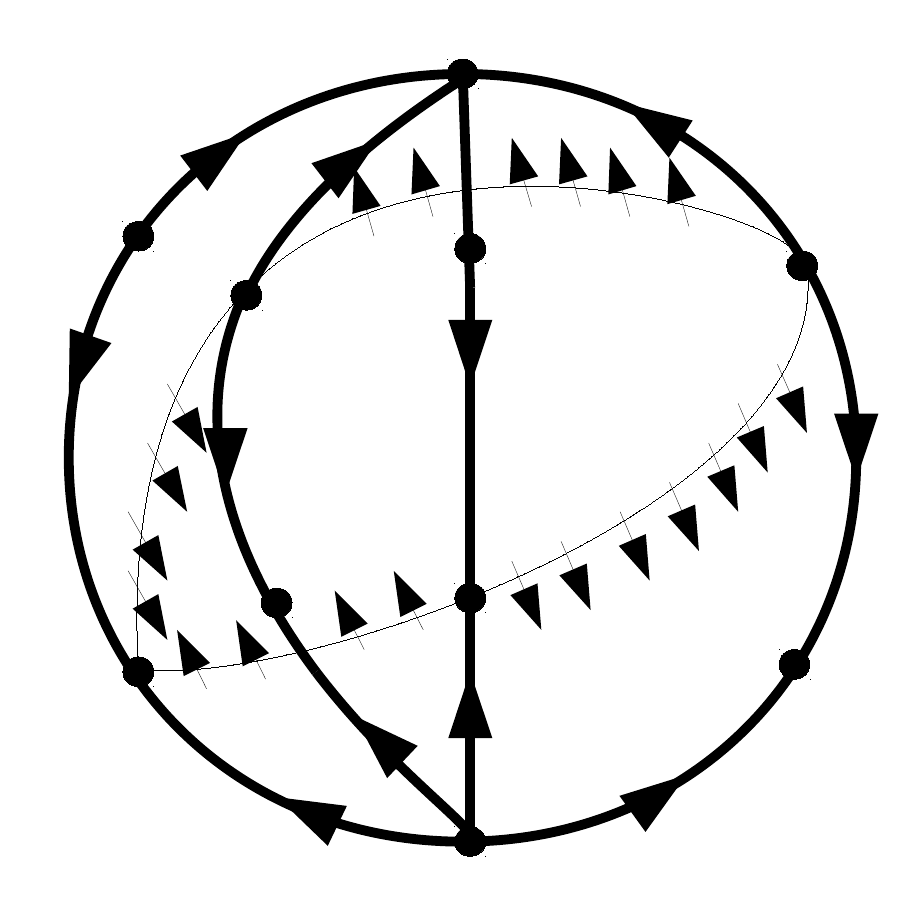}\end{minipage}
    \caption{\small Straight lines $r_1$ and $r_2$ and the directions of the vector field $X(x,y)=(\a_2(x),ky^2+\b_1(x) y + \g_2(x))$.} \label{figura2}
\end{figure}

Assume the first possibility. By Lemma \ref{L2} (see Appendix) the vector field $X(x,y)=(\a_2(x),ky^2+\b_1(x) y + \g_2(x))$ has only the equilibrium $p_1$
as a contact point with $r_1$, and the equilibrium $q_2$ as  a contact point
with $r_2$. Thus $p_1$ divides $r_1$ into two
semi-straight lines and we have the direction of the field downward
between $v_2$ and $p_1$ and upward between $p_1$ and $u_1$.
In fact this is due to the fact that  the repelling node is below the line
$r_1$, and there is a trajectory with $\alpha$-limit $q_1$ and $\omega$-limit $n$.
Similarly we concluded that  $q_2$ divides $r_2$ into two
semi-straight lines and we have the direction of the vector field downward
between $v_2$ and $q_2$ and upward between $q_2$ and $u_1$. Thus the only way to complete the phase portrait is shown in figure $P1.$\\

In the second case and in an analogous way, we conclude that the phase portrait is shown in figure $P2.$\
The third case does not occur, because the conditions $ r_1(q_1)>0$ and $ r_2(p_2)<0$ will never be satisfied at the same time.
In the fourth case we concluded that $p_1$ divides $r_1$ into two
semi-straight lines and  the direction of the field is downward
between $v_2$ and $p_1$ and upward between $p_1$ and $u_1$. Moreover
$q_2$ divides $r_2$ into two semi-straight lines and  the direction
of the field is upward between $v_2$ and $q_2$ and downward between
$q_2$ and $u_1$. There are three possibilities  to complete the phase portrait.
To analyze this case we consider the straight line $S: y=m x+ n$  joining  $p_1$ and $q_2$.  The coeficients
are
 \[ m = \frac{\pi_2(q_2)- \pi_2(p_1)}{\pi_1(q_2)- \pi_1(p_1)} = (-a - \sqrt{\Delta_{F_1}}- \sqrt{\Delta_{F_2}} )/2  \quad \mbox{and}\quad
 n = (-b - \sqrt{\Delta_{I_1}})/2\]
 where $\pi_1(x,y)= x$ and $\pi_2(x,y)= y$. We analyze how the straight line $S$ reaches the infinite. If
$ -a - \sqrt{\Delta_{F_1}}- \sqrt{\Delta_{F_2}} < 1-a- \sqrt{\Delta_{I_1}}$, then
$ u_2$ is above $S$, and the only possibility to complete the phase portrait is shown in figure $P3.$
If $ -a - \sqrt{\Delta_{F_1}}- \sqrt{\Delta_{F_2}} > 1-a- \sqrt{\Delta_{I_1}}$
then $ u_2 $ is below $S,$ and the phase portrait is shown in figure $P5.$
Finally, if $ -a - \sqrt{\Delta_{F_1}}- \sqrt{\Delta_{F_2}} = 1-a- \sqrt{\Delta_{I_1}}$
then $ u_2 $ belong to $S,$ the  phase portrait is shown in figure $P4.$\\
Now we explicit the parameter values for each phase portrait.
\begin{itemize}

\item   $P_1$: $(a, b, c, d, e) = (0,0,0,3.75, -0.25)$.
\item   $P_2$: $(a, b, c, d, e) = (0,0,0,-3.75, -4)$.
\item   $P_3$: $(a, b, c, d, e) =(0,0,-0,75,-0.75,-0.25)$.
\item   $P_4$: $(a, b, c, d, e) = (0,0,-2,-2, -0.25)$.
\item   $P_5$: $(a, b, c, d, e) =(0,0,-3.75,-3.75,-0.25)$.
\end{itemize}

Assume the conditions in the second row of Table \ref{t6}. systems (i) have 3 finite equilbria $p_{1,2}, q_1$, $q_2$
and 6 infinite equilibria $ n, s, u_1, u_2, v_1, v_ 2$. Note that $p_{1,2}$ comes from the collision of $p_1$ and $ p_2$
(these equilibria exist when we assume the conditions of the first row of Table \ref{t6}) when $\Delta_{F_2}\rightarrow0$.
Consequently systems (i) have at most five phase portraits which are obtained from the 5 possible phase portraits
of row $1$ of Table \ref{t6}. Applying Lemma 7, we can see that effectively only the 4 phase portraits
listed in row 2 of Table \ref{t6} occur. Next we explicit the parameter values for each phase portrait.

\begin{itemize}

\item  $P_6$: $(a, b, c, d, e) = (0,0,0,-3, -3)$.
\item  $P_7$: $(a, b, c, d, e) = (0,0,-1,-2, -1)$.
\item  $P_9$: $(a, b, c, d, e) =(0,0,-29,-30,-1)$.
\item  $P_8$: We cannot explicit a choice of $(a, b, c, d, e)$. However its
existence follows from continuity when we pass from  the phase portraits $P_7$ to $P_9$.

\end{itemize}

The analysis of the phase portraits for the conditions listed in the other rows of  Table \ref{t6}
is analogous to the one that we did above.  We will only give an example for each phase portrait.

\begin{itemize}

\item   $P_{10}$: $(a, b, c, d, e) = (0,0,0,-1, -0.25)$.
\item  $P_{11}$: $(a, b, c, d, e) = (0,0,-1,10, 0)$.
\item  $P_{12}$: $(a, b, c, d, e) =(2,0,-1,-1,0)$.
\item  $P_{13}$: $(a, b, c, d, e) =(1,0,-1,-1,0)$.
\item  $P_{14}$: $(a, b, c, d, e) =(-2,0,-1,-1,0)$.
\item  $P_{15}$: $(a, b, c, d, e) =(4,0,2,-2,0)$.
\item  $P_{16}$: $(a, b, c, d, e) =(1,0,-1,-1.25,0)$.
\item  $P_{17}$: $(a, b, c, d, e) =(0,0,-1,-10,0)$.
\item  $P_{18}$: $(a, b, c, d, e) =(0,1,0,1,0.75)$.
\item  $P_{19}$: $(a, b, c, d, e) =(0,0,-0.75,0.25,1)$.
\item  $P_{20}$: $(a, b, c, d, e) =(1,0,-1,-1.25,1)$.
\item  $P_{21}$: $(a, b, c, d, e) =(1,2,0,4.75,0)$.
\item  $P_{22}$: $(a, b, c, d, e) =(2,0,0.25,-9.75,-10)$.
\item  $P_{23}$: $(a, b, c, d, e) =(2,0,0.25,-0.75,-1)$.
\item  $P_{24}$: $(a, b, c, d, e) =(0,0,0.25,-0.75,-1)$.
\item  $P_{25}$: $(a, b, c, d, e) =(1,1,0,0.2,0.2)$.
\item  $P_{26}$: $(a, b, c, d, e) =(0,0,0.25,-1.75,-1)$.
\item  $P_{27}$: $(a, b, c, d, e) =(0,0,0.25,1.25,0)$.
\item  $P_{28}$: $(a, b, c, d, e) =(1,1,0,0.25,0.25)$.
\item  $P_{29}$: $(a, b, c, d, e) =(1,0,0,-1,0)$.
\item  $P_{30}$: $(a, b, c, d, e) =(0,0,0.25,2.25,1)$.
\item  $P_{31}$: $(a, b, c, d, e) =(0,0,0.25,1.25,1)$.
\item  $P_{32}$: $(a, b, c, d, e) =(0,0,0.25,0.25,1)$.
\item  $P_{33}$: $(a, b, c, d, e) =(0,0,1.25,1.25,-1)$.
\item  $P_{34}$: $(a, b, c, d, e) =(1,1,1,1.2,0.2)$.
\item  $P_{35}$: $(a, b, c, d, e) =(1,1,2,0,0.2)$.
\item  $P_{36}$: $(a, b, c, d, e) =(1,2,1,2,1)$.
\item  $P_{37}$: $(a, b, c, d, e) =(2,0,2,1,0)$.
\item  $P_{38}$: $(a, b, c, d, e) =(1,0,1,-1,0)$.
\item  $P_{39}$: $(a, b, c, d, e) =(1,0,1,2,1)$.
\item  $P_{40}$: $(a, b, c, d, e) =(1,0,1,1.75,1)$.
\item  $P_{41}$: $(a, b, c, d, e) =(0,0,1.25,1.25,1)$.

\end{itemize}

\subsubsection{Proof of Theorem \ref{mainteo} --System (ii)}
The phase portraits listed in row 1 of Table \ref{t2} are obtained from  row 1 of Table \ref{t6}.
Note that system (ii) has only $ x = 0 $ as an invariant vertical line, which comes when
the two straight lines $x=0, x=-1$ of system (i) collide at $x=0$.
Thus we consider the phase portraits represented in the figures $P1, P2, P3, P4$ and $P5$, excluding what occurs in
the strip $ -1\leq x \leq 0$. This reduces the possible phase portraits to $P42, P43$ and $P44$
obtained from $P1, P2$ and $P3$ respectively. Note that no new configurations can be obtained from $P4$
 and $P5$ because the phase portraits are equal in the complement of the strip $ -1 \leq x \leq 0 $.
 The possibilities listed in the other rows of Table \ref{t2} are obtained in a similar way.
 Below we list values of the parameters that realize each one of the possible phase portraits.\\

 \begin{itemize}

\item   $P_{42}$: $(a, b, c, d, e) = (1,1,-1,4, -1)$.
\item  $P_{43}$: $(a, b, c, d, e) = (1,1,-1,-4, -1)$.
\item  $P_{44}$: $(a, b, c, d, e) =(1,1,-1,0,-1)$.
\item  $P_{45}$: $(a, b, c, d, e) =(1,1,-1,4,0.25)$.
\item  $P_{46}$: $(a, b, c, d, e) =(1,1,-1,0,0.25)$.
\item  $P_{47}$: $(a, b, c, d, e) =(2,1,-1,1,0.25)$.
\item  $P_{48}$: $(a, b, c, d, e) =(1,2,-1,0,2)$.
\item  $P_{49}$: $(a, b, c, d, e) =(1,1,0,4,-1)$.
\item  $P_{50}$: $(a, b, c, d, e) =(1,1,0,-2,-1)$.
\item  $P_{51}$: $(a, b, c, d, e) =(1,1,0,0,-1)$.
\item  $P_{52}$: $(a, b, c, d, e) =(1,1,0,4,0.25)$.
\item  $P_{53}$: $(a, b, c, d, e) =(2,1,0.25,1,0.25)$.
\item  $P_{54}$: $(a, b, c, d, e) =(1,1,0,-2,0.25)$.
\item  $P_{55}$: $(a, b, c, d, e) =(1,1,0,0,1)$.
\item  $P_{56}$: $(a, b, c, d, e) =(1,1,1,0,0.2)$.
\item  $P_{57}$: $(a, b, c, d, e) =(1,1,1,0,0.25)$.
\item  $P_{58}$: $(a, b, c, d, e) =(2,1,0.3,1,0.25)$.

\end{itemize}

\subsubsection{Proof of Theorem \ref{mainteo} --System (iii)} If  $\Delta_{I_2}>0$ and
$\Delta_{F_I}>0$, corresponding to the case considered in
the first row of Table \ref{t3}, systems (iii)
have 2 finite equilibria $q_1$, $q_2$
and 6 infinite equilibria $ n, s, u_1, u_2, v_1, v_ 2$, according
to sections \ref{s2} and \ref{s3}.

We consider  the straight line $r$
joining $v_2, q_1 $ and $u_2$. Applying Lemma \eqref{L2} we can prove that the following configurations cannot occur:
\begin{itemize}
\item [(a)] both unstable separatrix of $ q_2 $ have $\omega$-limit  $n$;
\item [(b)] the left hand side of unstable separatrix of $ q_2 $  has $ \omega $-limit $n$ and the
right hand side separatrix of $ q_2 $  has $ \omega $-limit  $ u_1 $;
\item [(c)] the left hand side of unstable separatrix of  $ q_2 $ has $ \omega $-limit  $v_2 $ and the
right hand side separatrix of $ q_2 $  has $ \omega $-limit  $n$;
\item [(d)] the left hand side of unstable separatrix of  $ q_2 $ has $ \omega $-limit  $v_2 $ and the
right hand side separatrix of $ q_2 $ has $ \omega $-limit  $u_1$;
\item [(e)] the left hand side of unstable separatrix of $ q_2 $ has $ \omega $-limit  $u_1$ and the
right hand side separatrix of $ q_2 $ has $\omega $-limit  $v_2$.
\end{itemize}

Taking into account this previous informative the only possible phase portraits are  $ P_{59},P_{60} $ and $ P_{60}$ remain.
The other lines of Table \ref {t3} are similarly analyzed.
Below we list the parameter values that realize each one of the possible phase portraits.\\

 \begin{itemize}

\item   $P_{59}$: $(a, b, c, d, e) = (1,2,0.2,1,0.2)$.
\item  $P_{60}$: $(a, b, c, d, e) = (1,1,0.2,2,0.2)$.
\item  $P_{61}$: $(a, b, c, d, e) =(1,1,0.2,* *,0.2)$.
\item  $P_{62}$: $(a, b, c, d, e) =(2,2,0.2,1,1)$.
\item  $P_{63}$: $(a, b, c, d, e) =(1,2,0.2,1,1)$.
\item  $P_{64}$: $(a, b, c, d, e) =(* *,2,0.2,1,1$.
\item  $P_{65}$: $(a, b, c, d, e) =(8,2,2,1,5)$.
\item  $P_{66}$: $(a, b, c, d, e) =(2,2,1,1,0.2)$.
\item  $P_{67}$: $(a, b, c, d, e) =(2,1,1,1,0.2)$.
\item  $P_{68}$: $(a, b, c, d, e) =(2,2,1,1,1)$.
\item  $P_{69}$: $(a, b, c, d, e) =(1,2,0.25,1,1)$.

\end{itemize}

\subsubsection{Proof of Theorem \ref{mainteo} --Systems (iv) and (v)} The classification given in Tables 4 and 5
follows directly from the analysis of singularities at infinity. We list a parameter value that realize each phase portrait.

\begin{itemize}

\item  $P_{70}$: $(a, b, c, d, e) = (1,1,0,0,0)$.
\item  $P_{71}$: $(a, b, c, d, e) =(1,1,0,0,1)$.
\item  $P_{72}$: $(a, b, c, d, e) = (1,1,0,0,0)$.
\item  $P_{73}$: $(a, b, c, d, e) =(1,1,0,0,1)$.
\item  $P_{74}$: $(a, b, c, d, e) =(1,1,0,0,-1)$.

\end{itemize}

\section{Appendix: Semi-hyperbolic equilibrium points}\label{a1}

The following two lemmas are very useful in the proofs and they proved in Chapter 11 of \cite {Ye}.

\begin{lemma}\label{L1}
If the straight line passing through two singular points $S_1$ and
$S_2$ of a quadratic system is not an integral line, then it must be
formed by three open line segments without contact points $\overline
{\infty S_1}$, $\overline {S_1 S_2}$ and $\overline {S_2\infty}$.
Moreover the trajectories cross $\overline {\infty S_1}$ and
$\overline {S_2\infty}$ in one direction, and cross $\overline {S_1
S_2}$ in the opposite direction.
\end{lemma}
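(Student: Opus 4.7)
The plan is to reduce the geometric statement to an analysis of the zeros of a single one‑variable polynomial. First I would parametrize the straight line $L$ through $S_1$ and $S_2$ affinely by $t\in\mathbb{R}$, and write the equation of $L$ as $n_1 x+n_2 y+n_3=0$ with normal vector $n=(n_1,n_2)\neq 0$. By definition, a point of $L$ is a contact point of the vector field $X=(P,Q)$ with $L$ precisely when the normal component $\varphi(x,y):=n_1 P(x,y)+n_2 Q(x,y)$ vanishes there, and singular points of $X$ are automatically contact points.

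Next I would restrict $\varphi$ to $L$ using the affine parametrization; since $P$ and $Q$ are polynomials of degree at most $2$, the restriction $\tilde\varphi(t)=\varphi(L(t))$ is a polynomial in $t$ of degree at most $2$. The hypothesis that $L$ is not an integral (invariant) line says exactly that $\tilde\varphi\not\equiv 0$, for otherwise $X$ would be everywhere tangent to $L$ along $L$.

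Now the key step: both $S_1$ and $S_2$ are singular points of $X$ lying on $L$, so they give two distinct roots of $\tilde\varphi$. A nonzero polynomial of degree $\leq 2$ has at most two roots, so $\tilde\varphi$ must have degree exactly $2$ and factor as $\lambda(t-t_1)(t-t_2)$ with $\lambda\neq 0$ and $t_1\neq t_2$. Consequently $\tilde\varphi$ has no further zeros on $L$, which is exactly the statement that the three open segments $\overline{\infty S_1}$, $\overline{S_1 S_2}$, $\overline{S_2\infty}$ contain no contact points. Moreover, since both roots are simple, $\tilde\varphi$ changes sign at each of $S_1$ and $S_2$; therefore its sign is constant and equal on the two outer segments, and opposite on the middle one. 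Interpreting the sign of $\tilde\varphi=\langle X,n\rangle$ as the side into which the trajectory crosses $L$, this yields the claimed alternation of crossing directions.

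The only delicate point I foresee is the treatment of the endpoints ``at infinity'' of the segments: the line $L$ reaches the boundary $\mathbb{S}^1$ of the Poincar\'e disk at two diametrically opposite points, and one must make sure no additional contact or tangency is introduced there. This is handled by noting that the preceding argument already exhausts all contact points on the affine part of $L$ (since $\tilde\varphi$ has only two roots), and that passing to a chart of the Poincar\'e compactification merely multiplies the vector field by a nonvanishing factor along the open segments, which preserves the sign of the normal component and hence the crossing direction. No further contact points can appear, and the three‑segment description of $L$ carries over verbatim to the closure in $\D^2$.
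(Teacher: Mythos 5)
Your proof is correct and complete. Note that the paper does not prove this lemma at all: it is quoted from Chapter 11 of Ye Yanqian's \emph{Theory of limit cycles}, and the argument you give (restricting the normal component $n_1P+n_2Q$ to the line, observing it is a nonzero polynomial of degree at most $2$ vanishing at the two distinct singular points, hence having exactly those two simple roots and alternating sign across them) is precisely the standard proof of that classical fact, with the behaviour at the two ends of the line correctly handled since the segments are open.
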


\begin{lemma}\label{L2}
The straight line connecting one finite singular point and a pair of
infinite singular points in a quadratic system is either formed by
trajectories or it is a line with exactly one contact point. This
contact point is the finite singular point. For the latter case the
flow goes in different directions on each half--line.
\end{lemma}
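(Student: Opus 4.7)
The plan is to reduce Lemma \ref{L2} to a degree count on the \emph{contact polynomial} of the line. Let $X=(P,Q)$ be a polynomial vector field in $\R^2$ with $\deg P,\deg Q\le 2$, let $p_0=(x_0,y_0)$ be a finite singular point, and let $L$ be the straight line through $p_0$ with direction $\mathbf{d}=(d_1,d_2)\neq 0$. Parametrize $L$ by $t\mapsto p_0+t\mathbf{d}$, and define the contact function
\[
h(t)\;=\;\langle\mathbf{n},X(p_0+t\mathbf{d})\rangle\;=\;-d_2\,P(p_0+t\mathbf{d})+d_1\,Q(p_0+t\mathbf{d}),
\]
where $\mathbf{n}=(-d_2,d_1)$ is normal to $L$. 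Points of $L$ where $h$ vanishes are either singular points of $X$ or genuine contact points (where the flow is tangent to $L$), and between consecutive zeros of $h$ the trajectories of $X$ cross $L$ in a fixed direction. So the lemma is equivalent to showing that $h$ has only the root $t=0$, with multiplicity one, or is identically zero.

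The first step is the degree drop. Since $P$ and $Q$ are polynomials of degree at most $2$, expanding $P(p_0+t\mathbf{d})$ and $Q(p_0+t\mathbf{d})$ in $t$ shows that the coefficient of $t^2$ in $h(t)$ equals
\[
-d_2\,P_2(d_1,d_2)+d_1\,Q_2(d_1,d_2),
\]
where $P_2,Q_2$ denote the degree-$2$ homogeneous parts of $P,Q$. But by the Poincar\'e compactification construction recalled in Section \ref{s3}, the infinite singular points of $X$ are precisely the directions $[d_1{:}d_2]$ for which the homogeneous cubic $d_1 Q_2(d_1,d_2)-d_2 P_2(d_1,d_2)$ vanishes. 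By hypothesis, $L$ ends at an infinite singular point (actually two, diametrically opposed), so this cubic vanishes at $(d_1,d_2)$, and the $t^2$-coefficient of $h$ is zero. Hence $\deg h\le 1$.

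The second step uses that $p_0$ is a finite singular point: $P(p_0)=Q(p_0)=0$ gives $h(0)=0$. Combined with $\deg h\le 1$, this forces
\[
h(t)=c\,t\qquad\text{for some constant }c\in\R.
\]
If $c=0$, then $h\equiv 0$, meaning $X$ is tangent to $L$ at every point; $L$ is then an invariant line, i.e.\ a union of trajectories (together with the singular points on it). If $c\neq 0$, then $h$ vanishes only at $t=0$, so $L$ has exactly one contact point, namely $p_0$, and $h$ changes sign at $t=0$. The change of sign is exactly the statement that trajectories cross $L$ in opposite directions on the two half-lines $\{t>0\}$ and $\{t<0\}$.

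I do not expect a real obstacle here: the whole content is the observation that the normal component of $X$ along $L$ is a priori a polynomial of degree $\le 2$ in $t$, but drops to degree $\le 1$ precisely because the direction of $L$ corresponds to an infinite singular point. The slightly delicate bookkeeping point is to verify that \emph{both} infinite singular points reached by $L$ (in the two opposite directions $\pm\mathbf{d}$) impose the \emph{same} cubic condition on $(d_1,d_2)$---this is automatic since the cubic $d_1 Q_2(d_1,d_2)-d_2 P_2(d_1,d_2)$ is homogeneous, hence invariant under $\mathbf{d}\mapsto -\mathbf{d}$ up to sign. Once these routine facts are recorded, the three bulleted conclusions of the lemma follow immediately.
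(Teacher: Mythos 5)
Your proof is correct. Note that the paper does not actually prove Lemma \ref{L2}: it is quoted from Chapter 11 of the book of Ye Yanqian et al.\ \cite{Ye}, so there is no in-paper argument to compare against. Your contact-polynomial computation is the standard (and essentially the only) way to establish the statement, and every step checks out: the normal component $h(t)=\langle \mathbf{n}, X(p_0+t\mathbf{d})\rangle$ is a priori of degree $\le 2$ in $t$; its leading coefficient $d_1Q_2(d_1,d_2)-d_2P_2(d_1,d_2)$ is exactly the homogeneous cubic whose zero directions are the infinite singular points of the Poincar\'e compactification (this matches the chart formulas \eqref{u1}--\eqref{u2}, where setting $v=0$ in $v^2(-uP+Q)$ after clearing the pole yields $Q_2(1,u)-uP_2(1,u)$); and the root at $t=0$ comes from $X(p_0)=0$. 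Hence $h(t)=ct$, and the dichotomy $c=0$ (invariant line, formed by trajectories and singular points) versus $c\neq 0$ (a single sign change at $p_0$, so exactly one contact point and opposite crossing directions on the two half-lines) is precisely the content of the lemma. You also correctly dispose of the only delicate point, namely that the two antipodal infinite singular points impose a single condition on the direction $\mathbf{d}$, by homogeneity of the cubic. The argument even yields a small bonus consistent with Lemma \ref{L1}: if such a line carried a second finite singular point, $h$ would have two roots and hence $c=0$, forcing the line to be invariant.
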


\end{document}